\newcommand{\zz}[1]{\mathbb{#1}}
\def\q{\quad}
\def\qq{{\qquad}}
\def\0{\mathbf{0}}
\def\eps{\varepsilon}
\def \th{\theta}
\def\rr{\rightarrow}
\def\dr{\downarrow}
\def \< {\langle}
\def \> {\rangle}
\def\ol{\overline}
\def\beqa{\begin{eqnarray}}
\def\eeqa{\end{eqnarray}}
\def\beqas{\begin{eqnarray*}}
\def\eeqas{\end{eqnarray*}}
\newtheorem{theorem}{Theorem}[section]
\newtheorem{lemma}[theorem]{Lemma}
\newtheorem{proposition}[theorem]{Proposition}
\newtheorem{prop}[theorem]{Proposition}
\newtheorem{corollary}[theorem]{Corollary}
\newtheorem{remark}[theorem]{Remark}
\numberwithin{equation}{section}
\newcommand{\hatd}[1]{{}}
\newcommand{\bd}{\begin{displaymath}}
\newcommand{\ed}{\end{displaymath}}
\newcommand{\be}{\begin{equation}}
\newcommand{\ee}{\end{equation}}
\newcommand{\bq}{\begin{eqnarray}}
\newcommand{\eq}{\end{eqnarray}}
\newcommand{\bn}{\begin{eqnarray*}}
\newcommand{\en}{\end{eqnarray*}}
\newcommand{\dl}{\delta}
\newcommand{\re}{\zz{R}}
\newcommand{\ze}{\zz{Z}}
\newcommand{\W}{\mathcal{W}}
\def\wt{\widetilde}
\newcommand{\indic}{\mathds1}
\def\n{N}				       	   
\newcommand\arcoth{\mbox{arcoth}}				       	   
\begin{document}
\title{On the Maximal Displacement of Near-critical Branching Random Walks\footnote{Research partially supported by GRF 606010 and 607013 of the HKSAR and the HKUST IAS Postdoctoral Fellowship.} }
\author{Eyal Neuman\footnote{Eyal Neuman would like to thank HKUST where part of the research was carried out.}, Xinghua Zheng}

\date{\today}

\maketitle

\abstract{We consider a branching random walk on $\mathbb{Z}$ started by $n$ particles at the origin, where each particle disperses
according to a mean-zero random walk with bounded support and reproduces with mean number of offspring
$1+\theta/n$. For $t\geq 0$, we study $M_{nt}$, the rightmost position reached by the branching random walk up to generation~$[nt]$.
Under certain moment assumptions on the branching law, we prove that $M_{nt}/\sqrt{n}$ converges weakly to the rightmost support point of the local time of the limiting super-Brownian motion. The convergence result establishes a sharp exponential decay of the tail distribution of $M_{nt}$. We also confirm that when $\th>0$, the support of the branching random walk grows in a linear speed that is identical to that of the limiting super-Brownian motion which was studied by Pinsky in \cite{pinsky95}. The rightmost position over all generations, $M:=\sup_t M_{nt}$, is also shown to converge weakly to that of the limiting super-Brownian motion, whose tail is found to decay like a Gumbel distribution when $\th<0$.}

\section{Introduction and Main Results}\label{sec-intro}
The study of extreme values of branching particle systems has attracted a considerable amount of attention during the last few decades. Early works on the tail behavior of branching Brownian motion trace back to Sawyer and Fleischman \cite{S-F78} and Lalley and Sellke \cite{Lalley-Sellke1987}.
During the same time period, the strong law of large numbers for the maxima of branching random walk was studied by Hammersley \cite{Hammersley}, Kingman \cite{Kingman}, Biggins \cite{Biggins} and Bramson \cite{Bramson78}.

The tail behavior of the maximal displacement of branching random walk was only derived recently. We classify these results into three subclasses according to the mean number of offspring, which we denote by $\mu$.  In the supercritical case ($\mu>1$),
the asymptotic tail distribution of the position of the rightmost particle was derived by Aidekon in \cite{Aidekon2013}. It was proved by Aidekon that
the maximal displacement  converges weakly to a random  shift of the Gumbel distribution
(see also 
\cite{Bachmann:2000, Hu-Shi09, AR09,Bramson-Z09,BDZ16}).

The subcritical case ($\mu<1$) was studied in \cite{NZ17}. It was proved in \cite{NZ17} that {the} tail distribution of the position of the rightmost particle decays exponentially. Moreover, the exact rate of decay was derived.

The case where the branching law is critical, that is $\mu=1$, was studied by Lalley and Shao in \cite{LS15}. Let $R_{n}$ be the rightmost position at generation $n$ of a branching random walk started by one particle at the origin. It was proved in \cite{LS15} that when the jump distribution has mean $0$, then under some moment assumptions, the  distribution of $R_{n}/\sqrt{n}$ {conditional on the branching process surviving for $n$ generations}, converges weakly to a  distribution $G$ given by
\bn
G(x)=P_{\delta_0}\big(\wt X_{1}[x,\infty)=0\,|\, {\wt X_1(-\infty,\infty) > 0}\big),
\en
where $\{\wt X_{t}\}_{t \geq 0}$ is {a} super-Brownian motion, and $P_{\delta_0}$ stands for the probability distribution of $(\wt X_t)$ with $\wt X_0=\delta_0$,  the Dirac  measure at the origin.

In this paper, we consider the near-critical case, namely, when the mean number of offspring{s} $\mu=1+\th/n$ for some $\th\in\zz{R}$.
This is a regime where phase transitions occur and interesting phenomena arise. Moreover, different from \cite{LS15}, we consider the rightmost position of the \emph{local time} process rather than the process itself. The local time process plays a critical role in some other studies, for example, the study of the \emph{susceptible-infected-recovered (SIR)} epidemic model (\cite{L-Z2010,LPZ14}).

Specifically, let $X_k(x)$ denote the number of particles at site $x$ at generation~$k$. Recall that the local time process of a spatial  particle system $X$ is given by
\bn
L_{m}(x) = \sum_{k \leq m}X_{k}(x), \q\mbox{for all } m \in\zz{Z}_{\geq 0} \mbox{ and } x\in\zz{Z}.
\en
Suppose that the branching random walk starts with $n$ particles at the origin and the mean number of offsprings  is $1+\th/n$ for some $\th\in\zz{R}$. It is well known that if each particle carries mass $1/n$, and if we rescale time by $1/n$ and space by $1/\sqrt{n}$, then as $n\rr\infty$, the measure-valued process converges weakly to a super-Brownian motion with drift $\th$; see, e.g., \cite{Perk2002}. The  rescaled process
$\big(n^{-3/2}L_{[ nt]}(\sqrt{n}x)\big)_{t\geq 0,\, x\in\zz{R}}$ also converges weakly to the local time density of the super-Brownian motion; see \cite{Lalley2009, L-Z2010}.

Note that the maxima of the support of $L_{nt}$ equals $M_{nt}$, the rightmost position reached by the branching random walk up to generation $[nt]$. The weak convergence of the branching random walk to super-Brownian motion, however, does not imply the weak convergence of $M_{nt}/\sqrt{n}$. The reason is that $M_{nt}/\sqrt{n}$ is not a continuous function of measures with respect to the topology of weak convergence (see, for example, the discussion after Theorem~3 in \cite{LS15}). Our first main result, Theorem \ref{theorem-weak-con}, confirms that $M_{nt}/\sqrt{n}$ converges weakly to~$\wt M_t$, the rightmost support point of the limiting super-Brownian up to time~$t$.

\cite{LS15} also studied  the tail distribution of the rightmost position over all generations, namely,
\be \label{max-inf}
M = \sup_{k\geq 0} M_k.
\ee
It was proved in \cite{LS15} that in the critical case and under some moment assumptions,
\begin{equation}\label{eq:LS_1d}
P\big(M\geq x\big) \sim \frac{\alpha}{x^{2}}, \textrm{ as } x \rr\infty .
\end{equation}
Here $\alpha$ is a constant that depends on the standard deviations of the jump distribution and the offspring distribution.
The asymptotics \eqref{eq:LS_1d} implies that for a critical branching random walk started with $n$ particles at the origin, the tail distribution of $M/\sqrt{n}$, that is $P\big(M\geq \sqrt{n}x\big)$, decays at  rate $O(1/x^2)$ for large values of $n$ (see Corollary 2 in \cite{LS15}). We will show in Corollary \ref{corol-u-x}(i) that the corresponding tail distribution of $M_{nt}/\sqrt{n}$ decays with a rate of $\exp(- c(t) x^2)$. The difference between the two convergence rates implies that the heavy-tail behavior of  $M$ in the critical case is due to particles that survive more than~$O(n)$ generations.

In the supercritical case,
 precise estimates on the tail distribution of the radius of the support of a super-Brownian motion
were established by Pinsky {in \cite{pinsky-pde} and~\cite{pinsky95}}.
Let $B_{r}(0)$ be the ball of radius $r$ centered at the origin. It was proved in \cite{pinsky95} (see equation~(6) therein) that for a super-Brownian motion $\wt X=\{\wt X^{}_{t}\}_{t\geq 0}$  with drift $\theta>0$, diffusion coefficient $\sigma_R^2$  and branching coefficient~$\sigma^2$, one has
\bn
P_{\dl_{0}}^{\theta} \big(\wt L^{}_{t}(B_{r}(0)^{c}) =0 \big) = e^{-u_{r}{(t,0)}},
 \en
where $P^\theta_{\dl_{0}}$ stands for the probability distribution of $\wt X^{}$ with drift $\theta$, $\wt X^{}_0=\dl_0$, and
\[
\wt L_t=\int_0^t \wt X_s\,ds
\]
 is the local time process associated with $\wt X.$
As to $u_{r}{(\cdot)},$ for any $r>0$, $u_{r}({t,x})$ is the minimal {positive} solution to the following nonlinear PDE:
 \be
 \left\{
 \begin{aligned}  \label{sing-pde}
 \frac{\partial u}{\partial t} &= \frac{\sigma_R^{2}}{2} { \frac{\partial^{2}u}{\partial x^{2} }}  +\theta   u - \sigma^2 u^{2}, \quad {t >0, \ |x|<r}, \\
 u({0,x}) &= 0 \textrm{ on } |x|<r, \\
  \lim_{|x| \rr  r} u({t,x})& = \infty.
 \end{aligned}
 \right.
 \ee
The existence of a positive solution to (\ref{sing-pde}) was derived in Theorem 1 of \cite{pinsky-pde} along with some sharp bounds on the minimal positive solution. The uniqueness of  positive solutions to \eqref{sing-pde} can be established by a similar argument to the proof of Proposition~\ref{prop-uniq} in this paper.

One important implication of Theorem 1 of \cite{pinsky-pde} is  the growth rate of the support of $\wt X$. It was proved in Theorem 1 of \cite{pinsky95} that the large time growth rate  is linear with rate $\bar \gamma := (2\th \sigma_R ^{2})^{-1/2}$. Specifically, one has
$$
\lim_{t\rr \infty }P^\theta_{\dl_{0}} \big(\wt L_{\gamma t}(B_{t}(0)^{c}) =0  \big)  =1, \quad \textrm{if } \gamma <\bar \gamma.
$$
and
$$
\lim_{t\rr \infty }P^{\theta}_{\dl_{0}} \big(\wt X_{\gamma t}(B_{t}(0)^{c}) >0 \, | \, { \zeta_{\wt X} =\infty} \big)  =1, \quad \textrm{if } \gamma >\bar \gamma,
$$
{where $\zeta_{\wt X}$ is the extinction time of $\wt X$. }
The above convergence in probability is strengthened to be almost sure convergence in \cite{Kyprianou05}.

The aforementioned growth rate result brings up the second aim of this paper, namely,  to derive the growth rate of near-critical  branching random walks. As we mentioned earlier,
results on the limiting measure-valued processes in most cases are not precise enough for the research of discrete particle systems.
The motivation for this work comes from the study of population and epidemic models, where sharp bounds on the local time are key elements in the proofs of phase transitions. For example, in \cite{LPZ14}, a phase transition for the spatial measure-valued \emph{susceptible-infected-recovered (SIR)} epidemic models was established. A key ingredient in the proof  is the growth rate of the support of the local time (see the discussion in Section~1.2 of \cite{LPZ14}).

Before we state our main results, we define more carefully the branching random walk that we study.

\textbf{The model:} For any fixed $n\in \zz{\n}$ and a constant $\eta\in\zz{R}$, $P^{\eta}_{n}$ stands for the probability distribution of  a discrete time branching random walk
$X^{}=(X^{}_k(x))_{k\geq 0, x\in \zz{Z}}$ initiated by $n$ particles at the origin and
with the following properties. In each generation, particles first jump (independently from each other) according a distribution with a finite  range, $F_{RW}= \{a_{k}\}_{k\in [-R,R]}$, which has mean $0$ and variance $\sigma_R^{2}$, and then each particle branches independently according to an offspring distribution $F^{\eta}_{B}=\{p^{\eta}_{i}\}_{i\geq 0}$, which has $p^{\eta}_{0}>0$, expectation $1+\eta$, variance $\sigma^{2}(\eta)$ and third moment $\gamma(\eta)$. The $\sigma(\eta)$ and $\gamma(\eta)$ satisfy that {for some $\dl>0$,

\be \label{assum-sig-gam}
\lim_{\eta \dr 0} \sigma(\eta) = \sigma>0, \quad \sup_{\eta \in (0,\dl) }\gamma(\eta) < \infty.
\ee
We remark that (\ref{assum-sig-gam}) is the only assumption that we make on $F^{\eta}_{B}$ for different  values of $\eta$.

\paragraph{Notation.}
We often use the abbreviated notation $P_{n}= P_{n}^{0}$,  $F_{B}= F^{0}_{B}$, $P^\eta= P_{1}^{\eta}, E^\eta= E_{1}^{\eta}$, etc.

Observe that under this model, particles jump first and then reproduce, just as in \cite{LS15}.
This does not change the limiting tail behavior of the maximal displacement as explained in Remark~3 therein and noting the Taylor expansion of function $Q$ given by \eqref{Q-\n1} below.

We study the tail behavior of the maximal displacement of ${X}$ up to generation $[nt]$ for $t\geq 0$, that is,
\bq \label{M-n-def}
M_{nt} = \max\{z\in \ze\,|\, L_{nt}(z)>0 \}.
\eq
Define $(u_{k}^{\eta}(y))$ to be function obtained by linear interpolation in $y$ from the values
\bn
u_{k}^{\eta}(y)=P_{n}^{\eta} (M_{k}\geq y), \  y \in \zz{Z}, \ k \in\zz{Z}_{\geq 0}.
\en

Let $\th\in\zz{R}$ and $n\in \mathbb{\n}$. Our first main result establishes the weak convergence of $M_{nt}/\sqrt{n}$ under $P^{\th/n}_{n}$, to the rightmost point in the support of the local time~$\wt L_t$ of the limiting  super-Brownian motion.

\begin{theorem} \label{theorem-weak-con}
For every $t\geq 0$ and $x\geq 0$,
\bn
\lim_{n\rr\infty }u_{nt}^{\th/n}(\sqrt{n}x)=P^\theta_{\dl_{0}}\big(\wt L_{t}([x,\infty)) >0\big).
\en
\end{theorem}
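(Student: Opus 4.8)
\emph{Proof proposal.} The plan is to pass to the one‑particle version of the walk and reduce the theorem to the analysis of a discrete heat‑type recursion. Since under $P_n^{\th/n}$ the $n$ initial particles evolve independently and each generates a copy of the one‑particle walk, translation invariance gives
\[
1-u_{nt}^{\th/n}(\sqrt n x)=P_n^{\th/n}\bigl(M_{[nt]}<\sqrt n x\bigr)=\bigl(1-\phi_{[nt]}^{\th/n}(\sqrt n x)\bigr)^{\!n},
\]
where $\phi_k^{\eta}(y):=P^{\eta}(M_k\ge y)$ for the walk started by a single particle at the origin. Conditioning on the first jump and the first branching yields, for $y>0$, the monotone recursion
\[
\phi_k^{\eta}(y)=\sum_{j=-R}^{R}a_j\,Q_{\eta}\bigl(\phi_{k-1}^{\eta}(y-j)\bigr),\qquad \phi_k^{\eta}(y)=1\ \ (y\le 0),
\]
with $Q_{\eta}(s)=1-f_{\eta}(1-s)$, $f_{\eta}$ the offspring generating function; by \eqref{assum-sig-gam} one has $Q_{\eta}(s)=(1+\eta)s-\tfrac12 c_{\eta}s^{2}+O(s^{3})$ uniformly for small $\eta$, with $c_{\eta}=\sigma^{2}(\eta)+\eta+\eta^{2}\to\sigma^{2}$. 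Put $g_n(t,x):=n\,\phi_{[nt]}^{\th/n}(\sqrt n x)$. Once $g_n$ is known to stay bounded, $(1-\phi_{[nt]}^{\th/n}(\sqrt n x))^{n}=e^{-g_n(t,x)}(1+o(1))$, so it suffices to prove $g_n(t,x)\to v(t,x):=-\log P_{\dl_0}^{\th}\bigl(\wt L_t([x,\infty))=0\bigr)$ and then invoke continuity of $s\mapsto 1-e^{-s}$ (the degenerate corner $t=x=0$ being handled separately).

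The lower bound $\liminf_n u_{nt}^{\th/n}(\sqrt n x)\ge P_{\dl_0}^{\th}\bigl(\wt L_t([x,\infty))>0\bigr)$ is the soft half. Since $\{M_{[nt]}\ge\sqrt n x\}\supseteq\{L_{[nt]}((\sqrt n x,\infty))>0\}$, and the rescaled local‑time measure of the near‑critical walk converges weakly to $\wt L_t$ (\cite{Lalley2009,L-Z2010}), testing against continuous $0\le f$ with $\supp f\subset(x,\infty)$ and applying the portmanteau theorem to the open event $\{\langle\,\cdot\,,f\rangle>0\}$ gives $\liminf_n P(M_{[nt]}\ge\sqrt n x)\ge P_{\dl_0}^{\th}(\langle\wt L_t,f\rangle>0)$; letting $f\uparrow\mathbf 1_{(x,\infty)}$ and using that $\wt L_t$ has no atoms (its occupation density is jointly continuous in dimension one) identifies the supremum over $f$ with $P_{\dl_0}^{\th}(\wt L_t([x,\infty))>0)$.

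For the upper bound — the heart of the matter — I would show $g_n(t,x)\le v(t,x)+o(1)$ by a discrete comparison principle against the singular PDE. First, identifying $v$ analytically: inserting $\varphi=\lam\mathbf 1_{[x,\infty)}$ into the log‑Laplace equation for the occupation measure of the limiting super‑Brownian motion and letting $\lam\to\infty$ exhibits $v(t,\cdot)$ (as a function of the starting point) as a positive solution of the problem \eqref{sing-pde} with the half‑line $\{y<x\}$ in place of the ball, zero initial data, and blow‑up as $y\uparrow x$; by the argument of Theorem~1 of \cite{pinsky-pde} this one‑sided problem has a minimal positive solution $\ul v$, finite and smooth in the open region, and $\ul v\le v$. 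Hence it is enough to prove $g_n\le \ul v+o(1)$. Because the recursion is monotone ($Q_{\th/n}$ is increasing on $[0,1]$ and $\{a_j\}$ is a probability kernel), one propagates the invariant $\phi_{k}^{\th/n}(\,\cdot\,)\le\min\{1,\tfrac1n\ul v(k/n,\,\cdot\,/\sqrt n)+e_k\}$: where $\ul v\ge n$ — a boundary layer of width $O(n^{-1/2})$ — the bound is trivial since the right‑hand side is $1$ and $\sum_j a_j Q_{\th/n}(1)=1-p_0^{\th/n}<1$; in the interior one Taylor‑expands $Q_{\th/n}$ and $\ul v$, the $O(n^{-2})$ leading terms cancel thanks to the PDE (using $c_{\th/n}\to\sigma^{2}$ and the vanishing of the odd moments of $\{a_j\}$), and what remains is an error recursion $e_k\le(1+\th/n)e_{k-1}+o(n^{-2})$ with $e_0=0$, whence $n\,e_{[nt]}\to0$. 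This gives $g_n(t,x)\le\ul v(t,x)+o(1)\le v(t,x)+o(1)$, and in particular the boundedness used above. Combining with the lower bound and $u_{nt}^{\th/n}(\sqrt n x)=1-e^{-g_n(t,x)}+o(1)$ finishes the proof.

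The step I expect to be the genuine obstacle is the interior Taylor estimate near the spatial boundary: $\ul v$ and its $j$‑th spatial derivatives blow up like $(x-y)^{-2}$ and $(x-y)^{-2-j}$ as $y\uparrow x$, so the truncation errors in the spatial and temporal expansions are only $o(n^{-2})$ outside an $n^{-1/4}$‑neighbourhood of $\{y=x\}$, whereas $\phi$ need not be $o(1)$ throughout that neighbourhood. Controlling this layer — for instance by comparing instead with the minimal solution of the one‑sided problem on a slightly shrunken domain, or by inserting a separate crude estimate on an intermediate strip $x-y\asymp n^{-\az}$ — is where the careful work lies; the rest is bookkeeping with the recursion and standard parabolic regularity for $\ul v$. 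An alternative organisation avoids the shortcut $\ul v\le v$: extract subsequential limits of $g_n$ by Helly/Arzelà–Ascoli (using a discrete smoothing estimate together with monotonicity of $g_n$ in $t$ and in $x$), identify each limit as a solution of the one‑sided \eqref{sing-pde} with zero initial data, and conclude via the uniqueness statement analogous to Proposition~\ref{prop-uniq}; the boundary layer remains the crux either way.
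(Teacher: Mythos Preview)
Your lower bound via weak convergence of the rescaled occupation measure is correct and clean; the paper does not separate the two directions, so this portmanteau argument is a genuine shortcut for one half.

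Your primary upper-bound route---propagating the invariant $\phi_k^{\th/n}\le\min\{1,\tfrac1n\underline v(k/n,\cdot/\sqrt n)+e_k\}$ by Taylor-matching the discrete recursion to the PDE---has the gap you flag, and I do not think it closes as sketched. Since $\underline v(t,x)\sim C x^{-2}$ near $x=0$, the layer $\{\underline v\ge n\}$ sits at $x\asymp n^{-1/2}$, i.e.\ $O(1)$ lattice sites from the absorbing boundary; just outside it, $\partial_x^3\underline v\sim n^{5/2}$ and the spatial Taylor remainder is $O(1)$ per step, not $o(n^{-2})$. Shrinking the domain by $\eps$ only translates the singularity to $x=\eps$; the same mismatch reappears there. (A minor slip: you invoke vanishing of odd moments of $\{a_j\}$, which is not assumed---only the mean is zero. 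The third moment contributes a term $\asymp n^{-5/2}\partial_x^3\underline v$, harmless in the interior but part of the same boundary difficulty.) It may be possible to patch an intermediate strip by a crude supersolution, but you would need a separate a~priori bound on $n\phi_{nt}^{\th/n}(\sqrt n x)$ for $x$ small, which is essentially what the other route supplies.

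Your ``alternative organisation'' is exactly what the paper does, but the phrase ``discrete smoothing estimate together with monotonicity'' hides the real mechanism, and your closing remark that ``the boundary layer remains the crux either way'' is not accurate. The paper's device is a \emph{discrete Feynman--Kac representation} (Lemma~\ref{lemma-disc-f-c-upto}): one writes
\[
w_m^{\th/n}(x)=E_x\Big(\Big(1+\tfrac{\th}{n}\Big)^{\bar\tau_{x_0}\wedge m}\,w_{m-\bar\tau_{x_0}\wedge m}^{\th/n}(\W_{\bar\tau_{x_0}\wedge m})\prod_{j=1}^{\bar\tau_{x_0}\wedge m}\bigl[1-H^{\th/n}(w_{m-j}^{\th/n}(\W_j))\bigr]\Big),
\]
where $\W$ is the time-reversed random walk stopped upon descending below any fixed level $x_0>0$. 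The point is that this formula only involves values of $w^{\th/n}$ at spatial points $\ge x_0-R$, where $n w^{\th/n}$ is uniformly bounded (Lemma~\ref{tight-n-w}(i)); the singularity at the origin never enters. From this representation one reads off equicontinuity in $x$ and $t$ on compacta of $\{t>0,\,x>0\}$ (Lemmas~\ref{lemma-spat-reg}--\ref{lemma-reg-time}), gets subsequential limits by Arzel\`a--Ascoli, and passes to the limit in the Feynman--Kac formula itself to obtain a continuous Feynman--Kac representation (Proposition~\ref{prop-phi-con}) and thence the PDE. The boundary condition $\phi(t,0+)=\infty$ is established \emph{afterwards}, by an elementary lower bound (Lemma~\ref{lemma-boundary}(ii)). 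So the paper's route decouples interior regularity from the boundary blow-up, which is precisely what your comparison argument cannot do.
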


\begin{remark} \label{remark-weak}
The convergence in Theorem \ref{theorem-weak-con} is new even when $\th=0$, i.e., the critical case. In Corollary 2 of \cite{LS15}, the tail behaviour of the maximal displacement over all generations was derived for the critical case. However, such results are very different from the tail behaviour of $M_{nt}$, which describes the propagation of the support of  branching random walks. In terms of {the} proof, Theorem \ref{theorem-weak-con} requires different methods for deriving the tightness of the sequence $\left(u^{
\th/n}_{[ n_{} t]}(\sqrt{n} x): t\geq 0, \, x>0\right)_{n\geq 1}$ as shown in Section \ref{sec-conv}, and for the analysis of the limiting functional which satisfies a singular parabolic PDE as discussed in Section \ref{section-pde-lim}.
\end{remark}

\begin{remark}
When $\th\neq 0$ and {$F^{\th/n}_{B}$} is {either Poisson or  Binomial with mean~$1+\theta/n$}, the convergence of $u_{n\cdot}^{\th/n}(\sqrt{n}\cdot)$ follows from the convergence in the critical case and the convergence of the likelihood ratio between the near-critical and critical systems. {We refer to Section 3.3 of \cite{L-Z2010} for a similar argument on the convergence of the likelihood ratio between SIR epidemics and critical branching random walks.}  In general, such an argument  fails because there may be no likelihood ratio not to mention its convergence.
\end{remark}

\begin{remark}
The finite range assumption of the underling random walk facilitates controlling the overshoot of the random walk in analyzing a discrete Feynman-Kac formula; see, for example, \eqref{eq:nw_bd_exit_prob}, \eqref{eq:I2}, \eqref{j-2-fr} and \eqref{gf1}. By using the finite moment results of the  overshoot distribution (Lemma 10 in \cite{LS15} and Exercise 6, p. 232 of \cite{Spitzer}), one can relax the assumption to be finite 5th moment (the exponential decay in Lemma \ref{tight-n-w} (ii) becomes polynomial decay, but main theorems remain true.) See Remark \ref{rmk:I_2_5th_moment} for an example on how to modify the argument under a finite 5th moment assumption.
\end{remark}

We now describe {a corollary} to Theorem \ref{theorem-weak-con}. Consider the following Fisher-Kolmogorov-Petrovskii-Piscounov (FKPP) equation
\be\label{kpp-uniq}
\left\{
\begin{aligned}
\frac{\partial \phi}{\partial t}(t,x) &=\frac{\sigma_R^2}{2} \frac{\partial^{2}\phi}{\partial x^{2} }(t,x)+\theta \phi(t,x) - \frac{\sigma^2}{2} \phi^{2}(t,x), \quad t>0, \ x>0, \\
 \lim_{t \dr 0}\phi (t,x) &= 0, \mbox{ for all } x>0\\
\lim_{x \dr 0} \phi(t,x) &= \infty, \mbox{ for all } t>0\\
\lim_{x\rr \infty}   \phi(t,x) &= 0 \quad \textrm{uniformly on } [0,T] \mbox{ for any } T>0.
\end{aligned}
\right.
\ee
The existence and uniqueness of  positive solutions to (\ref{kpp-uniq}) will be proved in Section \ref{section-pde-lim}.

We will also need the following nonlinear ODE. By Proposition 2 in \cite{pinsky-pde}, for each $\rho \in (0,\sqrt{2})$, there exists a unique positive increasing solution $f=f_\rho\in \mathcal C^2([0,\infty))$ to the equation
\be \label{trav-wave}
\left\{
\begin{aligned}
\frac{1}{2} f^{''} -\rho f' +f-f^2 & = 0, \quad x\geq 0 \\
 f(0) &=0\\
 \lim_{x\rr \infty} f(x) &=1.
\end{aligned}
\right.
\ee
Moreover, one has
\bn
\lim_{x\rr \infty} \frac{1}{x} \log(1-f_\rho(x)) = \rho - (\rho^2+2)^{1/2}.
\en
We will show in Corollary \ref{corol-u-x} that a traveling-wave sub-solution of \eqref{kpp-uniq} can be obtained from $f_{\rho}$. See Section 1 of \cite{HHK06} for a discussion about the FKPP equation and traveling-wave solutions.

In the following corollary, we derive some exponential bounds on $u_{n \cdot}^{\th/n}(\sqrt{n}\cdot)$.
\begin{corollary}  \label{corol-u-x}
For all $ x>0, t>0$, we have
\be \label{u-th-asymp}
\lim_{n\rr\infty }u_{nt}^{\th/n}(\sqrt{n}x)=  1- e^{-\phi(t,x)},
\ee
where the following bounds on $\phi$ hold:
 \begin{enumerate}[(i)]
\item  when $\th\geq 0$, for every $\eps>0$, there exists $c_{\eps}>0$ and $M_{\eps}>0$ such that for all $ x>M_{\eps}$,
$$
\phi(t,x) \leq \frac{1}{\sigma^2}\Big(\theta+\frac{12\sigma_R^{2}}{x^2}\Big)\exp\Big( -\Big(\frac{x^{2}}{2\sigma_R^{2}(1+\eps)t }- \theta t -c_{\eps}\Big)_{+}\Big), \quad \textrm{for all } t\geq 0.
 $$
\item when $\th> 0$, for each $\rho \in (0,\sqrt{2})$,  we have
\bn
\phi(t,x) \geq  \frac{2\theta}{\sigma^2} f_{\rho}\left( \left(\rho \theta t - \frac{\sqrt{\theta}}{\sigma_R}x\right)_+\right), \quad \textrm{for all } x>0, \  t\geq 0.
\en
\end{enumerate}
\end{corollary}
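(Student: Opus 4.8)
The plan is to combine Theorem \ref{theorem-weak-con} with the classical identity relating the hitting probability of the support of super-Brownian local time to a solution of a semilinear PDE. First, by the log-Laplace (cumulant) representation for super-Brownian motion, for fixed $t,x>0$ the quantity $P^\theta_{\delta_0}(\wt L_t([x,\infty))=0)$ equals $e^{-\phi(t,x)}$ where $\phi$ is the minimal positive solution of the semilinear Cauchy-Dirichlet problem \eqref{kpp-uniq}: one obtains \eqref{kpp-uniq} from the evolution equation $\partial_t v = \tfrac{\sigma_R^2}{2}\Delta v + \theta v - \tfrac{\sigma^2}{2}v^2$ satisfied by $v(t,x)=-\log E^\theta_{\delta_x}\exp(-\lambda \wt L_t(g))$ in the limit of a test function $g\to\infty\cdot\mathbf 1_{[x,\infty)}$, after translating the moving ``absorbing boundary'' at the point $x$ to a fixed boundary; the boundary blow-up $\phi(t,x)\to\infty$ as $x\downarrow 0$ and the decay $\phi(t,x)\to 0$ as $x\to\infty$ follow from standard super-Brownian estimates (local extinction near a heavily-charged region, and the exponential spatial tail of the total mass). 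Granting the existence and uniqueness of the positive solution of \eqref{kpp-uniq} (established in Section \ref{section-pde-lim}), we identify this $\phi$ with the one in the statement, so that \eqref{u-th-asymp} is precisely Theorem \ref{theorem-weak-con} rewritten via $P^\theta_{\delta_0}(\wt L_t([x,\infty))>0)=1-e^{-\phi(t,x)}$.

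For part (i), the strategy is to build an explicit super-solution of \eqref{kpp-uniq} and invoke the comparison principle. I would first drop the nonlinear term $-\tfrac{\sigma^2}{2}\phi^2\le 0$ and observe that any positive super-solution $\psi$ of the linear equation $\partial_t\psi=\tfrac{\sigma_R^2}{2}\psi_{xx}+\theta\psi$ on $\{x>0\}$ with $\psi(t,0)=+\infty$ and $\psi\ge 0$ dominates $\phi$. A natural candidate is a heat-kernel-type expression: $e^{\theta t}$ times the probability that a Brownian motion with variance parameter $\sigma_R^2$ started from $x$ reaches $0$ by time $t$, i.e.\ a Gaussian-tail bound of the form $C\exp(-x^2/(2\sigma_R^2 t)+\theta t)$; to get the stated $(1+\eps)$-factor and the additive polynomial correction $12\sigma_R^2/x^2$, one refines this by solving the linear equation with a slightly mismatched diffusion constant $\sigma_R^2(1+\eps)$ and adding a small multiple of a stationary super-solution, then absorbing the $\theta$-part of the operator and choosing the prefactor $\tfrac{1}{\sigma^2}(\theta+12\sigma_R^2/x^2)$ so that the boundary condition at $x\to r=0$ and the initial condition are respected; the $(\cdot)_+$ truncation in the exponent reflects that the bound is only informative once $x^2/(2\sigma_R^2(1+\eps)t)>\theta t+c_\eps$.

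For part (ii), the strategy is dual: construct a traveling-wave \emph{sub}-solution of \eqref{kpp-uniq} from the ODE profile $f_\rho$ of \eqref{trav-wave} and apply comparison the other way. After the scaling $x\mapsto (\sqrt\theta/\sigma_R)x$, $t\mapsto \theta t$, $\phi\mapsto (2\theta/\sigma^2)\phi$, equation \eqref{kpp-uniq} becomes $\partial_t\phi=\tfrac12\phi_{xx}+\phi-\phi^2$ with the same boundary blow-up, so it suffices to check that $(t,x)\mapsto f_\rho((\rho t - x)_+)$ is a sub-solution of this normalized equation: away from the corner $\{x=\rho t\}$, plugging the traveling wave into the operator gives exactly zero by \eqref{trav-wave} (with the correct sign of the $\rho f'$ term since the wave moves right), on the region $x>\rho t$ the function is identically $f_\rho(0)=0$ which is trivially a sub-solution, and at the corner the function is continuous with the gradient having the right (convex) kink so that it remains a viscosity/distributional sub-solution; moreover $f_\rho\le 1$ keeps it below the blow-up boundary datum and it has the correct zero initial value. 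Comparison then yields $\phi(t,x)\ge (2\theta/\sigma^2)f_\rho((\rho\theta t-(\sqrt\theta/\sigma_R)x)_+)$.

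The main obstacle I expect is part (i): producing a super-solution whose leading Gaussian exponent carries exactly the constant $1/(2\sigma_R^2)$ (up to the $(1+\eps)$ slack) \emph{and} simultaneously has the correct boundary blow-up at $x=0$ and the right polynomial prefactor, while handling the nonautonomous nature introduced by the moving support boundary at the point $x$ — the naive heat kernel gives the Gaussian decay but not automatically a clean super-solution of the full nonlinear problem with the stated explicit constants, so some care is needed in choosing the comparison function and in verifying the boundary behavior via the maximum principle for \eqref{kpp-uniq}. The identification of $\phi$ with the minimal solution of \eqref{kpp-uniq} in the first paragraph is routine given Section \ref{section-pde-lim}, but it is the logical linchpin and should be stated carefully.
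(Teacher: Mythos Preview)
Your treatment of the convergence \eqref{u-th-asymp} and of part (ii) is essentially the paper's approach: the identity $P^\theta_{\delta_0}(\wt L_t([x,\infty))>0)=1-e^{-\phi(t,x)}$ is exactly what is extracted in the proof of Theorem~\ref{theorem-weak-con}, and for (ii) the paper also verifies that $w(t,x)=\frac{2\theta}{\sigma^2}f_\rho((\rho\theta t-\sqrt{\theta}x/\sigma_R)_+)$ satisfies the same PDE with finite boundary value at $x=0$, and then runs the comparison argument of Proposition~\ref{prop-uniq} on $h=e^{-\theta t}(\phi-w)$ to conclude $h\ge 0$.

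For part (i), however, your linearization strategy has a genuine gap. You propose to drop $-\tfrac{\sigma^2}{2}\phi^2$ and dominate $\phi$ by a positive super-solution of the \emph{linear} equation with $\psi(t,0)=+\infty$; but no finite function can be such a super-solution, since the linear parabolic problem with infinite lateral data forces $\psi\equiv+\infty$. Your heat-kernel candidate $e^{\theta t}P_x(\bar\tau_0^{\sigma_R B}\le t)$ is bounded and so fails the blow-up requirement, and no amount of tweaking the diffusion constant or adding a stationary correction repairs this. A further symptom: the stated upper bound carries the prefactor $\tfrac{1}{\sigma^2}$, which comes from the branching coefficient in the \emph{nonlinear} term, so any argument that simply discards $-\tfrac{\sigma^2}{2}\phi^2$ cannot reproduce these constants.

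The paper's route for (i) sidesteps all of this. It observes the elementary inclusion $\{\wt L_t([x,\infty))>0\}\subset\{\wt L_t(B_x(0)^c)>0\}$, which together with \eqref{exp-u} gives $\phi(t,x)\le u_x(t,0)$, where $u_r$ is the minimal positive solution of the two-sided problem \eqref{sing-pde}. The explicit bound \eqref{up-bound-u} on $u_r$ is already proved in Pinsky's \cite{pinsky-pde}, and evaluating it at $r=x$, spatial argument $0$, yields exactly the inequality in (i). So the construction of the super-solution with the correct constants is not redone here; it is imported from \cite{pinsky-pde}, and the only new step is the trivial one-sided versus ball comparison.
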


\begin{remark}
 The convergence  in \eqref{u-th-asymp} \emph{cannot be} strengthened to be uniform convergence in $(t,x)$ over any infinite domain of the form $\{(t,x): t\geq t_0,\ x\geq x_0\}$. The reason is due to the discontinuity of $u_{n\cdot}^{\th/n}(\sqrt{n}\cdot)$ near the boundary of the support of the branching random walk; see the next theorem for the precise statement.
\end{remark}

The next main result establishes the large time growth rate of the support of $X^{\theta/n}$:
\begin{theorem} \label{theorem-speed}
Let $\bar \gamma = (2\th \sigma_R^{2})^{-1}$ for $\theta>0$.
\begin{itemize}
\item[(i)] For any $\gamma <\bar \gamma$,  there exists $N(\gamma)>0$ such that for all $n>N(\gamma)$,
$$
P^{\theta/n}_{n}\big(L_{n\gamma t}((\sqrt{n} t,\infty)) =0 \mbox{ for all } t \mbox{ large enough}\big)  =1;
$$
\item[(ii)]  for any $\gamma >\bar \gamma$,  there exists $N(\gamma)>0$ such that for all $n>N(\gamma)$,
$$
P^{\theta/n}_{n} \big(X^{}_{n\gamma t }((\sqrt{n} t,\infty)) >0  \mbox{ for all } t \mbox{ large enough} \, | \,   X \mbox{ survives} \big)  =1.
$$
\end{itemize}
\end{theorem}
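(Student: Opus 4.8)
The plan is to transfer the known growth-rate result for the limiting super-Brownian motion (Theorem 1 of \cite{pinsky95}, quoted above) to the discrete branching random walk, using the PDE-analytic bounds from Corollary \ref{corol-u-x} together with a Borel-Cantelli argument along a geometric subsequence of generations. The key observation is that the linear speed $\bar\gamma = (2\theta\sigma_R^2)^{-1}$ arising here is exactly the image of Pinsky's rate $\bar\gamma_{\mathrm{SBM}} = (2\theta\sigma_R^2)^{-1/2}$ under the diffusive rescaling: a particle at discrete position $\sqrt n\, t$ at generation $n\gamma t$ corresponds, after rescaling space by $1/\sqrt n$ and time by $1/n$, to spatial point $t$ at time $\gamma t$, and the condition $\gamma < (2\theta\sigma_R^2)^{-1}$ is the rescaled form of the subcriticality condition for the support.

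For part (i), I would first fix $\gamma<\bar\gamma$ and choose $\gamma<\gamma'<\bar\gamma$. The event that some particle reaches beyond $\sqrt n\, t$ by generation $n\gamma t$ has probability $u^{\theta/n}_{n\gamma t}(\sqrt n\, t)$, and by the monotone/interpolation structure of $(u^\eta_k)$ it suffices to control this along the times $t = t_j := (1+\epsilon)^j$ for $j$ large, since between consecutive such times the extra displacement is controlled by the finite range $R$ of the walk (at most $R\cdot n\gamma(t_{j+1}-t_j)$ sites, which is negligible compared to $\sqrt n(t_{j+1}-t_j)$ only after dividing — here one must be slightly careful and instead compare $u^{\theta/n}_{n\gamma t_{j+1}}(\sqrt n\, t_j)$, using monotonicity in the generation and in the spatial argument). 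The crucial quantitative input is the upper bound in Corollary \ref{corol-u-x}(i): since $u^{\theta/n}_{nt}(\sqrt n x)\le \phi(t,x)$ up to $o(1)$ errors (more precisely, one needs a non-asymptotic version of this bound valid for all large $n$, which the proof of Corollary \ref{corol-u-x} should supply via the discrete Feynman-Kac estimates), we get for $x = \gamma t$, $s = \gamma t$,
\[
u^{\theta/n}_{n\gamma t}(\sqrt n\,\gamma t) \lesssim \Big(\theta + \tfrac{C}{(\gamma t)^2}\Big)\exp\Big(-\Big(\tfrac{(\gamma t)^2}{2\sigma_R^2(1+\epsilon)\gamma t} - \theta\gamma t - c_\epsilon\Big)_+\Big) = \Big(\theta+\tfrac{C}{(\gamma t)^2}\Big)\exp\Big(-\gamma t\Big(\tfrac{1}{2\sigma_R^2(1+\epsilon)} - \theta\gamma\Big) + c_\epsilon\Big).
\]
Because $\gamma<\bar\gamma = (2\theta\sigma_R^2)^{-1}$, we may pick $\epsilon$ small enough that $\tfrac{1}{2\sigma_R^2(1+\epsilon)} - \theta\gamma > 0$, so the exponent is $-c\gamma t$ with $c>0$; hence $\sum_j u^{\theta/n}_{n\gamma t_j}(\sqrt n\,\gamma t_j) < \infty$ for every fixed $n$ large, and Borel-Cantelli gives that a.s. only finitely many of the events $\{L_{n\gamma t_j}((\sqrt n\, t_j,\infty))>0\}$ occur. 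Filling the gaps between $t_j$'s by the finite-range argument and re-indexing $t\mapsto$ continuum $t$ completes (i).

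For part (ii), the natural route is to use the lower bound in Corollary \ref{corol-u-x}(ii) together with a comparison to the limiting super-Brownian motion and Pinsky's lower result, but since we want an almost-sure statement for the \emph{discrete} system at finite $n$, I would instead argue by a direct second-moment / survival comparison: condition on survival of $X$, and use that a surviving near-critical branching random walk, after rescaling, dominates (in an appropriate coupling or via the convergence of Theorem \ref{theorem-weak-con} applied on a growing block) a super-Brownian motion conditioned to survive, whose support is known to grow at rate $\bar\gamma_{\mathrm{SBM}}$ a.s. by \cite{Kyprianou05}. Concretely, fix $\gamma>\bar\gamma$ and $\bar\gamma<\gamma''<\gamma$; by the quoted strengthening in \cite{Kyprianou05}, for the SBM one has $\wt X_{\gamma s}(B_s(0)^c)>0$ for all large $s$ a.s. on survival; the lower bound in Corollary \ref{corol-u-x}(ii), namely $\phi(t,x)\ge \tfrac{2\theta}{\sigma^2} f_\rho\big((\rho\theta t - \tfrac{\sqrt\theta}{\sigma_R}x)_+\big)$, shows that $\lim_n u^{\theta/n}_{nt}(\sqrt n x)=1-e^{-\phi(t,x)}$ is bounded away from $0$ whenever $x < \tfrac{\rho\sigma_R\sqrt\theta}{1}\, t$ i.e. in a spatial range whose slope $\rho\sigma_R\sqrt\theta$ can be taken arbitrarily close to $\sqrt 2\,\sigma_R\sqrt\theta = 1/\bar\gamma_{\mathrm{SBM}}$ as $\rho\uparrow\sqrt2$; rescaling back, this says the discrete support reaches $\sqrt n\, t$ at generation $n\gamma t$ with probability bounded below uniformly in large $n$, for any $\gamma>\bar\gamma$. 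Upgrading this positive-probability statement to probability one (on survival, for all large $t$) requires a regeneration/renewal argument along the rightmost path: decompose generations into consecutive blocks of length $\asymp n$, and on each block use the uniform lower bound plus the branching property (the subtree rooted at a far-right particle is again a near-critical BRW with $\gtrsim n$ starting particles on the survival event — here one needs that, conditionally on survival, the number of particles is eventually $\Theta(n)$-or-larger in the near-critical regime, which follows from standard near-critical Galton-Watson estimates) to show the right front advances by at least $\gamma'' \cdot (\text{block length in rescaled time})$ with probability bounded below, independently across blocks; a Borel-Cantelli / renewal argument then yields the a.s. linear lower speed $\gamma''$, and letting $\gamma''\uparrow\gamma$ gives (ii).

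The main obstacle, I expect, is part (ii): passing from the positive-probability lower bound (which is essentially immediate from Corollary \ref{corol-u-x}(ii)) to an almost-sure statement for the discrete system uniformly in all large $n$. This needs (a) a clean regeneration structure for the rightmost particle of a near-critical BRW — in contrast to the supercritical case there is no spine/many-to-one shortcut that survives the $n\to\infty$ limit cleanly, so one must work with the $\Theta(n)$-particle block decomposition and track how many particles survive each block — and (b) control of the conditioning on $\{X\text{ survives}\}$, i.e. showing that on this event the population stays large enough (of order $n$) for long enough to seed the next block, which relies on near-critical Galton-Watson fluctuation estimates (using the moment assumption \eqref{assum-sig-gam}). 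Part (i), by contrast, is a fairly direct Borel-Cantelli consequence of the upper bound in Corollary \ref{corol-u-x}(i) once the finite-range gap-filling is handled.
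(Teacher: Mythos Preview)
Your plan for part (i) has a real gap. The upper bound in Corollary \ref{corol-u-x}(i) is a bound on the \emph{limiting} function $\phi$, and you need a bound on $u^{\theta/n}_{n\gamma t}(\sqrt n\, t)$ for \emph{fixed} large $n$ and $t\to\infty$. The convergence in \eqref{u-th-asymp} is only locally uniform; the paper explicitly points out (in the remark after Corollary \ref{corol-u-x} and in Remark \ref{rmk:speed_BRW}) that it cannot be made uniform over $\{t\ge t_0\}$, precisely because of the discontinuity of the limit as a function of $\gamma$ at $\bar\gamma$. So your ``non-asymptotic version \dots which the proof of Corollary \ref{corol-u-x} should supply'' is not available with the sharp constant $1/(2\sigma_R^2)$ needed to hit the threshold $\bar\gamma$. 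The paper sidesteps this entirely: it computes the first moment
\[
E_n^{\theta/n}\big(L_{n(k+1)}(B^c_{\sqrt n\, vk})\big)=n\sum_{i\le n(k+1)}\Big(1+\frac{\theta}{n}\Big)^i P(|W_i|>\sqrt n\, vk)
\]
directly, bounds the random-walk probability by the sharp large-deviation estimate (Corollary A.2.7 in \cite{Lawler2010}), obtains summability in $k$ whenever $v>\sqrt{2\theta\sigma_R^2}$, and concludes by Markov plus Borel--Cantelli along integers (then monotonicity for real $t$). No PDE, no limit theorem.

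For part (ii) your regeneration/block scheme is vague, and the difficulties you anticipate (seeding each block with $\Theta(n)$ particles on the survival event, independence across blocks) are genuine and not easy to close. The paper takes a completely different and much shorter route: the Biggins additive martingale
\[
W^{\gamma}_k=m(\gamma)^{-k}\sum_{z}e^{-\gamma z}X_k(z),\qquad m(\gamma)=\Big(1+\frac{\theta}{n}\Big)\sum_z a_z e^{-\gamma z}.
\]
A Taylor expansion shows that Biggins' uniform-integrability criterion $m(\gamma)\exp(-\gamma m'(\gamma)/m(\gamma))>1$ holds for $\gamma=\beta/\sqrt n$ with $|\beta|<\sqrt{2\theta/\sigma_R^2}$ and $n$ large (Lemma \ref{lem-exp-w}). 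Hence $W^{-\beta/\sqrt n}$ has a nondegenerate limit, which by the usual $0$--$1$ argument is positive a.s.\ on survival. Comparing $W^{-\beta/\sqrt n}$ with $W^{-(\beta-\varepsilon)/\sqrt n}$ shows that the contribution of particles with $z\le \sqrt n(\beta-\varepsilon)\sigma_R^2\, t$ to $W^{-\beta/\sqrt n}_{[nt]}$ tends to $0$, so particles beyond $\sqrt n(\beta-\varepsilon)\sigma_R^2\, t$ must exist for all large $t$. Letting $\beta\uparrow\sqrt{2\theta/\sigma_R^2}$ and $\varepsilon\downarrow 0$ gives the speed. This is exactly the Kyprianou/Biggins argument adapted to the near-critical scaling; it avoids any block decomposition or conditioning gymnastics.
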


\begin{remark}\label{rmk:speed_BRW}
In \cite{pinsky95}, the author established the linear growth of the support of supercritical super-Brownian motions by observing that the bounds on $(\phi(t,x))$  given in Corollary \ref{corol-u-x} imply that
\begin{equation}\label{eq:lim_fn_gamma}
\lim_{t\to \infty} P^{\theta}_{\delta_0}\big(\wt L_{\gamma t}((t,\infty)) =0  \big)
=\left\{
\begin{aligned} &0 &\mbox{ if } \gamma  <\bar \gamma\\
         &P^{\theta}_{\delta_0}({\zeta_{\wt X}<\infty }) &\mbox{ if } \gamma  >\bar \gamma,
\end{aligned}
\right.
\end{equation}
{where $\zeta_{\wt X}$ is the extinction time of $\wt X$.}
It follows from Corollary {\ref{corol-u-x}} that for any $\eps>0$, there exist $T_0>0$ such that for any $t>T_0$, we can find an $N_0=N_0(t)$ such that for all $n\geq N_0$,
\begin{equation}\label{eq:n_fn_gamma}
P^{\theta/n}_{n}\big( L_{n \gamma t}((\sqrt{n} t,\infty)) =0  \big)
\left\{
\begin{aligned} &\leq \eps &\mbox{ if } \gamma  <\bar \gamma\\
         &\leq P^{\theta}_{\delta_0}({\zeta_{\wt X}<\infty} ) +\eps &\mbox{ if } \gamma  >\bar \gamma.
\end{aligned}
\right.
\end{equation}
This result, however, is not enough to establish the linear growth of the support of branching random walk because we would need \eqref{eq:n_fn_gamma} to hold for \emph{all} $t$ large enough.
Such a \emph{uniform} convergence seems difficult to prove given the discontinuity of the limit in \eqref{eq:lim_fn_gamma} as a function of $\gamma$. We prove the linear growth result using  another argument.

\end{remark}

Finally, {analogous to the critical case in} \cite{LS15}, we derive the  tail distribution of the maximal displacement over all generations, namely, $M=\sup_k M_k$.

\begin{theorem} \label{theorem-u-inf}
When $\th\neq 0$, for every $x_0> 0$, uniformly over $x\in [x_0,\infty),$ we have
\begin{equation}\label{eq:conv_M}
\lim_{n\rr\infty }P^{\theta/n}_n(M\geq \sqrt{n}x) =  1-e^{-\psi(x)},
\end{equation}
where
\begin{equation} \label{eq:psi_formula}
 \psi(x) = \frac{2\theta^{+}}{\sigma^2} + \frac{3|\theta| }{\sigma^{2}} \left( \coth^2\left(\sqrt{\frac{|\theta| }{2\sigma_R^2}}x\right)-1 \right), \quad x>0,
\end{equation}
 is the unique solution to
\begin{equation} \label{psi-eq}
\left\{
\begin{aligned}
\frac{\sigma^2_R}{2} \frac{\partial^{2}{\psi}}{\partial x^{2} }&= -\theta \sigma^2 \psi +\frac{\sigma^2}{2}\psi^2 ,  \quad x >0, \\
\lim_{x \to 0+} \psi(x) &= \infty,\\
\lim_{x \to \infty} \psi(x) =&\frac{2\th^+}{\sigma^2}.
\end{aligned}
\right.
\end{equation}
Moreover, we have
\begin{equation} \label{psi-asymptotic}
  \psi(x) \sim \frac{2\theta^+}{\sigma^2} + \frac{12 |\theta|}{\sigma^2} \exp\left(-\sqrt{\frac{2 |\theta| }{\sigma_R^2}}\, x\right),\q\mbox{as } x\to \infty,
\end{equation}
where $\theta^+=\max(\theta,0).$
\end{theorem}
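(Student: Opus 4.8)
\emph{Step 1: reduction to one founding particle and a renewal equation.} The plan is to work with the single‑particle hitting function $w^{\eta}(y):=P^{\eta}(M\ge y)$. Under $P^{\theta/n}_n$ the $n$ founding particles spawn independent copies of the one‑particle walk, so $P^{\theta/n}_n(M\ge\sqrt n x)=1-\big(1-w^{\theta/n}(\sqrt n x)\big)^n$; it therefore suffices to prove that $v_n(x):=n\,w^{\theta/n}(\sqrt n x)\to\psi(x)$ uniformly on $[x_0,\infty)$, after which \eqref{eq:conv_M} follows from $|n\log(1-v_n/n)+v_n|\le v_n^2/n$ together with the boundedness of $\psi$ on $[x_0,\infty)$. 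A first‑step decomposition (the founding particle jumps to $z$ with probability $a_z$, then has $i$ offspring with probability $p^{\eta}_i$), with $g^{\eta}(s)=\sum_i p^{\eta}_i s^i$, gives
\[
1-w^{\eta}(y)=\sum_z a_z\,g^{\eta}\big(1-w^{\eta}(y-z)\big),\quad y\ge1,\qquad w^{\eta}\equiv1\ \text{on}\ \{y\le0\},
\]
with $w^{\eta}$ non‑increasing and $w^{\eta}(+\infty)=\varrho(\eta)$, the survival probability of the offspring chain. Because $g^{\eta}$ is increasing this equation obeys a comparison principle, clean when $\eta\le0$ (then $(g^{\eta})'\le1$). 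Expanding $1-g^{\eta}(1-s)=(1+\eta)s-\tfrac12(\sigma^{2}(\eta)+\eta+\eta^{2})s^{2}+O(\gamma(\eta)s^{3})$ and using $\sum_z a_z=1$, $\sum_z z a_z=0$, $\sum_z z^{2}a_z=\sigma_R^{2}$, one finds that with $\eta=\theta/n$ and $w^{\eta}(y)=\tfrac1n u(y/\sqrt n)$ the equation becomes $\tfrac{\sigma_R^{2}}{2}u''+\theta u-\tfrac{\sigma^{2}}{2}u^{2}=o(1)$ on $(0,\infty)$, the error being $O(n^{-1/2})$ on compact subsets; the limiting ODE is \eqref{psi-eq}, whose unique positive $C^{2}((0,\infty))$ solution with $u(0^{+})=\infty$, $u(\infty)=2\theta^{+}/\sigma^{2}$ is \eqref{eq:psi_formula} (existence is a direct check, uniqueness follows by the argument behind Proposition~\ref{prop-uniq}).

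\emph{Step 2: a priori bounds by sub/super‑solutions.} For small $\delta>0$ let $\psi^{\pm}_{\delta}$ be the explicit solution of \eqref{psi-eq} with $\theta$ replaced by $\theta\pm\delta$; these are ordered around $\psi$, blow up like $6\sigma_R^{2}/(\sigma^{2}x^{2})$ at $0$, are flat at $\infty$, and tend to $\psi$ uniformly on $[x_0,\infty)$ as $\delta\downarrow0$. Since $\psi^{+}_{\delta}$ over‑solves \eqref{psi-eq} with a strictly positive cushion of order $\delta\psi^{+}_{\delta}$, the function $y\mapsto\tfrac1n\psi^{+}_{\delta}(y/\sqrt n)$ is, for $n$ large, a super‑solution of the renewal equation on the region where this cushion beats the $O(n^{-1/2})$ discretisation error; near $y=0$ one uses instead that $\tfrac1n\psi^{+}_{\delta}(y/\sqrt n)\ge1\ge w^{\eta}(y)$ for all $y$ below a fixed constant, which also closes the comparison at the boundary $\{y\le0\}$. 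The comparison principle then yields: the uniform upper bound $\sup_n\sup_{x\ge x_0}v_n(x)<\infty$; tail control $\limsup_n\sup_{x\ge X}v_n(x)\le\sup_{x\ge X}\psi^{+}_{\delta}(x)\to2\theta^{+}/\sigma^{2}$ as $X\to\infty$; and, using $\psi^{-}_{\delta}$ as a sub‑solution, a lower bound on $v_n$ near $0$ that blows up, forcing every subsequential limit to have boundary value $\infty$ at $0$ rather than being constant. The discrete equation of Step~1 together with these bounds also gives equicontinuity of $(v_n)$ on compact subsets of $(0,\infty)$.

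\emph{Step 3: identification of the limit, the supercritical case, and the formulas.} By Arzelà–Ascoli, $(v_n)$ is precompact in $C([x_0,X])$ for each $X$; passing to the limit in the discrete equation, any subsequential limit solves \eqref{psi-eq} on $(0,\infty)$ with $u(0^{+})=\infty$ and $u(\infty)=2\theta^{+}/\sigma^{2}$, hence equals $\psi$ by uniqueness, so $v_n\to\psi$ locally uniformly on $(0,\infty)$; combined with the tail control of Step~2 this is uniform on $[x_0,\infty)$, which gives \eqref{eq:conv_M}. When $\theta>0$ the comparison principle is not directly available (then $(g^{\theta/n})'(1)>1$), so one first splits $w^{\theta/n}(y)=\varrho(\theta/n)+\widetilde w^{\theta/n}(y)$ with $\widetilde w^{\theta/n}(y)=P^{\theta/n}(M\ge y,\ \text{extinction})$; the latter is the hitting function of the walk \emph{conditioned to die out}, which is a near‑critical \emph{subcritical} branching random walk with mean offspring $1-\theta/n+O(n^{-2})$ satisfying \eqref{assum-sig-gam} with the same $\sigma$, so the $\theta<0$ analysis gives $n\widetilde w^{\theta/n}(\sqrt n\cdot)\to\psi(\cdot)-2\theta/\sigma^{2}$; since $n\varrho(\theta/n)\to2\theta/\sigma^{2}$ by standard near‑critical Galton–Watson asymptotics (using \eqref{assum-sig-gam}), again $v_n\to\psi$. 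Finally \eqref{eq:psi_formula} is obtained from the first integral of the autonomous equation \eqref{psi-eq} (multiply by $\psi'$, integrate, and use $\psi\to2\theta^{+}/\sigma^{2}$, $\psi'\to0$ at $\infty$, so that the resulting cubic has a double root at $2\theta^{+}/\sigma^{2}$) followed by separation of variables, and \eqref{psi-asymptotic} from linearising \eqref{psi-eq} about $2\theta^{+}/\sigma^{2}$, whose decaying eigenmode is $\exp(-\sqrt{2|\theta|/\sigma_R^{2}}\,x)$, matching the constant against the exact solution.

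\emph{Main obstacle.} The crux is Step~2: showing that $\tfrac1n\psi^{\pm}_{\delta}(\cdot/\sqrt n)$ are genuine sub/super‑solutions of the renewal equation \emph{on the whole half‑line, uniformly in $n$}. Near $\infty$ the profiles are nearly flat, so the fixed $\delta$‑cushion must be shown to outweigh the discretisation error there; near $0$ they blow up, so the finite range of the walk and the discrete second difference must be controlled against functions with unbounded derivatives, and the comparison closed at $\{y\le0\}$. Handling both ends at once is exactly what upgrades convergence on compacts to uniform convergence on $[x_0,\infty)$ and pins down the boundary layer at $0$; this is where the bounded‑support assumption on $F_{RW}$ is essential.
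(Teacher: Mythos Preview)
Your strategy is sound and would work, but it takes a genuinely different route from the paper. The paper does \emph{not} attack $w_\infty^{\theta/n}$ directly via discrete sub/super-solutions and a comparison principle. Instead, it leverages the already-established convergence $n w_{nt}^{\theta/n}(\sqrt n\,x)\to\phi(t,x)$ from Theorem~\ref{theorem-weak-con}: for $\theta\le 0$ one has the sandwich
\[
n w_{nt}^{\theta/n}(\sqrt n\,x)\ \le\ n w_\infty^{\theta/n}(\sqrt n\,x)\ \le\ n w_{nt}^{\theta/n}(\sqrt n\,x)+nP_1^{\theta/n}(X\text{ survives to }[nt])=n w_{nt}^{\theta/n}(\sqrt n\,x)+O(1/t),
\]
so $n w_\infty^{\theta/n}(\sqrt n\,x)\to\psi(x):=\lim_{t\to\infty}\phi(t,x)$; uniformity on $[x_0,\infty)$ comes from Dini plus the $O(1/x^2)$ tail bound of \cite{LS15}. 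The ODE for $\psi$ is then obtained from a discrete Feynman--Kac formula for $w_\infty^{\theta/n}$ (the time-stationary analogue of Lemma~\ref{lemma-disc-f-c-upto}), passing to the limit and invoking Dynkin's theorem. The supercritical case, the uniqueness argument, and the explicit integration of the autonomous ODE are handled exactly as you propose.

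What each approach buys: the paper's route is economical---it recycles the entire tightness/identification machinery of Sections~\ref{sec-conv}--\ref{section-pde-lim} and avoids building discrete barriers from scratch. Your route is more self-contained and would give the theorem without first proving Theorem~\ref{theorem-weak-con}, but the price is precisely the ``main obstacle'' you flag: verifying that $\tfrac1n\psi^{\pm}_\delta(\cdot/\sqrt n)$ are genuine discrete sub/super-solutions uniformly on the whole half-line. This is doable (the $\delta$-cushion $\delta\psi^{\pm}_\delta$ does dominate the $O(n^{-1/2})$ error where $\psi^{\pm}_\delta$ is bounded away from zero, and near $y=0$ the trivial bound $\tfrac1n\psi^{+}_\delta(y/\sqrt n)\ge 1$ closes the comparison), but it is real work that you have sketched rather than carried out; the paper simply sidesteps it. Note also that for the \emph{lower} barrier (sub-solution) the iteration-from-below argument you implicitly rely on does not directly apply---one needs either an iteration from above or the Feynman--Kac representation, which is closer to what the paper does.
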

\begin{remark}
When $\th=0$, the convergence \eqref{eq:conv_M} and ODE \eqref{psi-eq} are also true; see Corollary 2 and Proposition 23 (and its proof) in \cite{LS15}.
The tail behavior of~$M$, however, is completely different according to whether $\th=0$ or not. When~$\th=0,$ by Corollary 2 in \cite{LS15},
the tail distribution of $M/\sqrt{n}$ decays at a rate of~$1/x^2$. {In fact, by solving \eqref{psi-eq} with $\theta=0$ along the same lines as in the proof of Theorem \ref{theorem-u-inf}, one gets that $$\psi(x) =  \frac{6\sigma_R^2}{\sigma^2}\frac{1}{x^{2}}, \quad x>0.$$ A theorem by Dynkin (see e.g. Theorem 8.6 in \cite{ether-book}) links \eqref{psi-eq} with $\theta =0$ to the support of super Brownian motion.} In contrast, in the sub-near-critical case (or the super-near-critical case and conditioned on extinction),  the tail distribution of $M/\sqrt{n}$ is similar to a Gumbel distribution.
\end{remark}

\begin{remark}\label{rmk:diff_asymptoticswith_LS}
In \cite{LS15}, the convergence \eqref{eq:conv_M}
was established by first proving the convergence of a  complicated object $\lim_{x\to\infty} w_\infty(x + y/w_\infty(x))/w_\infty(x)$, where $w_\infty(x)=P_1(M>x)$ for any $x$;
 see equation (23) therein for the precise statement.
In this paper, we prove the convergence of $P^{\theta/n}_n(M\geq \sqrt{n}x)$ directly.
\end{remark}

\paragraph{Organization of the paper:}
The rest of this paper is organized as follows. In Section \ref{section-F-K-w},  we establish a discrete Feynman-Kac formula for the tail distribution of the maximal displacement, which will be used in Section~\ref{sec-conv} to establish the tightness of $(nw_{n\cdot}^{\th/n}(\sqrt{n}\cdot))$, where $w_{k}^{\th/n}(x)=P_{1}^{\th/n} (M_{k}\geq x)$ for each~$k$ and $x$.  In Section \ref{section-pde-lim}, we identify the limit as a unique solution to a nonlinear parabolic PDE with infinite boundary condition, based on which we establish Theorem~\ref{theorem-weak-con} and Corollary \ref{corol-u-x}. In Section \ref{sec:travel_speed} we prove  Theorem~\ref{theorem-speed}, and in Section \ref{sec:Minfty} we prove Theorem \ref{theorem-u-inf}.

\section{A Discrete Feynman-Kac Formula } \label{section-F-K-w}
Recall that $F^{\th/n}_{B}$ stands for an offspring distribution with mean $1+\th/n$. Denote by $f^{\th/n}$ the probability generating function of $F^{\th/n}_{B}$.
Define
\bq \label{Q-\n}
Q^{\th/n}(s)=1-f^{\th/n}(1-s), \q \mbox{for all } s\in[0,1].
\eq
The function is  increasing and concave with $Q^{\th/n}(0)=0, (Q^{\th/n})'(0)=1+\th/n$ and $Q^{\th/n}(1) = 1-p_0^{\th/n} < 1$.
We also define
\be \label{w-def}
w_{k}^{\th/n}(y)=P_{1}^{\th/n} (M_{k}\geq y), \  y \in \zz{Z}, \ k \in\zz{Z}_{\geq 0}.
\ee
The derivation of the Feynman-Kac formula uses ideas from Section 2.2 in~\cite{LS15}.
The following lemma (see Lemma~4.1 in \cite{NZ17}) gives a convolution equation for~${w}_k^{\th/n}(\cdot)$ based on $Q^{\th/n}(\cdot)$ {and the random walk distribution $F_{RW} = {\{a_y\}_{y\in\zz{Z}}}$. It is obtained by conditioning on the first generation, in which a single particle first jumps according to the step distribution $ \{a_{k}\}$ and then reproduces according to $F_B^{{\theta/n}}$, which results in \hbox{i.i.d.} subtrees.

\begin{lemma} \label{2lamma-v-1}
For all $  k\geq 1$,
\[
 w^{\th/n}_{k}(x)=\sum_{y\in \zz{Z}}a_{y} Q^{\th/n}\big(   w^{\th/n}_{k-1}(x-y) \big).
 \]
\end{lemma}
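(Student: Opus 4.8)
The plan is to condition on the history of the single initial particle in its first generation and exploit the branching property. Under $P_1^{\th/n}$ the process starts with one particle at the origin. In generation $1$ this particle first performs one step of the random walk, landing at a site $y\in\zz Z$ with probability $a_y$, and then reproduces, producing $j$ offspring with probability $p_j^{\th/n}$. Conditionally on the particle having moved to $y$ and having $j$ children, the $j$ subtrees rooted at the $j$ children are i.i.d.\ copies of a branching random walk started by one particle at $y$, evolving for $k-1$ further generations; and $M_k\geq x$ for the whole tree if and only if at least one of these $j$ subtrees reaches height $\geq x$ within $k-1$ generations. Writing $w_{k-1}^{\th/n}(x-y)=P_1^{\th/n}(M_{k-1}\geq x-y)$ for the probability that a subtree started at $y$ reaches $x$ in $k-1$ generations (by spatial homogeneity this depends only on $x-y$), the probability that \emph{none} of the $j$ i.i.d.\ subtrees reaches $x$ is $\bigl(1-w_{k-1}^{\th/n}(x-y)\bigr)^{j}$.

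Carrying out the conditioning, I would write
\[
w_k^{\th/n}(x)=\sum_{y\in\zz Z}a_y\sum_{j\geq 0}p_j^{\th/n}\Bigl(1-\bigl(1-w_{k-1}^{\th/n}(x-y)\bigr)^{j}\Bigr)
=\sum_{y\in\zz Z}a_y\Bigl(1-f^{\th/n}\bigl(1-w_{k-1}^{\th/n}(x-y)\bigr)\Bigr),
\]
where $f^{\th/n}(s)=\sum_{j\geq 0}p_j^{\th/n}s^{j}$ is the probability generating function of $F_B^{\th/n}$. Recognizing the definition $Q^{\th/n}(s)=1-f^{\th/n}(1-s)$ from \eqref{Q-\n}, the inner expression equals $Q^{\th/n}\bigl(w_{k-1}^{\th/n}(x-y)\bigr)$, which gives the claimed identity. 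The base of the induction ($k=1$) is handled the same way: with $w_0^{\th/n}(z)=\indic\{z\le 0\}$, one recovers $w_1^{\th/n}(x)=\sum_{y\ge x}a_y\,Q^{\th/n}(1)+\sum_{y<x}a_y\,Q^{\th/n}(0)$, consistent with the formula, although for the statement as given one only needs $k\ge1$ with $w_{k-1}^{\th/n}$ already defined.

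I expect the only genuinely delicate point to be the justification of the decomposition into i.i.d.\ subtrees, i.e.\ a careful statement of the branching property for this discrete "jump-then-reproduce" model, together with the (harmless) interchange of the sums over $y$ and $j$ — both are legitimate since the summands are nonnegative and bounded by the probabilities $a_y$ and $p_j^{\th/n}$, which sum to one. Since the same conditioning argument already appears in \cite{LS15} (Section 2.2) and \cite{NZ17} (Lemma 4.1), I would simply cite those and keep the verification brief.
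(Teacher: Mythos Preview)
Your proposal is correct and follows exactly the approach indicated in the paper: the paper does not give a detailed proof but simply cites Lemma~4.1 in \cite{NZ17} and notes that the identity follows by conditioning on the first generation, where the initial particle jumps and then reproduces into i.i.d.\ subtrees. Your computation via $1-f^{\th/n}(1-w_{k-1}^{\th/n}(x-y))=Q^{\th/n}(w_{k-1}^{\th/n}(x-y))$ is precisely this argument written out.
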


Recall that $F^{\th/n}_{B}$ has a bounded third moment, hence by the Taylor expansion of~$Q^{\th/n}(\cdot)$ at $s=0$ we have
\bq \label{Q-\n1}
 Q^{\th/n}(s) = \Big(1+\frac{\th}{n}\Big)s -\frac{1}{2}{\wt\sigma^2}(\th/n) s^{2}+O(s^{3}),
\eq
where
\be \label{tilde-sigma}
\wt  \sigma^2(\th/n) =  \sigma^{2}(\th/n) +\Big(1+\frac{\th}{n}\Big)^{2}-\left(1+\frac{\th}{n}\right).
\ee
Define
\bq \label{2h-\n}
h^{\th/n}(s)=\Big(1+\frac{\th}{n}\Big)s- Q^{\th/n}(s) = \frac{1}{2}{\wt\sigma^2}(\th/n)  s^{2}+O(s^{3}),
\eq
and
\bq \label{2H-def1}
H^{\th/n}(s)=\frac{h^{\th/n}(s)}{(1+\th/n)s}=\frac{\wt\sigma^2(\th/n) }{2(1+\th/n)} s+O(s^{2}).
\eq
Lemma \ref{2lamma-v-1} can be rewritten as the following, which is more convenient for our purpose.
\begin{lemma}  \label{2lemma-v-id}
For all integers $  x\geq 1$ and $ k \geq 1$,
\[
 w^{\th/n}_{k}(x)=\Big(1+\frac{\th}{n}\Big)\sum_{y \in \zz{Z}}a_{y}  w^{\th/n}_{k-1}(x-y)-\sum_{y \in \zz{Z}}a_{y}h^{\th/n}\big( w^{\th/n}_{k-1}(x-y)\big).
\]
\end{lemma}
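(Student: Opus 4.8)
The plan is to derive Lemma~\ref{2lemma-v-id} directly from the convolution equation of Lemma~\ref{2lamma-v-1} by substituting the definition of $h^{\th/n}$; no new idea is needed, so this is really an algebraic reformulation rather than a genuinely new statement.

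First I would record the elementary identity coming from \eqref{2h-\n}: for every $s\in[0,1]$,
\[
Q^{\th/n}(s) = \Big(1 + \frac{\th}{n}\Big)s - h^{\th/n}(s),
\]
which is just the defining relation $h^{\th/n}(s) = (1+\th/n)s - Q^{\th/n}(s)$ solved for $Q^{\th/n}(s)$. The only point worth flagging is the domain: $Q^{\th/n}$, and hence $h^{\th/n}$, is defined on $[0,1]$, and that is exactly the set of values we will plug in.

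Next, I would fix integers $x\geq 1$ and $k\geq 1$ and apply Lemma~\ref{2lamma-v-1} to write $w^{\th/n}_k(x) = \sum_{y\in\zz{Z}} a_y\, Q^{\th/n}\big(w^{\th/n}_{k-1}(x-y)\big)$. For each $y$ the argument $w^{\th/n}_{k-1}(x-y) = P_1^{\th/n}(M_{k-1}\geq x-y)$ is a probability, so it lies in $[0,1]$ and the identity for $Q^{\th/n}$ applies with $s = w^{\th/n}_{k-1}(x-y)$; moreover $F_{RW}$ has finite range, so $a_y=0$ for $|y|>R$ and the sum is finite and may be split term by term. Substituting and separating the two finite sums gives precisely the claimed identity
\[
w^{\th/n}_k(x) = \Big(1 + \frac{\th}{n}\Big)\sum_{y\in\zz{Z}} a_y\, w^{\th/n}_{k-1}(x-y) - \sum_{y\in\zz{Z}} a_y\, h^{\th/n}\big(w^{\th/n}_{k-1}(x-y)\big).
\]

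There is no real obstacle here: the ``hard part'', such as it is, is only the bookkeeping just mentioned (the arguments lie in the domain $[0,1]$ of $Q^{\th/n}$ and $h^{\th/n}$, and finiteness of the range of $F_{RW}$ legitimizes splitting the sum). The reason to record the lemma in this shape is structural: it separates the linear transition term $(1+\th/n)\sum_y a_y\, w^{\th/n}_{k-1}(x-y)$ --- a random-walk step scaled by the mean number of offspring --- from the small nonlinear correction $\sum_y a_y\, h^{\th/n}\big(w^{\th/n}_{k-1}(x-y)\big)$, where $h^{\th/n}(s) = \frac{1}{2}\wt\sigma^2(\th/n)s^2 + O(s^3)$ by \eqref{2h-\n}. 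This is the form that will be iterated to produce the discrete Feynman--Kac / Duhamel representation used later in the section.
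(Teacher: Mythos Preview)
Your proof is correct and is exactly the paper's approach: the paper simply states that Lemma~\ref{2lemma-v-id} is Lemma~\ref{2lamma-v-1} rewritten via the definition of $h^{\th/n}$, and your argument spells out this substitution in full. The extra bookkeeping you include (arguments in $[0,1]$, finite range of $F_{RW}$) is fine but not strictly needed, since the sums are over nonnegative terms anyway.
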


We will also need the following result on the boundedness and monotonicity of $H^{\th/n}$ (see Lemma 4.3 in \cite{NZ17}):
\be \label{H-bnd}
0\leq H^{\th/n}(s) \leq \frac{\th/n+p^{\th/n}_{0}}{1+\th/n} \leq 1, \quad \textrm{for all } s\in [0,1].
\ee
We denote by $\{ \W_{k}\}_{n \geq 0}$ a random walk on $\zz{Z}$ with the following law:
\be \label{reflected-rw}
P(\W_{k+1}-\W_{k}=y\ |\ \W_{k}, \W_{k-1},...)=a_{-y}, \ y\in \zz{Z};
\ee
in other words, $\{\W_{k}\}_{k \geq 0}$ is a reflection of $W$, the random walk associated with our branching system.
We use $P_{x}$ and $E_{x}$ to denote the probability measure and expectation of $\{\W_{k}\}_{k \geq 0}$ with $\W_{0}=~x$, and omit the {subscript} when $x=0$ (and when there is no confusion). {Moreover, to improve readability, we often abbreviate the notation $E_{[ \sqrt{n}x ]}$ to  $E_{ \sqrt{n}x}$, $\W_{[nt]}$ to $\W_{ nt }$, $w_{[nt]}^{\th/n_{}}$ to $w_{nt}^{\th/n_{}}$, etc.
}
We also denote by $\mathcal{F}^{\W}=(\mathcal{F}^{\W}_{k})_{k\geq 0}$ the natural filtration of $\{\W_{k}\}_{k \geq 0}$. \medskip \\

For any $ 0 \leq x \leq y<\infty $, define the stopping times
\be  \label{tau-y-ex}
\bar \tau_{y} =\min\{k\geq 0: \W_{k}\leq y\}, \q \tau_{y} =\min\{k\geq 0: \W_{k}\geq y\},
\ee
and
$$
\bar \tau_{x,y} = \bar \tau_{x} \wedge  \tau_{y}, \q \bar \tau:=\bar \tau_{0}.
$$

Further define for each $m\geq 0$ and $0\leq k \leq m$,
\be \label{2mrt-z}
\aligned
Y^{(n)}_{k}
=&\Big(1+\frac{\th}{n}\Big)^{k} w_{m-k}^{\th/n}(\W_{k})\mathds{1}_{\{\bar \tau\geq k\}} \prod_{j=1}^{k}\big[1-H^{\th/n}( w_{m-j}^{\th/n}\big(\W_{j})\big)\big] \\
& +\sum_{i=1}^{k-1}\Big(1+\frac{\th}{n}\Big)^{i-1}(1-p^{\th/n}_{0})\mathds{1}_{\{\bar \tau=i \}}\prod_{j=1}^{i-1}\big[1-H^{\th/n}( w^{\th/n}_{m-j}\big(\W_{j})\big)\big],
\endaligned
\ee
where we use the convention that
for any $k\leq 0,$ $\sum_{j=1}^{k}=0$ and $\prod_{j=1}^{k}=1$. In particular, ${Y^{(n)}_0}=w^{\th/n}_{m}(\W_0)$.

Similarly to Lemma 4.4 in \cite{NZ17}, we have that $Y^{}=\{Y^{(n)}_{k}\}_{0\leq k\leq {m}}$ is a martingale. To recall why this holds, define
\bd
\aligned
Y^{(n),1}_{k}
:=&\Big(1+\frac{\th}{n}\Big)^{k} w_{m-k}^{\th/n}(\W_{k})\mathds{1}_{\{\bar \tau\geq k\}} \prod_{j=1}^{k}\big[1-H^{\th/n}( w_{m-j}^{\th/n}\big(\W_{j})\big)\big] \\
Y^{(n),2}_{k} & =\sum_{i=1}^{k-1}\Big(1+\frac{\th}{n}\Big)^{i-1}(1-p^{\th/n}_{0})\mathds{1}_{\{\bar \tau=i \}}\prod_{j=1}^{i-1}\big[1-H^{\th/n}( w^{\th/n}_{m-j}\big(\W_{j})\big)\big],
\endaligned
\ed
Note that $Y^{(n),2}_{k+1} \in \mathcal F^{\W}_k$, so $Y$ is a martingale if and only if
\bd
E(Y^{(n),1}_{k+1}|\mathcal F^{\W}_k) = Y^{(n),1}_{k} +Y^{(n),2}_{k} -Y^{(n),2}_{k+1},
\ed
which is verified by using Lemma \ref{2lemma-v-id} and considering $\bar \tau<k$, $\bar \tau=k$ and $\bar \tau > k$, respectively.

\begin{lemma}  \label{2lemma-mart-z}
If $\W_0=x\geq 0$, then $Y^{}$ is a martingale with respect to $\mathcal F^{\W}$.
\end{lemma}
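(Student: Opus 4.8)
The plan is to fill in the computation sketched immediately before the statement, following the three‑case analysis indicated there. Write $Y^{(n)}=Y^{(n),1}+Y^{(n),2}$. Both pieces are $\mathcal{F}^{\W}$‑adapted, and they are bounded, hence integrable: by \eqref{H-bnd} every factor $1-H^{\th/n}(w^{\th/n}_{m-j}(\W_j))$ lies in $[0,1]$, each $w^{\th/n}_{m-k}(\W_k)\in[0,1]$, and for fixed $m$ and $n$ the prefactors $(1+\th/n)^{k}$, $0\le k\le m$, are bounded. Moreover $Y^{(n),2}_{k+1}$ is $\mathcal{F}^{\W}_k$‑measurable, since it depends on $\W_0,\dots,\W_k$ only through the events $\{\bar\tau=i\}$ with $i\le k$ and the products $\prod_{j=1}^{i-1}$ with $i-1\le k-1$. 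Consequently, for $0\le k\le m-1$, the identity $E(Y^{(n)}_{k+1}\mid\mathcal{F}^{\W}_k)=Y^{(n)}_k$ is equivalent to
$$E\big(Y^{(n),1}_{k+1}\mid\mathcal{F}^{\W}_k\big)=Y^{(n),1}_k+Y^{(n),2}_k-Y^{(n),2}_{k+1},$$
which I would prove by splitting the (manifestly $\mathcal{F}^{\W}_k$‑measurable) sample space into $\{\bar\tau<k\}$, $\{\bar\tau=k\}$ and $\{\bar\tau>k\}$.

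On $\{\bar\tau<k\}$ the indicators $\mathds{1}_{\{\bar\tau\ge k\}}$ and $\mathds{1}_{\{\bar\tau\ge k+1\}}$ both vanish, so $Y^{(n),1}_k=Y^{(n),1}_{k+1}=0$, while the single additional summand of $Y^{(n),2}_{k+1}$ carries $\mathds{1}_{\{\bar\tau=k\}}=0$, so $Y^{(n),2}_{k+1}=Y^{(n),2}_k$; the identity reads $0=0$. On $\{\bar\tau=k\}$ we have $\W_k\le 0$, hence $w^{\th/n}_{m-k}(\W_k)=1$; combining $Q^{\th/n}(1)=1-p^{\th/n}_0$ with the definitions \eqref{2h-\n} and \eqref{2H-def1} yields $1-H^{\th/n}(1)=(1-p^{\th/n}_0)/(1+\th/n)$, from which $Y^{(n),1}_k$ is seen to equal precisely the one new term $(1+\th/n)^{k-1}(1-p^{\th/n}_0)\prod_{j=1}^{k-1}[1-H^{\th/n}(w^{\th/n}_{m-j}(\W_j))]$ of $Y^{(n),2}_{k+1}$; since also $Y^{(n),1}_{k+1}=0$ there, both sides of the identity equal $0$.

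The only substantive case is $\{\bar\tau>k\}$, which I expect to be the main (if still routine) step. There $Y^{(n),2}_{k+1}=Y^{(n),2}_k$ and $\mathds{1}_{\{\bar\tau\ge k+1\}}=1$, so on this event
$$Y^{(n),1}_{k+1}=\Big(1+\tfrac{\th}{n}\Big)^{k+1}\Big(\prod_{j=1}^{k}\big[1-H^{\th/n}(w^{\th/n}_{m-j}(\W_j))\big]\Big)w^{\th/n}_{m-k-1}(\W_{k+1})\big[1-H^{\th/n}(w^{\th/n}_{m-k-1}(\W_{k+1}))\big],$$
and the product together with $\mathds{1}_{\{\bar\tau>k\}}$ is $\mathcal{F}^{\W}_k$‑measurable. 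By \eqref{reflected-rw} the conditional law of $\W_{k+1}$ given $\mathcal{F}^{\W}_k$ puts mass $a_z$ on $\W_k-z$; taking the conditional expectation and writing $h^{\th/n}(s)=(1+\th/n)\,s\,H^{\th/n}(s)$, the remaining sum equals $(1+\th/n)^{-1}$ times the right‑hand side of the identity in Lemma \ref{2lemma-v-id} evaluated at $x=\W_k$ with time index $m-k$ --- legitimate since $\W_k\ge 1$ on $\{\bar\tau>k\}$ and $m-k\ge 1$ for $k\le m-1$ --- and therefore equals $(1+\th/n)^{-1}w^{\th/n}_{m-k}(\W_k)$. This collapses the conditional expectation to $(1+\th/n)^{k}\big(\prod_{j=1}^{k}[1-H^{\th/n}(w^{\th/n}_{m-j}(\W_j))]\big)w^{\th/n}_{m-k}(\W_k)=Y^{(n),1}_k$ on $\{\bar\tau>k\}$, which matches the right‑hand side. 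Assembling the three cases yields the displayed identity, hence, together with the reduction in the first paragraph, $Y^{(n)}$ is a martingale with respect to $\mathcal{F}^{\W}$. The delicate point is tracking the indicator $\mathds{1}_{\{\bar\tau\ge k+1\}}$ inside $Y^{(n),1}_{k+1}$: it equals $1$ on $\{\bar\tau>k\}$ irrespective of where $\W_{k+1}$ lands, and the contribution of $\{\W_{k+1}\le 0\}$ --- an overshoot of the reflected walk below the origin --- is precisely what is absorbed by the telescoping of $Y^{(n),2}$ one step later, which is also why the arithmetic identity $1-H^{\th/n}(1)=(1-p^{\th/n}_0)/(1+\th/n)$ is exactly what makes the bookkeeping close.
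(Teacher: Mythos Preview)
Your proof is correct and follows exactly the approach sketched in the paper just before the lemma: reduce to showing $E(Y^{(n),1}_{k+1}\mid\mathcal{F}^{\W}_k)=Y^{(n),1}_k+Y^{(n),2}_k-Y^{(n),2}_{k+1}$, then split into the three cases $\bar\tau<k$, $\bar\tau=k$, $\bar\tau>k$ and invoke Lemma~\ref{2lemma-v-id} in the last case. The arithmetic details you supply (in particular the identity $1-H^{\th/n}(1)=(1-p_0^{\th/n})/(1+\th/n)$ on $\{\bar\tau=k\}$ and the collapse via Lemma~\ref{2lemma-v-id} on $\{\bar\tau>k\}$, valid since $\W_k\ge 1$ there) are right.
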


The following lemma gives  a discrete Feynman-Kac formula.
\begin{lemma}\label{lemma-disc-f-c-upto}
For any $m \in\zz{N}$, $0\leq k\leq m$ and  $ 0 \leq  y\leq x< z\leq \infty$, we have
\begin{itemize}
\item[(i)]
\bd
\begin{aligned}
w^{\th/n}_{m}(x)
=&E_{x}\Big(\Big(1+\frac{\th}{n}\Big)^{\bar\tau_{y,z}\wedge (m-k)} w_{m-(m-k)\wedge \bar \tau_{y,z}}^{\th/n}(\W_{\bar\tau_{y,z}\wedge (m-n)})  \\
& \qquad \times \prod_{j=1}^{\bar\tau_{y,z}\wedge (m-k)}\big[1-H^{\th/n}\big( w_{m-j}^{\th/n}(\W_{j})\big)\big] \Big).
\end{aligned}
\ed
\item[(ii)]
\bd
\begin{aligned}
w^{\th/n}_{m}(x)
=&E_{x}\Big(\Big(1+\frac{\th}{n}\Big)^{\bar\tau_{y,z}\wedge m} w_{m- \bar \tau_{y,z}\wedge m}^{\th/n}(\W_{\bar\tau_{y,z}\wedge m})  \\
& \qquad \times \prod_{j=1}^{\bar\tau_{y,z}\wedge m}\big[1-H^{\th/n}\big( w_{m-j}^{\th/n}(\W_{j})\big)\big] \Big).
\end{aligned}
\ed
\end{itemize}
\end{lemma}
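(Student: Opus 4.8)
The plan is to obtain both identities by optional stopping applied to the martingale $Y^{(n)}=\{Y^{(n)}_k\}_{0\le k\le m}$ of Lemma~\ref{2lemma-mart-z}. Part~(ii) is exactly the case $k=0$ of part~(i) (and the argument of $\W$ in~(i) is meant to read $\W_{\bar\tau_{y,z}\wedge(m-k)}$), so it suffices to prove~(i). Fix $m\in\zz{N}$, $0\le k\le m$ and $0\le y\le x<z\le\infty$, and set $\sigma:=\bar\tau_{y,z}\wedge(m-k)$, which is an $\mathcal F^{\W}$-stopping time bounded by $m$. With the reflected walk started at $\W_0=x$, the identity ${Y^{(n)}_0}=w^{\th/n}_m(\W_0)$ recorded before Lemma~\ref{2lemma-mart-z} gives $E_xY^{(n)}_0=w^{\th/n}_m(x)$, so the optional stopping theorem for the martingale $Y^{(n)}$ at the bounded time $\sigma$ yields
\[
w^{\th/n}_m(x)=E_x\big(Y^{(n)}_{\sigma}\big).
\]

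The remaining step is to identify $Y^{(n)}_\sigma$ with the integrand in~(i), and the crucial observation is that $\sigma\le\bar\tau$ everywhere. Indeed, since $y\ge0$ we have $\{\W_j\le0\}\subseteq\{\W_j\le y\}$ for every $j$, hence $\bar\tau_y\le\bar\tau_0=\bar\tau$, and therefore $\sigma\le\bar\tau_{y,z}\le\bar\tau_y\le\bar\tau$. Consequently the indicator $\mathds{1}_{\{\bar\tau\ge\sigma\}}$ appearing in $Y^{(n),1}_\sigma$ equals $1$, while in
\[
Y^{(n),2}_\sigma=\sum_{i=1}^{\sigma-1}\Big(1+\frac{\th}{n}\Big)^{i-1}(1-p^{\th/n}_0)\,\mathds{1}_{\{\bar\tau=i\}}\prod_{j=1}^{i-1}\big[1-H^{\th/n}\big(w^{\th/n}_{m-j}(\W_j)\big)\big]
\]
every summand has $i\le\sigma-1<\sigma\le\bar\tau$, so $\mathds{1}_{\{\bar\tau=i\}}=0$ and $Y^{(n),2}_\sigma=0$. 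Hence
\[
Y^{(n)}_\sigma=\Big(1+\frac{\th}{n}\Big)^{\sigma}w^{\th/n}_{m-\sigma}(\W_{\sigma})\prod_{j=1}^{\sigma}\big[1-H^{\th/n}\big(w^{\th/n}_{m-j}(\W_j)\big)\big],
\]
and substituting $\sigma=\bar\tau_{y,z}\wedge(m-k)$, $m-\sigma=m-(m-k)\wedge\bar\tau_{y,z}$ into $w^{\th/n}_m(x)=E_x(Y^{(n)}_\sigma)$ gives exactly~(i); setting $k=0$ gives~(ii).

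I do not expect a genuine obstacle: this is essentially a bookkeeping argument around the optional stopping theorem. The only points that need care are that $\sigma$ is a bounded stopping time (so optional stopping applies to $Y^{(n)}$ with no extra integrability hypothesis), the inequality $\bar\tau_{y,z}\le\bar\tau$ — which uses the hypothesis $y\ge0$ and is precisely what makes the martingale $Y^{(n)}$, built around the hitting time $\bar\tau=\bar\tau_0$, usable in conjunction with the exit time $\bar\tau_{y,z}$ — together with the degenerate cases $z=\infty$ (then $\tau_z=\infty$ and $\bar\tau_{y,z}=\bar\tau_y$) and $\sigma=0$ (the empty product equals $1$ and the identity is immediate), all of which are harmless.
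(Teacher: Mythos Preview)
Your proof is correct and follows essentially the same approach as the paper: both note that $\bar\tau_{y,z}\le\bar\tau$ (since $y\ge0$) and then apply the optional stopping theorem to the martingale $\{Y^{(n)}_k\}$ at the bounded stopping times $\bar\tau_{y,z}\wedge(m-k)$ and $\bar\tau_{y,z}\wedge m$. You simply spell out in more detail why the $Y^{(n),2}$ contribution vanishes and the indicator in $Y^{(n),1}$ equals $1$, and you also correctly flag the typo in the statement (the argument of $\W$ in~(i) should be $\W_{\bar\tau_{y,z}\wedge(m-k)}$).
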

\begin{proof}
Note that $\bar \tau_{y,z} \leq \bar\tau$ for every $ 0 \leq  y< z\leq \infty $. The conclusions follow by taking the stopping times $\bar\tau_{y,z}\wedge (m-k)$ and $\bar\tau_{y,z}\wedge m$ and applying the optional stopping theorem to the martingale $\{Y^{(n)}_{k}\}$.

\end{proof}

\section{Tightness } \label{sec-conv}
In this section, we establish the tightness of the function sequence $\left(n w^{\th/n}_{[ n_{} t]}(\sqrt{n} x)\right)_{n\geq 1}$. Because  at $x=0$, $n w^{\th/n}_{[ n_{} t]}(\sqrt{n} x)=n\to\infty$, the sequence of functions  cannot be tight when allowing $(t,x)$ to vary over the whole domain $\{(t,x): t\geq0,x\geq 0\}$. Special treatments are needed to deal with such a singularity.

We start with some exponential bounds on the distribution of the maximum of $W$.

\begin{lemma}  \label{Lemma-uni2} {Let $Y_{1},Y_{2},...$ be \mbox{i.i.d.} random variables with mean $0$ and $P(Y_{1}\geq R) =0$ for some $R$. Let $S_{n}=Y_{1}+...+Y_{n}$.}
 \begin{itemize}
\item[(i)] There exist constants $C_{\ref{Lemma-uni2}},\beta_{\ref{Lemma-uni2}}>0$ such that
\bn  \label{mr21}
 P\Big(\max_{i=0,...,n} |S_{i}|\geq s \sqrt{n} \Big) \leq C_{\ref{Lemma-uni2}}e^{-\beta_{\ref{Lemma-uni2}}s^{2}}, \q \mbox{for all } n \geq 0, \ s>0.
\en
\item[(ii)] There exist constants $C_{\ref{Lemma-uni2}}',\beta'_{\ref{Lemma-uni2}}>0$ such that
 \bn \label{mr24}
 P\Big( \max_{i=0,...,n} S_{i} \leq s\sqrt{n}\Big) \leq C'_{\ref{Lemma-uni2}}e^{-\beta'_{\ref{Lemma-uni2}}/s^{2}}, \q \mbox{for all } n\geq 0, \ s>0.
 \en
\end{itemize}
\end{lemma}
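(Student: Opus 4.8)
The plan is to obtain both bounds from the reflection principle together with a sub-Gaussian tail estimate for a single partial sum $S_n$. For part (i), first I would note that since $|Y_1|\le R'$ for some constant $R'$ (mean zero and bounded above, hence bounded below too, as $E Y_1=0$ forces $P(Y_1\le -c)>0$ only for finite $c$; more simply, a mean-zero variable bounded above by $R$ is automatically bounded below by some $R'$ depending on its law — or one may simply additionally assume bounded support, which is the case in the paper's application), Hoeffding's inequality gives $P(|S_n|\ge s\sqrt n)\le 2e^{-s^2/(2(R')^2)}$ uniformly in $n$. Then, to pass from $|S_n|$ to $\max_{i\le n}|S_i|$, I would apply the reflection principle for mean-zero random walks (equivalently, Lévy's inequality / Etemadi's maximal inequality): $P(\max_{i\le n}|S_i|\ge s\sqrt n)\le 3\max_{i\le n}P(|S_i|\ge s\sqrt n/3)$. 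Since the single-step bound is uniform in the index, this yields $P(\max_{i\le n}|S_i|\ge s\sqrt n)\le 6\,e^{-s^2/(18(R')^2)}$, which is the claimed form with suitable $C_{\ref{Lemma-uni2}},\beta_{\ref{Lemma-uni2}}$; for $s$ bounded away from infinity the statement is trivial by taking $C_{\ref{Lemma-uni2}}$ large, so only large $s$ matters.

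For part (ii), the key observation is that the event $\{\max_{i\le n}S_i\le s\sqrt n\}$ forces the walk to stay below a low barrier for all $n$ steps, and a mean-zero walk of bounded increments cannot do this for long without large probability cost. I would chop $\{0,1,\dots,n\}$ into $\lfloor 1/s^2\rfloor$ (roughly) disjoint blocks each of length about $s^2 n$. On block $j$, the increment $S_{(j+1)\lfloor s^2 n\rfloor}-S_{j\lfloor s^2 n\rfloor}$ is a sum of about $s^2 n$ i.i.d. mean-zero terms, so it exceeds $s\sqrt n=s\sqrt n$ — i.e. exceeds a constant times its own standard deviation $\asymp \sqrt{s^2 n}=s\sqrt n$ — with probability bounded below by some $p_0>0$ uniformly in $n$ and $s$ (this needs a lower bound on the probability that a normalized partial sum exceeds a fixed constant, which follows from the CLT together with the bounded-increment Berry–Esseen bound, or from a direct second-moment/Paley–Zygmund argument). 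On the event $\{\max_{i\le n}S_i\le s\sqrt n\}$, no block increment can be that large while the running max also stays controlled — more carefully, if every block increment were $\le s\sqrt n$ we would be fine, but we need the running maximum small, so I would instead argue that the walk restricted to block endpoints is itself a mean-zero random walk and $\{\max_{i\le n} S_i\le s\sqrt n\}$ is contained in the event that each of the $\asymp 1/s^2$ i.i.d. block increments is $\le s\sqrt n$; since each such increment is $\le s\sqrt n \asymp$ one standard deviation with probability $\le 1-p_0<1$, independence across blocks gives $P(\max_i S_i\le s\sqrt n)\le (1-p_0)^{\lfloor 1/s^2\rfloor}\le C'_{\ref{Lemma-uni2}}e^{-\beta'_{\ref{Lemma-uni2}}/s^2}$.

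I expect the main obstacle to be part (ii), specifically the uniform-in-$n$ lower bound $P\bigl(S_m \ge c\sqrt m\bigr)\ge p_0>0$ that I need for each block (here $m\asymp s^2 n$, and I must handle small $m$, including $m=1$, where the CLT is not yet in force). For small $m$ one handles it by hand using $P(Y_1>0)>0$ and boundedness; for large $m$ the CLT with a bounded-third-moment Berry–Esseen correction gives a clean lower bound. A mild technical point is that the block length $\lfloor s^2 n\rfloor$ may be zero when $s^2 n<1$, i.e. $s<1/\sqrt n$; but in that regime $s\sqrt n<1$, and since $S_i$ is integer-valued (or $S_0=0$), $\{\max_i S_i\le s\sqrt n\}$ forces $\max_i S_i\le 0$, an event of probability at most some $\rho<1$ by mean-zero-ness, so the bound holds trivially there too after enlarging $C'_{\ref{Lemma-uni2}}$. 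Assembling these cases gives the stated exponential-in-$1/s^2$ decay.
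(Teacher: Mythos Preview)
Your treatment of part~(i) is correct and more self-contained than the paper's, which simply cites Corollary~A.2.7 of Lawler--Limic. You are also right to flag that the hypothesis $P(Y_1\ge R)=0$ alone does \emph{not} force $Y_1$ to be bounded below (a mean-zero variable can be bounded above with an unbounded negative tail), so the sub-Gaussian bound genuinely requires the two-sided boundedness that holds in the paper's application.

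For part~(ii) there is a real gap, and in fact the one-sided inequality as written cannot be proved. The containment you assert,
\[
\Big\{\max_{i\le n}S_i\le s\sqrt{n}\Big\}\ \subseteq\ \bigcap_j\big\{S_{jm}-S_{(j-1)m}\le s\sqrt{n}\big\},
\]
is false: on the left-hand event the walk may first plunge to $-K s\sqrt{n}$ and then climb back to $s\sqrt{n}$, producing a block increment of size $(K+1)s\sqrt{n}$ while every partial sum stays below $s\sqrt{n}$. More fundamentally, the bound itself fails. By Donsker's theorem, for fixed $s>0$,
\[
P\Big(\max_{i\le n}S_i\le s\sqrt{n}\Big)\ \xrightarrow[n\to\infty]{}\ P\big(\sigma\max_{t\le 1}B_t\le s\big)=2\Phi(s/\sigma)-1\ \sim\ \frac{2}{\sigma\sqrt{2\pi}}\,s\quad(s\downarrow 0),
\]
which for small $s$ is far larger than any $C'e^{-\beta'/s^2}$; already $n=0$ gives left side~$1$. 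The result the paper cites---equation~(2.51) in Proposition~2.4.5 of Lawler--Limic---is in fact a bound on the \emph{two-sided} exit time $\xi_m=\min\{j:|S_j|\ge m\}$, namely $P(\xi_m>rm^2)\le Ce^{-\beta r}$, which after the substitution $m=s\sqrt{n}$, $r=1/s^2$ reads
\[
P\Big(\max_{i\le n}|S_i|\le s\sqrt{n}\Big)\le C'e^{-\beta'/s^2}.
\]
This two-sided version is almost certainly what was intended, and for it your block argument \emph{does} go through: if $\max_{i\le n}|S_i|\le s\sqrt{n}$ then in each block of length $m\approx s^2n$ the recentred walk satisfies $\max_{i\in\text{block }j}|S_i-S_{(j-1)m}|\le 2s\sqrt{n}=2\sqrt{m}$; these events are independent across the $\lfloor 1/s^2\rfloor$ blocks, and each has probability bounded away from~$1$ by the CLT, yielding the exponential decay.
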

\begin{proof}
(i) is a special case of as Corollary A.2.7 in \cite{Lawler2010}.

(ii) The result follows from equation (2.51) in Proposition 2.4.5 of \cite{Lawler2010}; {see also Exercise 2.7 therein.}  \end{proof}

In the following lemma, we compute the probability that $X^{}$ (under $P_{n}^{\th/n}$) dies out as $n\rr \infty$.
\begin{lemma}  \label{lemma-ext}
Assume that $F^{\theta/n}_{B}$ satisfies (\ref{assum-sig-gam}). When $\th>0$, we have
\bn
\lim_{n\rr\infty}P_{n}^{\th/n}\big(X^{} \textrm{ dies out }\big) = e^{-2\th/\sigma^2}.
\en
\end{lemma}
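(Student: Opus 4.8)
The plan is to reduce the statement to a near-critical Galton--Watson computation, since the random-walk displacements are irrelevant to survival. Starting from $n$ particles at the origin, the system dies out if and only if each of the $n$ independent family trees does, so $P_n^{\th/n}(X\text{ dies out})=q_n^{\,n}$, where $q_n$ is the extinction probability of a single Galton--Watson tree with offspring law $F_B^{\th/n}$. Since the mean $1+\th/n>1$, the tree is supercritical and $q_n\in(0,1)$ is the unique root in $[0,1)$ of $f^{\th/n}(s)=s$. Writing $\eps_n:=1-q_n$ and recalling $Q^{\th/n}(s)=1-f^{\th/n}(1-s)$ together with the functions $h^{\th/n},H^{\th/n}$ from \eqref{2h-\n}--\eqref{2H-def1}, the number $\eps_n\in(0,1)$ is characterized by $Q^{\th/n}(\eps_n)=\eps_n$, equivalently $h^{\th/n}(\eps_n)=\tfrac{\th}{n}\eps_n$, equivalently
\[
H^{\th/n}(\eps_n)=\frac{\th/n}{1+\th/n}.
\]
I will also use that $H^{\th/n}$ is nondecreasing on $[0,1]$ with $H^{\th/n}(0)=0$: this holds because $h^{\th/n}$ is convex (as $Q^{\th/n}$ is concave) and vanishes at $0$, so $s\mapsto h^{\th/n}(s)/s$ is nondecreasing.

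The first step is to show $\eps_n\to0$. By \eqref{Q-\n1} (equivalently \eqref{2h-\n}), Taylor's theorem with Lagrange remainder, and the uniform third-moment bound in \eqref{assum-sig-gam}, there exist $C<\infty$ and $N_0$ such that $\big|h^{\th/n}(s)-\tfrac12{\wt\sigma^2}(\th/n)s^2\big|\le Cs^3$ for all $s\in[0,1]$ and $n\ge N_0$, while ${\wt\sigma^2}(\th/n)\to\sigma^2>0$ by \eqref{tilde-sigma} and \eqref{assum-sig-gam}. Consequently there is $s_*\in(0,1)$ with $h^{\th/n}(s)\ge\tfrac14\sigma^2 s^2$ for $s\le s_*$ and all large $n$, so that for any fixed $s_0\in(0,s_*]$ one has $H^{\th/n}(s_0)\ge\tfrac18\sigma^2 s_0>0$ for all large $n$. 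If $\eps_n\ge s_0$ for infinitely many $n$, then along those $n$ monotonicity of $H^{\th/n}$ gives $\tfrac{\th/n}{1+\th/n}=H^{\th/n}(\eps_n)\ge H^{\th/n}(s_0)\ge\tfrac18\sigma^2 s_0$, which is impossible since $\th/n\to0$; hence $\eps_n<s_0$ for all large $n$, and since $s_0\in(0,s_*]$ is arbitrary, $\eps_n\to0$.

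The second step extracts the precise rate. Using $Q^{\th/n}(\eps_n)=\eps_n$, the uniform expansion above, and $\eps_n\to0$, dividing through by $\eps_n>0$ yields
\[
\frac{\th}{n}=\Bigl(\tfrac12{\wt\sigma^2}(\th/n)+O(\eps_n)\Bigr)\eps_n=\Bigl(\tfrac{\sigma^2}{2}+o(1)\Bigr)\eps_n,
\]
so $n\eps_n\to2\th/\sigma^2$. Finally, since $\eps_n\to0$,
\[
\log P_n^{\th/n}(X\text{ dies out})=n\log(1-\eps_n)=-n\eps_n+O(n\eps_n^2)=-n\eps_n+o(1)\longrightarrow-\frac{2\th}{\sigma^2},
\]
because $n\eps_n^2=(n\eps_n)\eps_n\to0$; this is the claim.

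The step I expect to be the main obstacle is the first one: showing that the near-critical trees become extinct with probability tending to one ($\eps_n\to0$), together with securing the control of the cubic Taylor remainder that is uniform in $n$ — precisely what the uniform third-moment hypothesis in \eqref{assum-sig-gam} provides. Once these are in place, the rate computation in the second step is elementary. (One could instead quote the classical near-critical Galton--Watson asymptotics for $1-q_n$, but the argument sketched here relies only on facts already recorded in Section~\ref{section-F-K-w}.)
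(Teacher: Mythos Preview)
Your proof is correct and follows essentially the same route as the paper: reduce to the single-particle extinction probability $q_n$, use the Taylor expansion of $Q^{\th/n}$ (equivalently of $f^{\th/n}$ around $1$) to show $1-q_n=2\th/(n\sigma^2)+o(1/n)$, and then raise to the $n$th power. The paper's argument is more terse---it writes down the expansion \eqref{eqn:taylor_pgf} and asserts the asymptotic for $q_n$ directly---whereas you supply the intermediate step $\eps_n\to0$ and are explicit about the uniformity in $n$ of the cubic remainder coming from \eqref{assum-sig-gam}; but the substance is the same.
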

\begin{proof}
Recall that $f^{\th/ n}$ is the probability generating function of $F^{\th/ n}_{B}$.
Let $q_{n}$ be the smallest non-negative root of the equation $f^{\theta/n}(q)=q$.
By Theorem 2 in Chapter I.A.5 of \cite{Athreya-1972},
$
P^{\theta/ n}_{1}\big(X \textrm{ dies out }\big) = q_n.
$
By {(\ref{assum-sig-gam})}, for  $q\in (0,1)$,
\begin{equation} \label{eqn:taylor_pgf}
f^{\th/n}(q) = 1+ (q-1)\Big(1+\frac{\th}{n}\Big)+ \frac{\sigma^2}{2}((1-q)^{2}) + o\big((1-q)^{2}\big).
\end{equation}
It follows  that
\bq  \label{b222}
P^{\theta/ n}_{1}\big(X \textrm{ dies out }\big) = q_{n}=1-\frac{2\th}{n\sigma^2} +o\big(1/n\big),
\eq
and
\bq \label{b3}
\lim_{n\rr\infty }P^{\th/n}_{n}\big(X^{} \textrm{ dies out }\big) = \lim_{n\rr\infty} \left(1 -\frac{2\th}{n\sigma^2}+o(n^{-1})\right)^{n}
= e^{-2\th/\sigma^2}.\nonumber
\eq

\end{proof}

Before stating our next lemma, we recall the duality principle which states that a supercritical branching process conditional on extinction has the same distribution as its dual subcritical process; see, for example, Theorem 3 in Chapter {I.D.12} in \cite{Athreya-1972}. Specifically,
let $\overline Z= \{\overline Z_{n}\}_{n\geq 0}$ be a Galton-Watson process with $\ol{Z}_0=1$ and an offspring distribution $\overline F_{B} = \{\ol{p}_i\}_{i\geq 0}$ that has mean $\mu>1$ and $\ol{p}_0>0$.
Define
\bn
B^{}=\{\omega:\overline Z^{}_{n}(\omega)=0 \textrm{ for some } n\geq 1\}
\en
to be the event of extinction, and let $q=P(B)\in (0,1)$.
Then the duality principle says that the process $\{\overline Z_{n}\}_{n\geq 0}$
conditional on  event $B$ has the same distribution as a subcritical Galton-Watson branching process $\{ \wt Z_{n}\}_{n\geq 0}$  with $\wt Z^{}_{0}=1$
and
\bn
E^{}\big(\xi^{\wt Z^{}_{1}}\big)  \equiv  \frac {\ol{f}(\xi q)}{q^{}}, \q \xi \in (0,1),
\en
where $\ol f^{}$ denotes the probability generating function of $\overline F_{B}$.

In the following lemma, we derive an exponential bound on $\sup_{n\geq 1}n_{}w^{\th/n_{}}_{nt}(\sqrt{n} x)$.
\begin{lemma} \label{tight-n-w}
\begin{itemize}
\item[(i)] For any $\delta>0,$
$$
\sup_{x \geq \dl} \sup_{n\geq 1}nw_{nt}^{\th/n_{}}(\sqrt{n}x) <  \infty;
$$
\item[(ii)]
there exists $\beta>0$ such that
$$
\lim_{x \rr \infty} \sup_{n\geq 1} e^{\beta x^2/t}\cdot nw_{nt}^{\th/n_{}}(\sqrt{n}x) =0, \quad \textrm{for all } t>0.
$$
\end{itemize}
\end{lemma}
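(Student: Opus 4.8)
The plan is to work with the Feynman–Kac representation of Lemma \ref{lemma-disc-f-c-upto} applied to the one-particle quantity $w^{\th/n}_{nt}(\sqrt n x)$. Since $0\le H^{\th/n}\le 1$ by \eqref{H-bnd}, every factor $[1-H^{\th/n}(w^{\th/n}_{m-j}(\W_j))]$ lies in $[0,1]$, so dropping these factors gives the clean upper bound
\[
w^{\th/n}_{nt}(\sqrt n x)\ \le\ \Big(1+\tfrac{\th}{n}\Big)^{nt}\, E_{\sqrt n x}\Big(\mathds 1_{\{\bar\tau\le nt\}}\Big)\ \le\ e^{|\th| t}\, P_{\sqrt n x}\big(\bar\tau\le nt\big),
\]
where $\bar\tau=\bar\tau_0=\min\{k:\W_k\le 0\}$ is the first time the reflected walk started at $[\sqrt n x]$ drops below the origin; here I have used Lemma \ref{lemma-disc-f-c-upto}(ii) with $y=0$, $z=\infty$, and the boundary value $w^{\th/n}_{m-\bar\tau\wedge m}(\W_{\bar\tau\wedge m})\le 1$ (it equals $1-p_0^{\th/n}\le1$ on $\{\bar\tau\le m\}$ and is itself a probability $\le1$ on $\{\bar\tau>m\}$, but on that latter event $\W_{m}>0$ and we still only need the crude bound). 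The event $\{\bar\tau\le nt\}$ forces $\max_{i\le nt}|\W_i-\W_0|\ge \sqrt n x$, i.e. $\max_{i\le nt}|S_i|\ge\sqrt n x$ where $S_i=\W_i-\W_0$ is a mean-zero, bounded-increment random walk (the increments have law $\{a_{-y}\}$, mean $0$, support in $[-R,R]$). Thus Lemma \ref{Lemma-uni2}(i), applied with $n$ replaced by $[nt]$ and $s=x/\sqrt t$, yields
\[
P_{\sqrt n x}\big(\bar\tau\le nt\big)\ \le\ C_{\ref{Lemma-uni2}}\exp\!\big(-\beta_{\ref{Lemma-uni2}}\,x^2/t\big).
\]
Combining, $w^{\th/n}_{nt}(\sqrt n x)\le C_{\ref{Lemma-uni2}}e^{|\th|t}\exp(-\beta_{\ref{Lemma-uni2}}x^2/t)$ for every $n\ge1$.

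This bound is already strong enough to prove (ii): multiply by $n$. The issue is the factor $n$, which the displayed estimate does not absorb; so I need to sharpen the bound to extract an extra $1/n$. The refinement is to iterate the Feynman–Kac formula — equivalently, to use the convolution identity of Lemma \ref{2lemma-v-id} — to gain one factor of $Q^{\th/n}(1)=1-p_0^{\th/n}$, which is $O(1)$, not $O(1/n)$, so that is not it. The right mechanism is instead the \emph{many-to-one}/branching structure: $nw^{\th/n}_{nt}(\sqrt nx)=n\,P^{\th/n}_1(M_{nt}\ge\sqrt nx)\le n\,E^{\th/n}_1[\#\{\text{particles ever reaching }[\sqrt nx,\infty)\text{ by generation }nt\}]$, and the expected number of particles at generation $k$ is $(1+\th/n)^k$, so by a first-moment (many-to-one) computation
\[
n\,P^{\th/n}_1(M_{nt}\ge\sqrt n x)\ \le\ n\sum_{k\le nt}\Big(1+\tfrac\th n\Big)^k\,P_{\sqrt n x}\big(\W_k\le0,\ \W_{k-1}>0,\dots\big)\ \le\ n e^{|\th|t} P_{\sqrt nx}(\bar\tau\le nt).
\]
That still carries a bare $n$. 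The genuinely correct route, and the one I would actually pursue, is to keep the $[1-H^{\th/n}]$ factors rather than discard them: $H^{\th/n}(s)\ge c\,s$ for small $s$ by \eqref{2H-def1}, and $w^{\th/n}_{m-j}$ is itself $O(1/n)$ once one has the (i)-type bound with a constant, so the product $\prod_{j=1}^{\bar\tau}[1-H^{\th/n}(w^{\th/n}_{m-j}(\W_j))]$ is not where the gain comes from either. The clean source of the extra $1/n$ is simply that on $\{\bar\tau>m\}$ one has $w^{\th/n}_{m-\bar\tau\wedge m}(\W_{\bar\tau\wedge m})=w^{\th/n}_0(\cdot)=0$, so the entire contribution to $w^{\th/n}_m(\sqrt nx)$ comes from $\{\bar\tau\le m\}$ with terminal value exactly $1-p_0^{\th/n}$; therefore
\[
w^{\th/n}_{nt}(\sqrt n x)=E_{\sqrt nx}\Big[(1+\tfrac\th n)^{\bar\tau}(1-p_0^{\th/n})\mathds 1_{\{\bar\tau\le nt\}}\prod_{j=1}^{\bar\tau}\big(1-H^{\th/n}(w^{\th/n}_{nt-j}(\W_j))\big)\Big].
\]
Here is the point: $1-p_0^{\th/n}\to 1-p_0$ is $O(1)$, so to get $n w^{\th/n}_{nt}(\sqrt nx)=O(1)$ I should iterate one more time, writing $w^{\th/n}_{nt-j}$ under the expectation again via Lemma \ref{2lamma-v-1}, which replaces it by $Q^{\th/n}(w^{\th/n}_{\cdot})\approx w^{\th/n}_\cdot$, and so on — this does not terminate. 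Hence the honest approach: bound $n w^{\th/n}_{nt}(\sqrt nx)$ by comparison with the \emph{critical} case $\th=0$ plus a Gronwall-type correction. Specifically, from Lemma \ref{2lemma-v-id}, $v_k:=nw^{\th/n}_k(x)$ satisfies $v_k\le(1+\th/n)\sum_y a_y v_{k-1}(x-y)$ (dropping the nonnegative $h^{\th/n}\ge0$ term), so $v_k\le e^{|\th|k/n}\cdot(\text{the }\th=0\text{ solution started from the same data})$, and for $\th=0$ the bound $\sup_{n,x\ge\delta}n w^{0}_{nt}(\sqrt n x)<\infty$ is exactly the content of the critical-case estimates in \cite{LS15} (their Lemma 10 and the analysis of $w_\infty$, or directly Proposition in Section 2 of \cite{LS15}), together with $n w^0_k(x)\le n P^0_1(M_k\ge x)\le n E^0_1[\text{range}\cap[x,\infty)\ne\emptyset]$ controlled by the Feynman–Kac bound above with the $h$-term, which for $\th=0$ reads $nw^0_k(x)\le E_x[\mathds 1_{\bar\tau\le k}(1-p_0)\prod(1-H^0(w^0_{k-j}))]$ — and now $H^0(w^0_{k-j})\ge c\,w^0_{k-j}$, so the product decays and the renewal/iteration converges, giving the uniform bound. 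Carrying this over by the factor $e^{|\th|t}$ gives (i), and multiplying the sharpened bound by the Gaussian tail from Lemma \ref{Lemma-uni2}(i) as in the first paragraph gives (ii).

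I would present it in this order: (1) record the Feynman–Kac upper bound $w^{\th/n}_{m}(x)\le E_x[(1+\th/n)^{\bar\tau\wedge m}\,\mathds 1_{\{\bar\tau\le m\}}\prod_{j\le\bar\tau}(1-H^{\th/n}(w^{\th/n}_{m-j}(\W_j)))]$ via Lemma \ref{lemma-disc-f-c-upto}(ii) and \eqref{H-bnd}, noting the terminal value is $\le 1-p_0^{\th/n}$; (2) for (i), reduce to the critical case by the multiplicative factor $e^{|\th|t}$ (valid since dropping $h^{\th/n}\ge0$ in Lemma \ref{2lemma-v-id} gives $w^{\th/n}_k\le(1+\th/n)\,a * w^{\th/n}_{k-1}$, iterate $k\le nt$ times), and invoke the $\th=0$ uniform bound, which follows from the same Feynman–Kac formula with $\th=0$ by using $H^0(s)\ge c\,s$ near $0$ and an induction on $k$ showing $n w^0_k(x)\le K$ for a constant $K=K(\delta)$ uniformly in $x\ge\delta$; (3) for (ii), combine the bound from step (1)–(2), $n w^{\th/n}_{nt}(\sqrt nx)\le K e^{|\th|t}\, P_{\sqrt nx}(\bar\tau\le nt)$, with Lemma \ref{Lemma-uni2}(i) ($s=x/\sqrt t$) to get $n w^{\th/n}_{nt}(\sqrt nx)\le K C_{\ref{Lemma-uni2}}e^{|\th|t}e^{-\beta_{\ref{Lemma-uni2}}x^2/t}$, and take any $\beta<\beta_{\ref{Lemma-uni2}}$. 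The main obstacle is step (2): getting the \emph{uniform in $n$} constant in (i), i.e. showing the $n$ in $n w^{\th/n}_{nt}$ is exactly cancelled. The clean way is the induction using $H^{\th/n}(s)\ge c s$ for $s$ small and $H^{\th/n}\ge 0$ always: set $a_k^{(n)}:=\sup_{x\ge\delta}n w^{\th/n}_k(x)$; from the recursion and a barrier/exit estimate for $\W$ one derives $a_k^{(n)}\le e^{|\th|/n}\big(a_{k-1}^{(n)} - c' (a_{k-1}^{(n)})^2/n + \text{(overshoot term }O(1/\sqrt k)\,)\big)$-type inequality on the region where $\W$ has not yet exited, whose solution stays bounded by $\max(a_0^{(n)}, C/\delta^{?})$ uniformly in $n$ — this is precisely the discrete analogue of the a priori bound $\phi(t,x)\le(\th^++12\sigma_R^2/x^2)/\sigma^2$ from Corollary \ref{corol-u-x}, and in the write-up I would derive it by the same exit-time comparison that underlies that corollary rather than re-doing \cite{LS15}.
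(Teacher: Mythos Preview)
Your proposal has two genuine gaps, one for each part.

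\textbf{Part (i).} Your ``reduction to the critical case by the multiplicative factor $e^{|\theta|t}$'' is not justified. Dropping $h^{\theta/n}\ge 0$ in Lemma~\ref{2lemma-v-id} and iterating $k$ times gives
\[
w^{\theta/n}_k(x)\ \le\ \Big(1+\tfrac{\theta}{n}\Big)^k (a^{*k}*w^{\theta/n}_0)(x)\ =\ \Big(1+\tfrac{\theta}{n}\Big)^k P_x(\W_k\le 0),
\]
which is a bound in terms of the \emph{random walk} only, not a comparison to $w^{0}_k$. Nothing here yields $w^{\theta/n}_{nt}\le e^{|\theta|t}w^0_{nt}$; the offspring laws $F_B^{\theta/n}$ and $F_B^0$ need not be coupled. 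The paper handles $\theta\ge 0$ by a completely different route: split $P_1^{\theta/n}(M_{nt}\ge\sqrt nx)\le P_1^{\theta/n}(B^c)+P_1^{\theta/n}(M\ge\sqrt nx\mid B)$, where $B$ is extinction. The first term is $O(1/n)$ by the extinction probability computation \eqref{b222}; for the second, the duality principle says that the supercritical process conditioned on extinction is a subcritical process, and then one invokes the known critical-case estimate $\sup_{y\ge 1}y^2P^0_1(M\ge y)<\infty$ from \cite{LS15}. Your attempt to re-derive the critical bound from scratch via a recursion on $a_k^{(n)}:=\sup_{x\ge\delta}nw^{\theta/n}_k(x)$ is not carried out, and your suggestion to appeal to Corollary~\ref{corol-u-x} would be circular (that corollary is proved downstream of this lemma).

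\textbf{Part (ii).} Your step~(3) asserts $n w^{\theta/n}_{nt}(\sqrt nx)\le K e^{|\theta|t}\, P_{\sqrt nx}(\bar\tau_0\le nt)$, but this is never established: from your step~(1) you only have $w\le e^{|\theta|t}P_{\sqrt nx}(\bar\tau_0\le nt)$, so multiplying by $n$ leaves the $n$ unabsorbed, and step~(2) gives $nw\le K$ with no Gaussian factor. The missing idea is to apply the Feynman--Kac formula with the stopping time $\bar\tau_{\sqrt n x/2}$ at an \emph{intermediate} level rather than at $0$. On $\{\bar\tau_{\sqrt n x/2}\ge nt\}$ the terminal value is $w^{\theta/n}_0(\W_{nt})=0$ since $\W_{nt}>0$; on $\{\bar\tau_{\sqrt n x/2}<nt\}$ the terminal value is at most $w^{\theta/n}_{nt}(\sqrt n x/2-R)$ by monotonicity and the finite range, and \emph{this} is $O(1/n)$ by part~(i). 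The hitting probability $P_{\sqrt nx}(\bar\tau_{\sqrt n x/2}<nt)$ then supplies the Gaussian factor via Lemma~\ref{Lemma-uni2}(i). Stopping at $0$ cannot work because the terminal value there is $1-p_0^{\theta/n}=O(1)$, exactly the obstacle you identified but did not resolve.
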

\begin{proof}
(i) We shall only prove the result when $\th \geq 0$.
Recall that $M= \sup_{n\geq 0} M_{n}$ stands for the maximal displacement over all generations.
From Theorem 1 in \cite{LS15}, which applies to critical branching random walks, we have
\be \label{LS-res}
\sup_{x \geq 1}x^2P^{0}_1(M \geq x)  <\infty.
\ee
Therefore, if we denote by $B$ the event of extinction of the branching random walk $X$, then by the duality principle we get
$$
\sup_{x \geq \dl} \sup_{n\geq 1} nx^2P^{\theta/ {n}}_1(M \geq \sqrt{n}x \,| \, B)  <\infty.
$$
Moreover, by (\ref{b222}), there exist positive constants $c_1$ and $c_2$ independent of $n$ and $x$ such that
\[
\aligned
nw^{\theta/n}_{nt}(\sqrt{n} x)
\leq & {n P_1^{\theta/n}(B^c) + nP_1^{\theta/n}(M>\sqrt{n}x\,  |\, B) }\\
\leq & n\left(\frac{c_1}{n} + \frac{c_2}{nx^2}\right), \quad \textrm{for all } x\geq {\delta},\ n\geq 0,
\endaligned
\]
and we get (i).

(ii)
By Lemma \ref{lemma-disc-f-c-upto}(ii) and (\ref{H-bnd}), we have
\begin{equation} \label{ttr1}
\aligned
&nw^{\th/n}_{nt}(\sqrt{n}x)\\
\leq  &nE_{\sqrt{n}x}\Big(\Big(1+\frac{\th}{n}\Big)^{\bar\tau_{\sqrt{n}x/2}\wedge (nt)} w_{nt- \bar \tau_{\sqrt{n}x/2}\wedge (nt)}^{\th/n}(\W_{\bar\tau_{\sqrt{n}x/2}\wedge (nt)}) \Big) \\
\leq &  n e^{\theta^+ t} E_{\sqrt{n}x}\Big(w_{0}^{\th/n}(\W_{nt}) \mathds{1}_{\{\bar\tau_{\sqrt{n}x/2}\geq nt\}} \Big) \\
& \quad + n e^{\theta^+ t} E_{\sqrt{n}x}\Big(w_{nt}^{\th/n}(\W_{\bar\tau_{\sqrt{n}x/2}}) \mathds{1}_{\{\bar\tau_{\sqrt{n}x/2}< nt\}} \Big).
\endaligned
\end{equation}
Because $w^{\theta/n}_0(y) = 0$ for  $y\geq 1$, we have for all $x >0$ and $n\geq 1$,
\bd
n e^{\theta^+ t} E_{\sqrt{n}x}\Big(w_{0}^{\th/n}(\W_{nt}) \mathds{1}_{\{\bar\tau_{\sqrt{n}x/2}\geq nt\}} \Big) =0.
\ed
Recall that $\W$ has a range $R$. From the monotonicity of $w^{\theta/n}_{nt}(x)$ in $x$ and part~(i), we get that there exists a positive constant $C$  such that
\begin{equation}\label{eq:nw_bd_exit_prob}
\aligned
 & \q n e^{\theta^+ t}\, E_{\sqrt{n}x}\Big(w_{nt}^{\th/n}(\W_{\bar\tau_{\sqrt{n}x/2}}) \mathds{1}_{\{\bar\tau_{\sqrt{n}x/2}< nt\}} \Big) \\
 \leq &  e^{\theta^+ t} E_{\sqrt{n}x}\Big(nw_{nt}^{\th/n}(\sqrt{n}x/2-R) \mathds{1}_{\{\bar\tau_{\sqrt{n}x/2}< nt\}} \Big) \\
\leq & {C} e^{\theta^+ t} P_{\sqrt{n}x}( \bar\tau_{\sqrt{n}x/2}< nt ) \\
\leq & C C_{\ref{Lemma-uni2}}e^{\theta^+ t} e^{- \beta_{3,1}\frac{x^2}{t}},  \quad \textrm{for all } n\geq 1, \ x \geq 2R,
 \endaligned
\end{equation}
where we used Lemma \ref{Lemma-uni2}(i) in the last inequality.
The conclusion follows.
\end{proof}

The following lemmas are key ingredients in proving the tightness of of~$(n w^{\th/n}_{nt}(\sqrt{n} x))$.

\begin{lemma}  \label{lemma-pgf}
For any $T>0$, there exists $C(T)>0$ such that for all $x, y \in \zz{Z}/\sqrt{n}$ with $ |x-y| \leq 1$ and  $ t\in [0,T],$
\bd
\sup_{n\geq1} \Big|E_{\sqrt{n}x}\Big(\Big(1+\frac{\th}{n}\Big)^{{\tau_{\sqrt{n}y}}\wedge (nt)} \Big) -1\Big| \leq C(T)|x-y|.
\ed
\end{lemma}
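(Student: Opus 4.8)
The plan is to split the analysis according to whether $\tau_{\sqrt n y}\le nt$ or not, and exploit the fact that $(1+\th/n)^k$ is close to $1$ on the time scale $k=O(n)$. Write $\tau = \tau_{\sqrt n y}\wedge (nt)$. Since $\tau\le nt$ always, we have $(1+\th/n)^{\tau}\in[(1+\th/n)^{nt},(1+|\th|/n)^{nt}]$ when $\th$ has a fixed sign, and in general $|(1+\th/n)^{\tau}-1|\le (1+|\th|/n)^{nt}-1 \le e^{|\th|T}-1 =: C_0(T)$ uniformly in $n$ and in $\tau\le nT$. Thus the left-hand side is bounded by $C_0(T)$ unconditionally; the point of the lemma is the \emph{linear in $|x-y|$} improvement, which must come from the fact that when $|x-y|$ is small the hitting time $\tau_{\sqrt n y}$ is small with high probability, so $(1+\th/n)^{\tau}$ is correspondingly close to $1$.

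First I would record the elementary bound $|(1+\th/n)^{k}-1|\le C(T)\,(k/n)$ valid for $0\le k\le nT$ and all $n\ge 1$; this follows from $\log(1+\th/n) = \th/n + O(1/n^2)$ and the mean value theorem, since on $[0,T]$ the exponent $k\log(1+\th/n)$ stays bounded and $|e^{s}-1|\le e^{|s|_{\max}}|s|$. Applying this with $k=\tau_{\sqrt n y}\wedge(nt)$ gives
\[
\Big|E_{\sqrt n x}\Big(\big(1+\tfrac{\th}{n}\big)^{\tau_{\sqrt n y}\wedge (nt)}\Big)-1\Big|
\le \frac{C(T)}{n}\, E_{\sqrt n x}\big(\tau_{\sqrt n y}\wedge (nt)\big)
\le \frac{C(T)}{n}\, E_{\sqrt n x}\big(\tau_{\sqrt n y}\big).
\]
So the lemma reduces to the estimate $E_{\sqrt n x}(\tau_{\sqrt n y}) \le C(T)\, n\,|x-y|$ for $|x-y|\le 1$. (If $x=y$ then $\tau_{\sqrt n y}=0$ and there is nothing to prove; if $\sqrt n y \le \sqrt n x$ then $\tau_{\sqrt n y}$ could be $0$ as well depending on the definition — recall $\tau_y=\min\{k\ge 0:\W_k\ge y\}$ — so the nontrivial case is $y>x$, i.e. hitting a level strictly above the start, at distance $\sqrt n(y-x)\le \sqrt n$.)

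The remaining step is a standard first-passage estimate for the mean-zero, bounded-range walk $\W$. The expected time for such a walk started at $0$ to reach level $\ell>0$ is $O(\ell)$ when $\ell$ is at most order $\sqrt n$ — more precisely, one truncates at time $nT$ and uses that by time $cn$ the walk has fluctuations of order $\sqrt n$, so it overshoots level $\ell = \sqrt n(y-x)\le\sqrt n$ within $O(n)$ steps with probability bounded below; iterating (restart argument / geometric number of independent blocks) gives $E_{\sqrt n x}(\tau_{\sqrt n y}\wedge(nT))\le C(T)\,n$. For the sharper linear-in-$|x-y|$ dependence one uses the optional stopping theorem on the martingale $\W_k^2 - \sigma_R^2 k$: stopping at $\tau_{\sqrt n y}\wedge(nt)\wedge\bar\tau_{\sqrt n x - \sqrt n}$ (a two-sided exit that keeps the walk within an $O(\sqrt n)$ window) yields $\sigma_R^2\, E(\tau\wedge\cdots) = E(\W_{\tau\wedge\cdots}^2) - (\sqrt n x)^2 \le C n\, \sqrt n |x-y|$ after subtracting and using that on the event the walk has moved $O(\sqrt n |x-y|)$ in a window — combined with Lemma \ref{Lemma-uni2}(i) to control the contribution of $\{\tau>nt\}$, which is exponentially small in $|x-y|^{-2}$... — actually it is cleanest to just prove $E_{\sqrt n x}(\tau_{\sqrt n y}) \le C n |x-y|$ directly via Wald/optional stopping on $\W_k - (\sqrt n x)$ together with a range bound on the overshoot (Lemma 10 of \cite{LS15}), since $E(\W_{\tau_{\sqrt n y}}) = \sqrt n y + O(1)$ forces... no — $\W$ has mean zero so $\W_k$ is itself a martingale and $E(\W_{\tau\wedge m}) = \sqrt n x$, which is consistent but gives no time bound; the second moment martingale $\W_k^2-\sigma_R^2 k$ is the right tool. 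I expect the main obstacle to be exactly this: extracting the factor $|x-y|$ (rather than merely $O(1)$ or $O(\sqrt n)/n = O(1/\sqrt n)$) from the first-passage time, which requires the two-sided-exit optional stopping argument together with an $L^2$ estimate on the exit location, and carefully handling the truncation at $nt$ via the Gaussian-type tail bound of Lemma \ref{Lemma-uni2}(i). Everything else is routine.
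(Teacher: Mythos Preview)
Your reduction is correct: the inequality $|(1+\th/n)^k-1|\le C(T)\,k/n$ for $0\le k\le nT$ is elementary, and it does reduce the lemma to showing
\[
E_{\sqrt n x}\big(\tau_{\sqrt n y}\wedge (nt)\big)\ \le\ C(T)\,n\,|x-y|.
\]
This bound is true (for a mean-zero finite-variance walk one has $P(\tau_\ell>k)\le C\ell/\sqrt k$, which sums to $E(\tau_\ell\wedge m)\le C\ell\sqrt m$, giving $C\sqrt n(y-x)\cdot\sqrt{nt}=C\sqrt t\,n(y-x)$), but you do not actually prove it. The tool you reach for, optional stopping on $\W_k^2-\sigma_R^2 k$, does not give it directly: at the stopped time $\tau\wedge nt$, on the event $\{\tau>nt\}$ the walk $\W_{nt}$ is only constrained from above and can be arbitrarily negative, so $E(\W_{\tau\wedge nt}^2)-(\sqrt n x)^2$ is not obviously $O(n|x-y|)$. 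Your two-sided-exit variant introduces a second exit probability that must itself be controlled, and you end up needing essentially the same first-passage asymptotics you were trying to avoid. So the proposal has the right target but leaves the one nontrivial estimate unproved.

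The paper takes a different and shorter route that sidesteps any expectation computation. It splits on the event $\{\tau_{\sqrt n y}\le n(y-x)\}$. On this event the exponent is at most $n(y-x)$, so $(1+\th/n)^{\tau}\le e^{\th(y-x)}=1+O(y-x)$ directly. On the complement, the factor $(1+\th/n)^{\tau\wedge nt}$ is bounded by $e^{\th^+ T}$, and the probability $P_{\sqrt n x}(\tau_{\sqrt n y}>n(y-x))$ is controlled by Lemma~\ref{Lemma-uni2}(ii) applied with time horizon $n(y-x)$ and level $\sqrt n(y-x)$ (so $s=\sqrt{y-x}$), giving a bound $C'e^{-\beta'/(y-x)}=o(y-x)$. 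Thus both pieces are $O(|x-y|)$ and no moment of $\tau$ is needed. Your approach would work once the first-passage expectation bound is supplied, but the paper's splitting is the cleaner argument here.
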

\begin{proof}
We only need to prove for the case when $y>x$ because otherwise the LHS equals 0.
We will also only prove for the case when $\th\geq 0$; the case when~$\th<0$ can be proved similarly.

Note that
\[
\aligned
E_{\sqrt{n}x}\Big(\Big(1+\frac{\th}{n}\Big)^{\tau_{\sqrt{n}y}\wedge (nt)} \Big)
&=E_{\sqrt{n}x}\Big(\Big(1+\frac{\th}{n}\Big)^{{\tau_{\sqrt{n}y}\wedge (nt)}} \cdot \indic_{{\{\tau_{\sqrt{n}y}\leq n(y-x)} \} } \Big) \\
&\q +E_{\sqrt{n}x}\Big(\Big(1+\frac{\th}{n}\Big)^{\tau_{\sqrt{n}y}\wedge (nt)} \cdot \indic_{{\{\tau_{\sqrt{n}y}> n(y-x)}\}} \Big)\\
&=:I_{1}(n,y-x,t) +I_{2}(n,y-x,t).
\endaligned
\]
For $I_{1}(n,y-x,t)$,  we have
\bn
I_{1}(n,y-x,t)
\leq  e^{\th(y-x)}.
\en
Because $e^{x}$ is a Lipschitz function, we get
$$
I_{1}(n,y-x)-1 \leq C(y-x), \quad \textrm{for all }  0\leq y-x \leq 1.
$$
About  term $I_{2}(n,y-x)$, using Lemma \ref{Lemma-uni2}(ii) we get
\bn
I_{2}(n,y-x)& \leq& e^{\th t} P_{\sqrt{n}x}\big(\tau_{\sqrt{n}y} > n(y-x)) \\
&\leq & C(T)e^{-\beta'/(y-x)},
\en
and the result follows.
\end{proof}
\begin{lemma}  \label{lemma-spat-reg}
For any $0<t_0<T<\infty$ and $x_{0}>0$, there exist $\n_0>0$ and $C=C(t_0,T,x_0)>0$ such that for all $n \geq \n_0$ and $t\in [t_0,T]$ we have
\bn
n\big|w^{\th/n}_{nt}(\sqrt{n}y)-w^{\th/n}_{nt}(\sqrt{n}x) \big| &\leq& C|y-x|, \\
&& \textrm{for all }  \ x_{0}< x, y \in \zz{Z}/\sqrt{n}  \mbox{ with }|x-y|\leq 1.
\en
\end{lemma}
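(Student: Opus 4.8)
The plan is to exploit the discrete Feynman--Kac representation of Lemma~\ref{lemma-disc-f-c-upto} together with the exit-time estimates just established (Lemmas~\ref{Lemma-uni2}, \ref{lemma-pgf}) and the uniform boundedness from Lemma~\ref{tight-n-w}. Fix $0<t_0<T$ and $x_0>0$, and take $x,y\in\zz{Z}/\sqrt n$ with $x_0<x<y$ and $|x-y|\le 1$; by monotonicity $w^{\th/n}_{nt}(\sqrt n x)\ge w^{\th/n}_{nt}(\sqrt n y)$, so we must bound the difference from above. The idea is to run the reflected walk $\W$ started at $\lceil\sqrt n x\rceil$ and wait for it to reach level $\sqrt n y$ (to the right): writing $\tau:=\tau_{\sqrt n y}$, apply Lemma~\ref{lemma-disc-f-c-upto}(ii) at $x$ with the window $(y',z)$ chosen so that the relevant exit is exactly this upward passage (e.g.\ $z=\infty$ and $y'$ below $x_0/2$), to get
\[
w^{\th/n}_{nt}(\sqrt n x)=E_{\sqrt n x}\Big(\big(1+\tfrac\th n\big)^{\tau\wedge(nt)}\,w^{\th/n}_{nt-\tau\wedge(nt)}\big(\W_{\tau\wedge(nt)}\big)\prod_{j=1}^{\tau\wedge(nt)}\big[1-H^{\th/n}(w^{\th/n}_{nt-j}(\W_j))\big]\Big).
\]
On $\{\tau\ge nt\}$ the walk has stayed below $\sqrt n y$ for all $nt$ steps, while on $\{\tau<nt\}$ at the passage time $\W_\tau\in[\sqrt n y,\sqrt n y+R]$ and $nt-\tau$ generations remain.

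The key steps, in order, are: (1) Split the expectation according to $\{\tau<nt\}$ and $\{\tau\ge nt\}$. (2) On $\{\tau<nt\}$, use $0\le 1-H^{\th/n}\le 1$ (by \eqref{H-bnd}), the bound $(1+\th/n)^{\tau\wedge(nt)}\le e^{\th^+ T}$, and monotonicity of $w^{\th/n}_{k}$ in its argument together with $nt-\tau\le nt$ to control the remaining factor by $w^{\th/n}_{nt}(\sqrt n y)\le w^{\th/n}_{nt}(\W_\tau)$; this shows the contribution of $\{\tau<nt\}$ to $w^{\th/n}_{nt}(\sqrt n x)$ is at most $e^{\th^+T}\,E_{\sqrt n x}\big((1+\tfrac\th n)^{\tau\wedge(nt)}\big)\cdot w^{\th/n}_{nt}(\sqrt n y)$ up to an error of size $O(1/n)$ coming from the overshoot $\W_\tau-\sqrt n y\le R$ (here I use Lemma~\ref{tight-n-w}(i) to say $n w^{\th/n}_{nt}(\sqrt n y - c/\sqrt n)$ and $nw^{\th/n}_{nt}(\sqrt n y)$ differ by $O(1)$, i.e.\ the overshoot costs only $O(1/n)$ in $w$). (3) On $\{\tau\ge nt\}$, bound $w^{\th/n}_{nt-j}(\W_j)\le w^{\th/n}_{nt}(\W_j)$ crudely and use that $\W$ stays below $\sqrt n y\le \sqrt n x+\sqrt n$, so this term contributes at most $e^{\th^+T}E_{\sqrt n x}\big((1+\tfrac\th n)^{\tau\wedge nt}\,w^{\th/n}_{nt}(\W_{nt})\indic_{\{\tau\ge nt\}}\big)$; since on this event the walk has not moved up past level $\sqrt n y$, one bounds $w^{\th/n}_{nt}(\W_{nt})$ by dominating the probability that the walk is still to the right of, say, $\sqrt n x_0$ — combined with Lemma~\ref{Lemma-uni2}(i) this last piece is exponentially small in $n|x-y|$... actually more carefully: on $\{\tau\ge nt\}$ we still have $w^{\th/n}_{nt}(\W_{nt})$ possibly of order $1/n$, so its contribution after multiplying by $n$ is $O(1)\cdot P_{\sqrt n x}(\tau\ge nt)$, and one must instead compare it to $w^{\th/n}_{nt}(\sqrt n y)$ directly: on $\{\tau \ge nt\}$, $\W_{nt}<\sqrt n y$ need not help, so the cleanest route is to note $w^{\th/n}_{nt}(\W_{nt})\le w^{\th/n}_{nt}(\sqrt n x_0/2)$ whenever $\W_{nt}\ge \sqrt n x_0/2$ and handle $\{\W_{nt}<\sqrt n x_0/2\}$ separately using that then the walk dropped by $\gtrsim\sqrt n x_0$, which has probability exponentially small by Lemma~\ref{Lemma-uni2}(i). (4) Assemble: after multiplying through by $n$, we obtain
\[
0\le n w^{\th/n}_{nt}(\sqrt n x)-n w^{\th/n}_{nt}(\sqrt n y)\le \Big(e^{\th^+T}E_{\sqrt n x}\big[(1+\tfrac\th n)^{\tau\wedge nt}\big]-1\Big)\,n w^{\th/n}_{nt}(\sqrt n y)+O(1/n)+(\text{exp.\ small}),
\]
and then invoke Lemma~\ref{lemma-pgf} to bound the bracket by $C(T)|x-y|$ and Lemma~\ref{tight-n-w}(i) to bound $n w^{\th/n}_{nt}(\sqrt n y)$ by a constant depending on $x_0$ (using $y>x_0$). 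This yields $n|w^{\th/n}_{nt}(\sqrt n y)-w^{\th/n}_{nt}(\sqrt n x)|\le C(t_0,T,x_0)|x-y|$ for $n$ large.

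The main obstacle I anticipate is the bookkeeping in step~(3): controlling $w^{\th/n}_{nt}(\W_{nt})$ on the event that the upward passage has \emph{not} happened, since there the walk can still be anywhere below $\sqrt n y$ and $w^{\th/n}_{nt}$ evaluated there is not obviously small. The resolution is to use that $|x-y|\le 1$ forces $\sqrt n y\le \sqrt n x+\sqrt n$, so $\W_{nt}$ staying below $\sqrt n y$ while having started at $\sqrt n x> \sqrt n x_0$ means either $\W_{nt}$ is still $\gtrsim \sqrt n x_0$ (so $w^{\th/n}_{nt}(\W_{nt})\le w^{\th/n}_{nt}(\sqrt n x_0)$, bounded by Lemma~\ref{tight-n-w}(i) times $1/n$, but then multiplied by $P(\tau\ge nt)$ which is genuinely close to $1$ — so this term is \emph{not} small and in fact it is precisely what must cancel against $nw^{\th/n}_{nt}(\sqrt n y)$). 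In other words the correct decomposition is not ``small vs.\ large'' but must keep the $\{\tau\ge nt\}$ term and recognize that $w^{\th/n}_{nt}(\W_{nt})\le w^{\th/n}_{nt}(\sqrt n x_0/2)$ is \emph{not} enough — one genuinely needs the Feynman--Kac identity applied \emph{at $y$} as well, so that the $\{\tau\ge nt\}$ contributions to $w(x)$ and to $w(y)$ are compared path-by-path on the same probability space. Concretely: couple the walks from $\sqrt n x$ and $\sqrt n y$ by the same increments; the representation gives $w^{\th/n}_{nt}(\sqrt n y)$ via the passage to level $\sqrt n y$ starting from $\sqrt n y$ (trivial, $\tau=0$) — this does not directly match. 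The cleaner device, which I would ultimately use, is the strong Markov property: starting from $\sqrt n x$, at time $\tau$ (if $\tau<nt$) the walk is at $\W_\tau\ge\sqrt n y$, and from there $w^{\th/n}_{\cdot}$ is comparable to $w^{\th/n}_{nt}(\sqrt n y)$ up to the $O(1/n)$ overshoot and the $\le nt$ time-shift; while the event $\{\tau\ge nt\}$ has probability $\le C e^{-\beta/|x-y|^2}$... no — $P_{\sqrt n x}(\tau_{\sqrt n y}\ge nt)$ is \emph{not} small, $\max S_i$ exceeding only $\sqrt n |x-y|=\sqrt n\cdot(\text{small})$ is likely. Hence the honest statement is that the $\{\tau\ge nt\}$ term contributes $\le e^{\th^+T}P_{\sqrt n x}(\tau\ge nt)\cdot n w^{\th/n}_{nt}(\sqrt n x_0/2)\le C$, a genuine constant, \emph{not} $O(|x-y|)$ — so this route as stated fails, and the Lipschitz bound must instead come entirely from the multiplicative factor. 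The fix: on $\{\tau\ge nt\}$ bound $w^{\th/n}_{nt}(\W_{nt})\le w^{\th/n}_{nt}(\sqrt n x)$ (monotonicity, since $\W_{nt}\le \sqrt n y$ is useless but $\W_{nt}$ could exceed $\sqrt n x$) — this is circular. The genuinely correct approach, and the one I would commit to, is to compare $w(x)$ and $w(y)$ symmetrically: run from $\sqrt n y$, apply Feynman--Kac with the \emph{downward} passage $\bar\tau_{\sqrt n x}$ to level $\sqrt n x$, obtaining $w^{\th/n}_{nt}(\sqrt n y)\ge$ (something like) $E_{\sqrt n y}[(1+\th/n)^{\bar\tau_{\sqrt n x}\wedge nt}\,w^{\th/n}_{nt-\bar\tau\wedge nt}(\W_{\bar\tau\wedge nt})\prod(\cdots)]$, and on $\{\bar\tau_{\sqrt n x}<nt\}$ the walk reaches $[\sqrt n x-R,\sqrt n x]$, giving a lower bound on $w(y)$ of roughly $P_{\sqrt n y}(\bar\tau_{\sqrt n x}<nt)\cdot(1-\text{err})\cdot w^{\th/n}_{nt}(\sqrt n x)$; since $P_{\sqrt n y}(\bar\tau_{\sqrt n x}\ge nt)\le C e^{-\beta/(t_0|x-y|^2)}$ is \emph{exponentially small} in $1/|x-y|^2$ by Lemma~\ref{Lemma-uni2}(ii) (the walk from $\sqrt n y$ must stay above $\sqrt n x$, i.e.\ $\min S_i\ge -\sqrt n|x-y|$, unlikely over $nt_0$ steps when $|x-y|$ is small), we get $w^{\th/n}_{nt}(\sqrt n y)\ge (1-Ce^{-\beta/(t_0|x-y|^2)})(1-C|x-y|)w^{\th/n}_{nt}(\sqrt n x)-O(1/n^2)$, hence $n[w^{\th/n}_{nt}(\sqrt n x)-w^{\th/n}_{nt}(\sqrt n y)]\le C|x-y|\cdot nw^{\th/n}_{nt}(\sqrt n x)+o(1)\le C(x_0)|x-y|$ using Lemma~\ref{tight-n-w}(i). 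This is the argument I would write up, and the subtlety of choosing the \emph{downward} passage from the larger point (so that the complementary event is exponentially rare) is the crux.
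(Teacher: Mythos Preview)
Your final committed approach --- start the walk from $\sqrt{n}y$, run it down to level $\sqrt{n}x$, and extract a lower bound on $w^{\th/n}_{nt}(\sqrt{n}y)$ --- has a genuine gap at the time shift. On $\{\bar\tau_{\sqrt{n}x}<nt\}$ the Feynman--Kac identity produces $w^{\th/n}_{nt-\bar\tau}(\W_{\bar\tau})$ with $\W_{\bar\tau}\le\sqrt{n}x$; monotonicity in space gives $w^{\th/n}_{nt-\bar\tau}(\W_{\bar\tau})\ge w^{\th/n}_{nt-\bar\tau}(\sqrt{n}x)$, but since $w^{\th/n}_k$ is \emph{increasing} in $k$ this is $\le w^{\th/n}_{nt}(\sqrt{n}x)$, not $\ge$. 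So the claimed inequality $w^{\th/n}_{nt}(\sqrt{n}y)\ge(1-\text{err})\,w^{\th/n}_{nt}(\sqrt{n}x)$ does not follow, and you cannot fix it without first proving temporal regularity --- which in the paper is Lemma~\ref{lemma-reg-time}, proved \emph{after} the present lemma and using it. You also never lower-bound the product $\prod(1-H^{\th/n}(\cdots))$ or $(1+\th/n)^{\bar\tau}$ when $\th<0$; these are handleable, but the time-shift is not.

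The paper instead seeks an \emph{upper} bound on $w^{\th/n}_{nt}(\sqrt{n}x)$ in terms of $w^{\th/n}_{nt}(\sqrt{n}y)$, so both monotonicities point the right way and the product can simply be dropped via~\eqref{H-bnd}. The missing idea is a \emph{two-sided} exit: start from $\sqrt{n}x$ and stop at $\bar\tau_{\sqrt{n}x/2,\sqrt{n}y}\wedge nt$. If the walk exits to the right first, bound $w^{\th/n}_{nt-\tau}(\W_\tau)\le w^{\th/n}_{nt}(\sqrt{n}y)$ and invoke Lemma~\ref{lemma-pgf}. On the complementary event $\{\tau_{\sqrt{n}y}>\bar\tau_{\sqrt{n}x/2}\wedge nt\}$ the walk has stayed $\ge\sqrt{n}x/2-R$, so $w^{\th/n}_{nt}$ there is $O(1/n)$ by Lemma~\ref{tight-n-w}(i); and the probability of that event is at most $P_{\sqrt{n}x}(\tau_{\sqrt{n}y}>nt)+P_{\sqrt{n}x}(\tau_{\sqrt{n}y}>\bar\tau_{\sqrt{n}x/2})$, bounded respectively by $Ce^{-\beta t_0/(y-x)^2}$ (Lemma~\ref{Lemma-uni2}(ii)) and $C(y-x)/x_0$ (optional stopping / gambler's ruin), both $O(|x-y|)$. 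Incidentally, you talked yourself out of the upward-passage picture by miscomputing $P_{\sqrt{n}x}(\tau_{\sqrt{n}y}\ge nt)$: over $nt\ge nt_0$ steps the running maximum of a centered walk easily exceeds $\sqrt{n}(y-x)$ when $y-x$ is small, so that probability \emph{is} exponentially small in $1/(y-x)^2$ --- but without the lower barrier at $\sqrt{n}x/2$, on that rare event $\W_{nt}$ can be $\le 0$ and the term $nw_0(\W_{nt})=n$ would still blow up. The lower barrier is what makes the argument close.
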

\begin{proof}
Let $x_{0}>0$, $x_{0}< x< y <\infty$ and $t\geq t_0$.
It follows from Lemma \ref{lemma-disc-f-c-upto}(ii) with $m=nt$ and \eqref{H-bnd} that
\be \label{rf1}
\begin{aligned}
&\qq w^{\th/n}_{nt}(\sqrt{n}x)  \\
&\leq E_{\sqrt{n}x}\Big(\Big(1+\frac{\th}{n}\Big)^{\bar\tau_{\sqrt{n}x/2,\sqrt{n}y}\wedge (nt)} w_{nt- \bar \tau_{\sqrt{n}x/2,\sqrt{n}y}\wedge (nt)}^{\th/n}(\W_{\bar\tau_{\sqrt{n}x/2,\sqrt{n} y}\wedge (nt)}) \Big) \\
&\leq E_{\sqrt{n}x}\Big(\Big(1+\frac{\theta^+}{n}\Big)^{{\tau_{\sqrt{n}y} \wedge (nt) } } w_{nt}^{\th/n}(\sqrt{n}y) \\
&\quad + E_{\sqrt{n}x}\Big(\Big(1+\frac{\theta^+}{n}\Big)^{ nt} w_{nt}^{\th/n}(\W_{ \bar \tau_{\sqrt{n}x/2}\wedge  (nt)}) \mathds{1}_{\{\tau_{\sqrt{n} y} > (nt)\wedge \bar\tau_{\sqrt{n} x/2} \}}\Big) \\
&=:I_{1}(n,x,y,t)+I_{2}(n,x,y,t),
\end{aligned}
\ee
where {in the second inequality} we  used the monotonicity of $w^{\th/n}_{nt}(x)$ in $x$.

We first handle $I_{1}(n,x,y,t)$. By Lemma \ref{lemma-pgf}, for all $0\leq y-x\leq 1$,
\be \label{rf2}
\big | I_{1}(n,x,y,t)-w_{nt}^{\th/n}(\sqrt{n}y) \big |
\leq C(T)(y-x)w_{nt}^{\th/n}(\sqrt{n}y),
\ee
which, by Lemma \ref{tight-n-w}(i), is bounded by $C(y-x)/n$.

Next we bound $I_{2}(n,x,y,t)$. Using the monotonicity of $w^{\th/n}_{nt}(x)$ in $x$ again we have
\begin{equation}\label{eq:I2}
\aligned
&I_{2}(n,x,y,t)\\
 \leq & e^{\theta^+ t }E_{\sqrt{n}x}\Big(w_{nt}^{\th/n}(\W_{ \bar \tau_{\sqrt{n}x/2}\wedge  (nt)})\mathds{1}_{\{\tau_{\sqrt{n} y} > (nt)\wedge \bar\tau_{\sqrt{n} x/2})\}}\Big) \\
\leq & e^{\theta^+ t}w_{nt}^{\th/n}(\sqrt{n}x/2-R)  P_{\sqrt{n}x}\big({\tau_{\sqrt{n} y} }> (nt)\wedge \bar\tau_{\sqrt{n} x/2}\big) \\
\leq & e^{\theta^+ t}w_{nt}^{\th/n}(\sqrt{n}x/2-R) \Big(P_{\sqrt{n}x}\big({\tau_{\sqrt{n} y}} > nt\big)  + P_{\sqrt{n}x}\big(\tau_{\sqrt{n} y} >  \bar\tau_{\sqrt{n} x/2}\big)\Big) \\
\leq & e^{\theta^+ t}w_{nt}^{\th/n}(\sqrt{n}x/2-R)\Big(C_{1}e^{-C_{2}t/(x-y)^{2}}+\frac{\sqrt{n}(y-x) + R}{\sqrt{n}(y-x/2)}\Big),
\endaligned
\end{equation}
where in the last inequality we used Lemma \ref{Lemma-uni2}(ii) and the Optional Stopping Theorem.
The conclusion follows from Lemma \ref{tight-n-w}(i).
\end{proof}

\begin{remark}\label{rmk:I_2_5th_moment}
The bound in \eqref{eq:I2} is a place where we need the random walk to have a finite 5th moment. Specifically, when $F_{RW}$ has an unbounded support but a finite 5th moment,  we can estimate term $I_{2}(n,x,y,t)$ as follows. Note that to prove the conclusion in the lemma, by the triangular inequality, it suffices to prove for the case when $y=x+1/\sqrt{n},$ and we want to show that $n I_{2}(n,x,y,t)=O(1/\sqrt{n})$.  We have
\begin{equation}\label{eq:I2_2}
\aligned
&I_{2}(n,x,y,t) \\
\leq & e^{\theta^+ t }E_{\sqrt{n}x}\Big(w_{nt}^{\th/n}(\W_{ \bar \tau_{\sqrt{n}x/2}\wedge  (nt)})\mathds{1}_{\{\tau_{\sqrt{n} y} > (nt)\wedge \bar\tau_{\sqrt{n} x/2})\}}\Big) \\
\leq &e^{\theta^+ t}w_{nt}^{\th/n}(\sqrt{n}x/4) \Big(P\big(\tau_1 > nt\big)  + P\big(\tau_1 >  \bar\tau_{-\sqrt{n} x/2}\big)\Big) \\
&\quad+  e^{\theta^+ t}
{P_{\sqrt{n}x}\big(\W_{\bar\tau_{\sqrt{n} x/2}}< \sqrt{n}x/4\big)} \\
\leq & e^{\theta^+ t}w_{nt}^{\th/n}(\sqrt{n}x/4)\Big(C_{1}e^{-C_{2} n t}+C_3/\sqrt{n}\Big) + {e^{\theta^+ t}}C_4/(n\sqrt{n}),
\endaligned
\end{equation}
where in the last inequality we used the following estimates.
\begin{enumerate}[(i)]
  \item $P\big(\tau_{1} >  \bar\tau_{-k}\big) = O(1/k)$ as $k\to\infty$. To see this, note that by the Optional Stopping Theorem,
  \[
    E(\W_{\bar\tau_{1,-k}} \mathds{1}_{\{\tau_{1} <  \bar\tau_{-k}\}} ) = - E(\W_{\bar\tau_{1,-k}} \mathds{1}_{\{\tau_{1} >  \bar\tau_{-k}\}} )
    \geq k P\big(\tau_{1} >  \bar\tau_{-k}\big).
  \]
  Moreover, by exercise 6, p.232 of \cite{Spitzer}, $E(\W_{\tau_1})<\infty$ if $F_{RW}$ has finite variance, hence $P\big(\tau_{1} >  \bar\tau_{-k}\big) = O(1/k)$.
  \item $P\big(\W_{\bar \tau_{-k}} \leq - \alpha k\big) = O(1/k^{3})$ as $k\to\infty$. This follows from Lemma 10 (and its proof) in \cite{LS15}, by which we have, if  $F_{RW}$ has  a finite 5th moment, then the limiting overshoot distribution has a finite 3rd moment. \\
\end{enumerate}
\end{remark}

\begin{lemma}  \label{lemma-reg-time}
For any $0<t_0<T<\infty$ and $x_{0}>0$, there exist $\dl>0$, $\n_0>0$ and $C(t_0,T,x_0)>0$ such that for all $n>\n_0$, $t\in [t_0,T]$, $nt \leq m \leq n (t+\dl)$ and $ x_{0}< x<\infty$, we have
\bn
n\big|w^{\th/n}_{m}(\sqrt{n}x)-w^{\th/n}_{nt}(\sqrt{n}x) \big| \leq \eps.
\en
\end{lemma}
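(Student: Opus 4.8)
Since $M_m\ge M_{[nt]}$ for $m\ge[nt]$, we have $w^{\th/n}_m\ge w^{\th/n}_{[nt]}$ pointwise, so it suffices to bound $n\big(w^{\th/n}_m(\sqrt{n}x)-w^{\th/n}_{[nt]}(\sqrt{n}x)\big)$ above by $\eps$. Set $j:=m-[nt]$, so $0\le j\le[n\dl]+1$. The plan is \emph{not} to run the discrete Feynman--Kac formula over all $[nt]$ generations, but only over the $j$ extra ones: apply Lemma~\ref{lemma-disc-f-c-upto}(i) to $w^{\th/n}_m(\sqrt{n}x)$ with $k=[nt]$ (so that $m-k=j$), with lower barrier $y=\sqrt{n}(x-\eta)$ for a small constant $\eta\in(0,x_0/8)$ to be chosen, and with no upper barrier ($z=\infty$). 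Writing $\bar\tau$ for $\bar\tau_{\sqrt{n}(x-\eta)}$, this gives
\[
w^{\th/n}_m(\sqrt{n}x)=E_{\sqrt{n}x}\Big[\Big(1+\frac{\th}{n}\Big)^{\bar\tau\wedge j}\,w^{\th/n}_{m-\bar\tau\wedge j}(\W_{\bar\tau\wedge j})\prod_{i=1}^{\bar\tau\wedge j}\big[1-H^{\th/n}(w^{\th/n}_{m-i}(\W_i))\big]\Big].
\]

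Using $0\le H^{\th/n}\le1$ (by~\eqref{H-bnd}), the bound $(1+\th/n)^{\bar\tau\wedge j}\le e^{2|\th|\dl}$ valid for $n$ large, the fact that the reflected walk has range $R$ so that $\W_{\bar\tau\wedge j}\ge\sqrt{n}(x-\eta)-R$ on the whole sample space, and the monotonicity of $w^{\th/n}_k(\cdot)$ (nondecreasing in $k$, nonincreasing in the space variable), I would split the expectation over $\{\bar\tau\ge j\}$ (where $\bar\tau\wedge j=j$, hence $m-\bar\tau\wedge j=[nt]$ and $\W_{\bar\tau\wedge j}=\W_j$) and over $\{\bar\tau<j\}$ (where $\bar\tau\wedge j=\bar\tau$ and $m-\bar\tau\le m$) to obtain
\[
w^{\th/n}_m(\sqrt{n}x)\le e^{2|\th|\dl}\,w^{\th/n}_{[nt]}\big(\sqrt{n}(x-\eta)-R\big)+e^{2|\th|\dl}\,w^{\th/n}_{m}\big(\sqrt{n}(x-\eta)-R\big)\,P_{\sqrt{n}x}(\bar\tau<j).
\]

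For the exceptional event, $\{\bar\tau<j\}$ forces $\max_{0\le i\le j}|\W_i-\W_0|\ge\sqrt{n}\eta$, so by Lemma~\ref{Lemma-uni2}(i) together with $j\le[n\dl]+1$ we get $P_{\sqrt{n}x}(\bar\tau<j)\le C_{\ref{Lemma-uni2}}e^{-\beta_{\ref{Lemma-uni2}}\eta^2/(2\dl)}$ for $n$ large. Moreover $\sqrt{n}(x-\eta)-R\ge\sqrt{n}\,x_0/4$ for $n$ large, and $m\le n(T+1)$, so Lemma~\ref{tight-n-w}(i) (whose bound, by its proof, does not depend on the number of generations, since it only uses $w^{\th/n}_k(\cdot)\le P^{\th/n}_1(M\ge\cdot)$) yields a constant $C_1=C_1(x_0)$ with $n\,w^{\th/n}_m(\sqrt{n}(x-\eta)-R)\le C_1$ and $n\,w^{\th/n}_{[nt]}(\sqrt{n}(x-\eta)-R)\le C_1$. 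Subtracting $w^{\th/n}_{[nt]}(\sqrt{n}x)$, multiplying by $n$, and writing $e^{2|\th|\dl}=1+(e^{2|\th|\dl}-1)$, this gives
\[
n\big(w^{\th/n}_m(\sqrt{n}x)-w^{\th/n}_{[nt]}(\sqrt{n}x)\big)\le n\big(w^{\th/n}_{[nt]}(\sqrt{n}(x-\eta)-R)-w^{\th/n}_{[nt]}(\sqrt{n}x)\big)+(e^{2|\th|\dl}-1)C_1+e^{2|\th|\dl}C_1C_{\ref{Lemma-uni2}}e^{-\beta_{\ref{Lemma-uni2}}\eta^2/(2\dl)}.
\]
The first term on the right is at most $C_2(t_0,T,x_0)(\eta+R/\sqrt{n})$ by the spatial regularity Lemma~\ref{lemma-spat-reg}, applied with $x_0$ replaced by $x_0/2$ and using $|(x-\eta-R/\sqrt{n})-x|\le1$ for $n$ large. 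To finish, given $\eps>0$ I would first pick $\eta$ with $C_2\eta<\eps/4$, then $\dl$ small (depending on $\eta$) so that $(e^{2|\th|\dl}-1)C_1<\eps/4$ and $e^{2|\th|\dl}C_1C_{\ref{Lemma-uni2}}e^{-\beta_{\ref{Lemma-uni2}}\eta^2/(2\dl)}<\eps/4$, and finally $\n_0$ large enough that $C_2R/\sqrt{n}<\eps/4$ and all the ``$n$ large'' conditions above hold; the claim then follows.

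The main difficulty is precisely what this argument is set up to avoid: if one instead stopped the reflected walk at its first descent below a fixed barrier such as $\sqrt{n}x/2$, the time-regularity modulus would reappear on the right-hand side evaluated at a shifted position ($\approx\sqrt{n}x/2$) and at a shifted, \emph{uncontrolled} time, and that recursion does not visibly close. Running the Feynman--Kac representation over only $j\le[n\dl]+1$ generations removes this obstruction: with probability $1-O\big(e^{-\beta_{\ref{Lemma-uni2}}\eta^2/(2\dl)}\big)$ the reflected walk cannot move by order $\sqrt{n}$ in so few steps, so $w^{\th/n}_m(\sqrt{n}x)$ is pinned to $w^{\th/n}_{[nt]}$ evaluated within $\eta$ of $\sqrt{n}x$, which spatial regularity handles. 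The remaining subtlety is absorbing the factor $n$ in the exceptional term: this works exactly because $w^{\th/n}_m$ is $O(1/n)$ at positions bounded away from the origin (Lemma~\ref{tight-n-w}(i), uniformly in the number of generations), so $n$ times an exceptional contribution of size $O(1/n)\cdot e^{-\beta_{\ref{Lemma-uni2}}\eta^2/(2\dl)}$ is genuinely small once $\dl\ll\eta^2$.
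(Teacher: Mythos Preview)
Your proof is correct and follows essentially the same route as the paper: apply the discrete Feynman--Kac formula (Lemma~\ref{lemma-disc-f-c-upto}(i)) over only the extra $j=m-[nt]$ generations with a lower barrier at $\sqrt{n}(x-\eta)$, split according to whether the barrier is hit, then control the ``no-hit'' piece via the spatial regularity Lemma~\ref{lemma-spat-reg} and the ``hit'' piece via Lemma~\ref{Lemma-uni2}(i) combined with the uniform bound of Lemma~\ref{tight-n-w}(i). The paper's $J_1,J_2$ decomposition and your two terms are the same objects up to whether the boundary case $\bar\tau=j$ is grouped with the first or second event; your remark that Lemma~\ref{tight-n-w}(i) is in fact uniform in $t$ (since its proof bounds by $P_1^{\th/n}(M\ge\cdot)$) is exactly what the paper also uses implicitly when it invokes that lemma at time $n(t+\delta)$.
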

\begin{proof} From monotonicity it is enough to prove the lemma when $m =[ n(t+\dl)] $.
Let $ x_{0}< x<\infty$ and let $\xi>0$ be a small number to be chosen later.  From the bound \eqref{H-bnd} and  Lemma \ref{lemma-disc-f-c-upto}(i) with $y=x-\xi$, $z=\infty$ and $k=[nt]$ we get
\be \label{rr0}
\begin{aligned}
& w^{\th/n}_{m}(\sqrt{n}x) \\
\leq& E_{\sqrt{n}x}\Big(\Big(1+\frac{\th}{n}\Big)^{\bar\tau_{\sqrt{n}(x-\xi)}\wedge (m-[nt] )} w_{m-(m-[nt])\wedge \bar\tau_{\sqrt{n}(x-\xi)}}^{\th/n}(\W_{{\bar\tau}_{\sqrt{n}(x-\xi)}\wedge (m-[nt])})\Big) \\
\leq& E_{\sqrt{n}x}\Big(\Big(1+\frac{\th}{n}\Big)^{m-[nt]}w_{[nt]}^{\th/n}(\W_{m-[nt]})\mathds{1}_{\{\bar \tau_{\sqrt{n}(x-\xi)}> m-[nt]\}}\Big) \\
&\quad + E_{\sqrt{n}x}\Big(\Big(1+\frac{\theta^+}{n}\Big)^{m-[nt] } w_{m- \bar\tau_{\sqrt{n}(x-\xi)}}^{\th/n}(\W_{\bar\tau_{\sqrt{n}(x-\xi)}})\mathds{1}_{\{\bar\tau_{\sqrt{n}(x-\xi)} \leq m-[nt]\}}\Big) \\
=&: J_{1}(m,n,x)+J_{2}(m,n,x).
\end{aligned}
\ee
Note that $ m-[nt] \leq \dl n +1$. Using the monotonicity of $w^{\th/n}_{k}(x)$ in  $x$ we get that
\bd
\begin{aligned}
 J_{1}(m,n,x) &\leq e^{\theta^+(\dl+1/n) }E_{\sqrt{n}x}\Big(w_{nt}^{\th/n}(\W_{m-[nt]})\mathds{1}_{\{\bar\tau_{\sqrt{n}(x-\xi)}> m-[nt]\}}\Big)  \\
 &\leq e^{\theta^+(\dl+1/n) }w_{nt}^{\th/n}(\sqrt{n}(x-\xi)) .
\end{aligned}
\ed
By Lemma \ref{lemma-spat-reg} and Lemma \ref{tight-n-w}(i), if $\xi$ and $\dl$ are small enough then
\be \label{rr1}
 J_{1}(m,n,x) \leq e^{\theta^+(\dl+1/n) }w_{nt}^{\th/n}(\sqrt{n}x) +\frac{\eps}{n}
  \leq w_{nt}^{\th/n}(\sqrt{n}x) +\frac{2\eps}{n}.
\ee
As to $J_{2}(m,n,x)$, using the monotonicity of $w^{\th/n}_{k}(x)$ in $k$ and $x$, the finite range of $\W$ and noting that $m-[nt] \leq \dl n+1$, we have
\be \label{j-2-fr}
\begin{aligned}
 J_{2}(m,n,x) &\leq e^{\theta^+(\dl+1/n) }E_{\sqrt{n}x}\Big(w_{m- \bar\tau_{\sqrt{n}(x-\xi)}}^{\th/n}(\W_{\bar \tau_{\sqrt{n}(x-\xi)}})\mathds{1}_{\{\bar \tau_{\sqrt{n}(x-\xi)} \leq \dl n+1\}}\Big) \\
 &\leq e^{\theta^+(\dl+1/n) }E_{\sqrt{n}x}\Big(w_{m}^{\th/n}(\W_{\bar \tau_{\sqrt{n}(x-\xi)}})\mathds{1}_{\{\bar \tau_{\sqrt{n}(x-\xi)} \leq \dl n+1\}}\Big) \\
  &\leq e^{\theta^+(\dl+1/n) }w_{n(t+\dl)}^{\th/n}\big(\sqrt{n}(x-\xi)-R\big)\cdot P_{\sqrt{n}x}\big(\bar \tau_{\sqrt{n}(x-\xi)} \leq \dl n+1\big).
\end{aligned}
\ee
By Lemma \ref{Lemma-uni2}(i), there exist $C_2>0$ and $
\beta>0$ such that for all $x\geq x_0$,
\bn
P_{\sqrt{n}x}\big(\bar\tau_{\sqrt{n}(x-\xi)} \leq  n \dl + 1\big)
&\leq & C_2e^{-\beta \frac{\xi^2}{\dl} }.
\en
By choosing $\dl$ to be small enough and using  Lemma \ref{tight-n-w}(i) we get
\be \label{rr2}
J_{2}(m,n,x) \leq  \frac{\eps}{n}.
\ee
The conclusion follows.
\end{proof}

\begin{corollary}  \label{coroll-unif}
For any $\eps>0$, $0<t_0<T<\infty $ and $x_{0}>0$, there exist $\dl >0$ and $\n_0>0$ such that for all $n>\n_0$, $t_0\leq s, t\leq T$ with $|s-t|\leq \dl$ and $ x, y \in [x_0,\infty)$ satisfying $|x-y|\leq \dl $, we have
\[
n\big|w^{\th/n}_{nt}(\sqrt{n}y)-w^{\th/n}_{ns}(\sqrt{n}x) \big| \leq \eps.
\]
\end{corollary}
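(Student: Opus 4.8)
The statement is just the joint space--time modulus-of-continuity bound obtained by gluing together the spatial estimate of Lemma~\ref{lemma-spat-reg} and the temporal estimate of Lemma~\ref{lemma-reg-time} via a triangle inequality. Concretely, for admissible $n,s,t,x,y$ I would write
\[
n\big|w^{\th/n}_{nt}(\sqrt{n}y)-w^{\th/n}_{ns}(\sqrt{n}x)\big|
\le n\big|w^{\th/n}_{nt}(\sqrt{n}y)-w^{\th/n}_{nt}(\sqrt{n}x)\big|
+n\big|w^{\th/n}_{nt}(\sqrt{n}x)-w^{\th/n}_{ns}(\sqrt{n}x)\big|,
\]
and bound the first term (a pure change in space, at the common generation $[nt]$) using Lemma~\ref{lemma-spat-reg} and the second term (a pure change in time, at the common point $\sqrt{n}x$) using Lemma~\ref{lemma-reg-time}.

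More precisely, I would first fix the parameters produced by the two lemmas. Applying Lemma~\ref{lemma-spat-reg} with $t_0,T$ and $x_0/2$ in place of $x_0$ gives $\n_1$ and $C=C(t_0,T,x_0)$ with $n\big|w^{\th/n}_{nt}(\sqrt{n}y)-w^{\th/n}_{nt}(\sqrt{n}x)\big|\le C|x-y|$ for all $n\ge\n_1$, $t\in[t_0,T]$ and $x,y>x_0/2$ in $\zz{Z}/\sqrt{n}$ with $|x-y|\le1$; so once $\dl$ is small this term is at most $\eps/2$ whenever $|x-y|\le\dl$. For the temporal term I may assume $s\le t$ (otherwise interchange $s$ and $t$, which only flips the sign inside the absolute value), and then apply Lemma~\ref{lemma-reg-time} with tolerance $\eps/2$, time $s\in[t_0,T]$, point $x>x_0$, and generation $m=[nt]$, which is legitimate because $[ns]\le[nt]\le[n(s+\dl)]$ when $t\le s+\dl$; this furnishes $\dl_2>0$ and $\n_2$ so that the term is $\le\eps/2$ for $n\ge\n_2$ as soon as $\dl\le\dl_2$. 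Taking $\n_0=\max\{\n_1,\n_2\}$ and $\dl$ equal to the minimum of $\dl_2$, of the threshold from Lemma~\ref{lemma-spat-reg}, and of a quantity of order $\eps/C$, and adding the two bounds, yields the corollary.

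I do not expect a genuine obstacle here: the substantive estimates were already carried out in Lemmas~\ref{tight-n-w}, \ref{lemma-spat-reg} and~\ref{lemma-reg-time}, and what remains is $\eps/2+\eps/2$ bookkeeping. The only two points that need a moment's care are: (a) Lemma~\ref{lemma-reg-time} is one-sided in the time variable ($m\ge nt$), which is why I reduce to $s\le t$ above; and (b) $w^{\th/n}_k(\cdot)$ lives on $\zz{Z}$ and is evaluated via the floor convention $\sqrt{n}x\mapsto[\sqrt{n}x]$, so the lattice spacing corresponding to $|x-y|\le\dl$ is at most $\dl+1/\sqrt{n}$ and the nearest lattice points still exceed $x_0/2$; enlarging $\n_0$ so that $1/\sqrt{\n_0}$ is small keeps us inside the hypotheses of Lemma~\ref{lemma-spat-reg} and, after shrinking $\dl$ once more, preserves the $\eps/2$ bound (equivalently, one may phrase everything for the linearly interpolated function $u^{\th/n}$, for which point (b) is vacuous). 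The purpose of this corollary is to supply the joint equicontinuity of $\big(nw^{\th/n}_{n\cdot}(\sqrt{n}\cdot)\big)_{n\ge1}$ on compact subsets of $(0,\infty)\times(0,\infty)$, which together with the uniform bound of Lemma~\ref{tight-n-w} feeds the compactness (Arzel\`a--Ascoli) argument used to pass to the limit in Theorem~\ref{theorem-weak-con}.
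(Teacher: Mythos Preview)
Your proposal is correct and matches the paper's intent: the paper states this corollary without proof, treating it as immediate from Lemmas~\ref{lemma-spat-reg} and~\ref{lemma-reg-time}, and your triangle-inequality splitting into a pure-space and a pure-time increment is exactly the intended argument. Your remarks about the one-sidedness of Lemma~\ref{lemma-reg-time} and the lattice rounding are the only minor points requiring care, and you handle them appropriately.
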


\begin{proposition}  \label{prop-conv-sub}
For any sequence  $\{n_{i}\}$ of positive integers that increase to infinity, there exists a subsequence $\{n_{i_j}\}$ along which the functions $\left( n_{i_j}w^{\th/n_{i_j}}_{[n_{i_j} t]}\left(\sqrt{n_{i_j}} x \right)\right)_{t>0, x>0}$ converge. The convergence is uniform over any compact region inside $\{(t,x): t >0, x>0\}$. Moreover,
any subsequential limiting function $\phi(t,x)$ is increasing in~$t$, decreasing and Lipschitz in $x$ with
\be \label{bnd-cond1}
\lim_{x\rr \infty}\phi(t,x)=0,  \quad \textrm{for each } t >0.
\ee
\end{proposition}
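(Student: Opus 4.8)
The plan is to invoke the Arzel\`a--Ascoli theorem together with a diagonal extraction; the three analytic ingredients needed are already available from the preceding lemmas. Namely: \emph{(a) uniform boundedness} of $\big(n\,w^{\th/n}_{nt}(\sqrt n x)\big)_{n\ge1}$ on any set $[t_0,T]\times[x_0,\infty)$ with $t_0,x_0>0$, which follows from Lemma~\ref{tight-n-w}(i) together with the monotonicity of $w^{\th/n}_k(x)$ in $k$ (so that $\sup_{t\le T} n\,w^{\th/n}_{nt}(\sqrt n x)\le n\,w^{\th/n}_{nT}(\sqrt n x)$, to which Lemma~\ref{tight-n-w}(i) applies with $t=T$); \emph{(b) joint equicontinuity} in $(t,x)$ on such sets, which is precisely Corollary~\ref{coroll-unif} (the threshold $\n_0$ there does not depend on $n$, so the bound is uniform for $n$ large, which suffices since we extract a subsequence with $n_{i_j}\rr\infty$); and \emph{(c) uniform smallness at infinity}, namely $\sup_{n\ge1} n\,w^{\th/n}_{nt}(\sqrt n x)\rr 0$ as $x\rr\infty$ for each fixed $t>0$, which is Lemma~\ref{tight-n-w}(ii).

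First I would fix the exhaustion of the open quadrant $\{(t,x):t>0,\,x>0\}$ by the compact boxes $R_m:=[1/m,m]\times[1/m,m]$, $m\ge1$. For each fixed $m$, ingredients (a) and (b) (applied with $t_0=x_0=1/m$, $T=m$) say that the functions $\big(n\,w^{\th/n}_{nt}(\sqrt n x)\big)_{(t,x)\in R_m}$, indexed by $n$ along the given sequence $\{n_i\}$, are uniformly bounded and equicontinuous on $R_m$, so by Arzel\`a--Ascoli some subsequence converges uniformly on $R_m$. Running $m=1,2,\dots$, extracting nested subsequences and passing to the diagonal one $\{n_{i_j}\}$, we obtain a single subsequence along which $n_{i_j}w^{\th/n_{i_j}}_{[n_{i_j}t]}(\sqrt{n_{i_j}}\,x)$ converges uniformly on every $R_m$, hence uniformly on every compact subset of $\{t>0,\,x>0\}$, since any such compact set lies in some $R_m$. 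Let $\phi(t,x)$ be the limit.

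It then remains to read off the stated properties of $\phi$ from those of the prelimit. Monotonicity in $t$: since $w^{\th/n}_k(x)=P^{\th/n}_1(M_k\ge x)$ is non-decreasing in $k$ and $[nt]$ is non-decreasing in $t$, the map $t\mapsto n\,w^{\th/n}_{[nt]}(\sqrt n x)$ is non-decreasing, and this passes to the pointwise limit. Monotonicity in $x$: $w^{\th/n}_k(\cdot)$ is non-increasing, so $\phi(t,\cdot)$ is non-increasing. Lipschitz continuity in $x$: Lemma~\ref{lemma-spat-reg} gives, for $n$ large, $t$ in a compact interval bounded away from $0$ and $x,y\ge x_0>0$ with $|x-y|\le1$, a constant $C=C(t_0,T,x_0)$ with $n\,|w^{\th/n}_{nt}(\sqrt n y)-w^{\th/n}_{nt}(\sqrt n x)|\le C|x-y|$; chaining over unit increments removes the restriction $|x-y|\le1$ with the same $C$ (using that $\min(x,y)\ge x_0$ throughout), and letting $n=n_{i_j}\rr\infty$ yields $|\phi(t,y)-\phi(t,x)|\le C|x-y|$ on $[x_0,\infty)$. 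Finally \eqref{bnd-cond1}: by Lemma~\ref{tight-n-w}(ii), for each $t>0$ one has $\sup_{n\ge1} n\,w^{\th/n}_{nt}(\sqrt n x)\le e^{-\beta x^2/t}\varrho_t(x)$ with $\varrho_t(x)\rr0$ as $x\rr\infty$, whence $\phi(t,x)=\lim_j n_{i_j}w^{\th/n_{i_j}}_{[n_{i_j}t]}(\sqrt{n_{i_j}}\,x)\rr 0$ as $x\rr\infty$.

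Since all the quantitative estimates have been carried out in Lemmas~\ref{tight-n-w}--\ref{lemma-reg-time} and Corollary~\ref{coroll-unif}, the remaining work is essentially bookkeeping; I do not expect a genuine obstacle. The one point requiring care is that the domain is the \emph{open} quadrant, so one must simultaneously handle the corner $t,x\dr 0$ and the direction $x\rr\infty$ — this is exactly why the compact-exhaustion-plus-diagonal argument is used, and why ingredient (c) (the uniform decay of Lemma~\ref{tight-n-w}(ii)) is needed for the boundary property. A secondary point is to make sure the bounds in (a) are genuinely uniform in $n$ over each box, for which one uses the monotonicity of $w^{\th/n}_k$ in $k$ to reduce the time variable to the endpoint $t=T$. (A priori the limit $\phi$ may depend on the chosen subsequence; its uniqueness will be established afterwards through the PDE characterization.)
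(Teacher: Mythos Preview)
Your proposal is correct and follows essentially the same approach as the paper: the paper's proof simply states that the conclusions follow from Lemma~\ref{tight-n-w}, Lemma~\ref{lemma-spat-reg}, Corollary~\ref{coroll-unif}, the Arzel\`a--Ascoli theorem and a standard diagonal argument, which is exactly the line you have spelled out in detail.
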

\begin{proof}
The conclusions follow from Lemma \ref{tight-n-w}, Lemma \ref{lemma-spat-reg}, Corollary \ref{coroll-unif}, Arzel\`{a}-Ascoli Theorem and a standard diagonal argument.
\end{proof}

\section{Scaling Limit}\label{section-pde-lim}
In this section, we prove that the limiting function $\phi(\cdot)$ from Proposition \ref{prop-conv-sub} satisfies a nonlinear  PDE with an infinite boundary condition at $x=0$. Then we prove uniqueness of solutions to the PDE.

The main difficulty with the analysis of the limiting function is that it satisfies a nonlinear parabolic PDE with singularity at the origin; see Corollary \ref{corollary-pde} and Lemma \ref{lemma-boundary}. In Proposition \ref{prop-phi-con} and Lemma \ref{lemma-boundary}, we develop probabilistic arguments  to characterize the limiting function. We further use PDE methods to prove uniqueness of the solution to equation (\ref{kpp-uniq}) (Proposition \ref{prop-uniq}) and to link the solution with the tail distribution of the support of super-Brownian motion (Theorem \ref{theorem-weak-con}).

For any continuous process  $\{Y_{t}\}_{t\geq 0}$, define
\be  \label{tau-B}
\bar \tau^{Y}_{x}  =\min\{t\geq 0: Y_{t} \leq x\}, \mbox{ and }\bar \tau^{Y} =\bar \tau^{Y}_{0}.
\ee
The following proposition gives a Feynman-Kac representation of $\phi(t,x)$.

\begin{proposition} \label{prop-phi-con}
Let $(\phi(t,x))_{t>0, x>0}$ be any sub-sequential limiting function from Proposition \ref{prop-conv-sub}. Then for any $x_0>0$ and for all  $x > x_0, \, t>0,$ we have
\begin{equation}\label{eq:phi_FK}
\aligned
&\phi(t,x) =\\
 E_{x} & \left(\exp\left( \int_{0}^{t\wedge \bar \tau^{\sigma_R B}_{x_0}}\left(\th-\frac{\sigma^{2}}{2}\phi\left(t-s,\sigma_R B_{s}\right)\right)ds\right)
  \phi\left(t - t\wedge \bar \tau^{\sigma_R B}_{ x_0} , \sigma_R B_{t\wedge \bar \tau^{\sigma_R B}_{ x_0}}\right)\right),
\endaligned
\end{equation}
where under $P_{x}$, $\{B_{t}\}_{t\geq 0}$ is a standard Brownian motion  starting at $x/\sigma_R$.
\end{proposition}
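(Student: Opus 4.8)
The plan is to pass to the limit, along the fixed subsequence $\{n_{i_j}\}$ from Proposition~\ref{prop-conv-sub}, in the discrete Feynman--Kac identity of Lemma~\ref{lemma-disc-f-c-upto}(ii), applied with $y=[\sqrt n\,x_0]$, $z=\infty$ (so $\bar\tau_{y,z}=\bar\tau_{\sqrt n x_0}$) and $m=[nt]$. Writing $\Phi_n(s,z):=n\,w^{\th/n}_{ns}(\sqrt n z)$ and $\kappa_n:=\bar\tau_{\sqrt n x_0}\wedge nt$, multiplying that identity by $n$ gives
\[
\Phi_n(t,x)=E_{\sqrt n x}\Bigl[\Bigl(1+\tfrac{\th}{n}\Bigr)^{\kappa_n}\,\Phi_n\Bigl(\tfrac{nt-\kappa_n}{n},\tfrac{\W_{\kappa_n}}{\sqrt n}\Bigr)\prod_{j=1}^{\kappa_n}\bigl(1-H^{\th/n}(w^{\th/n}_{nt-j}(\W_j))\bigr)\Bigr].
\]
By Donsker's invariance principle the rescaled reflected walk $(\W_{ns}/\sqrt n)_{s\ge0}$ started from $\sqrt n x$ converges weakly to $(\sigma_R B_s)_{s\ge0}$ with $\sigma_R B_0=x$; by the Skorokhod representation theorem we may assume this convergence is uniform on compacts, almost surely. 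It then suffices to show that the bracketed expression converges a.s.\ to the integrand on the right of \eqref{eq:phi_FK} and is uniformly bounded, and to conclude by bounded convergence, since the left side tends to $\phi(t,x)$ along the subsequence.

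Several factors are routine. Since $\W$ has range $R$, on $\{\bar\tau_{\sqrt n x_0}\ge nt\}$ we have $\W_{nt}\ge[\sqrt n x_0]-R\ge1$ for $n$ large, so $w^{\th/n}_0(\W_{nt})=0$ and the contribution of this event vanishes; this matches reading the factor $\phi(0,\sigma_R B_t)$ in \eqref{eq:phi_FK} as $0$ on $\{\bar\tau^{\sigma_R B}_{x_0}\ge t\}$. On the complement one has $\kappa_n/n\to\bar\tau^{\sigma_R B}_{x_0}\wedge t<t$ a.s.\ (the hitting time of a level by Brownian motion started strictly above it is a.s.\ a continuity point of the hitting-time map), $\W_{\kappa_n}/\sqrt n\to x_0$ (overshoot $\le R$), and $(1+\th/n)^{\kappa_n}=\exp(\tfrac{\kappa_n}{n}\th+O(1/n))\to e^{\th(\bar\tau^{\sigma_R B}_{x_0}\wedge t)}$. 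Combined with the uniform-on-compacts convergence $\Phi_n\to\phi$, the terminal factor converges to $\phi(t-\bar\tau^{\sigma_R B}_{x_0}\wedge t,\sigma_R B_{\bar\tau^{\sigma_R B}_{x_0}\wedge t})$. Boundedness of the bracket follows from $(1+\th/n)^{\kappa_n}\le e^{\th^+ t}$, $0\le1-H^{\th/n}\le1$ by \eqref{H-bnd}, and $\Phi_n\le C$ on $\{z\ge x_0/2\}$ by Lemma~\ref{tight-n-w}(i) (the spatial argument of the terminal $\Phi_n$ stays $\ge x_0-R/\sqrt n$).

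The substantive step is the product. For $1\le j\le\kappa_n$ one has $\W_j\ge[\sqrt n x_0]-R$, so by Lemma~\ref{tight-n-w}(i) together with monotonicity of $w^{\th/n}_k$ in $k$, $w^{\th/n}_{nt-j}(\W_j)\le C/n$ uniformly in $j,n$; with \eqref{2H-def1} this gives $H^{\th/n}(w^{\th/n}_{nt-j}(\W_j))\le C'/n$, hence $\log(1-H^{\th/n}(\cdot))=-H^{\th/n}(\cdot)+O(1/n^2)$, and using $H^{\th/n}(s)=\tfrac{\wt\sigma^2(\th/n)}{2(1+\th/n)}s(1+O(s))$ and summing the $\kappa_n\le nt$ terms,
\[
\log\prod_{j=1}^{\kappa_n}\bigl(1-H^{\th/n}(w^{\th/n}_{nt-j}(\W_j))\bigr)=-\frac{\wt\sigma^2(\th/n)}{2(1+\th/n)}\,\frac1n\sum_{j=1}^{\kappa_n}\Phi_n\Bigl(\tfrac{nt-j}{n},\tfrac{\W_j}{\sqrt n}\Bigr)+o(1).
\]
By \eqref{assum-sig-gam} the prefactor tends to $\sigma^2/2$. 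To identify the Riemann sum as $\int_0^{\bar\tau^{\sigma_R B}_{x_0}\wedge t}\phi(t-s,\sigma_R B_s)\,ds$, split it at $j=[n(t-\eps)]$ and at $\{\W_j/\sqrt n>K\}$: the near-$t$ part is $\le C\eps$ by Lemma~\ref{tight-n-w}(i), the large-$z$ part is small uniformly in $n$ by Lemma~\ref{tight-n-w}(ii), and on the remaining indices the arguments of $\Phi_n$ lie in the compact rectangle $[\eps/2,t]\times[x_0-1,K]$, where $\Phi_n\to\phi$ uniformly and $\phi$ is uniformly continuous; combined with the a.s.\ uniform path convergence this gives, after $\eps\dr0$ and $K\rr\infty$, the claimed limit. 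Hence the product converges a.s.\ to $\exp(-\tfrac{\sigma^2}{2}\int_0^{\bar\tau^{\sigma_R B}_{x_0}\wedge t}\phi(t-s,\sigma_R B_s)\,ds)$, which with the $e^{\th(\bar\tau^{\sigma_R B}_{x_0}\wedge t)}$ factor yields exactly the exponential in \eqref{eq:phi_FK}. Passing to the limit under $E_{\sqrt n x}$ by bounded convergence completes the proof. The main obstacle is precisely this last step: controlling the Riemann sum near the temporal boundary $\{t-s=0\}$ (where $\Phi_n\to\phi$ is not uniform) and at spatial infinity, which is exactly where the tightness bounds of Lemma~\ref{tight-n-w} are needed.
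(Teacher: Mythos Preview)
Your proposal is correct and follows essentially the same route as the paper: start from the discrete Feynman--Kac identity of Lemma~\ref{lemma-disc-f-c-upto}(ii) with $y=\sqrt n\,x_0$, $z=\infty$, $m=[nt]$, expand the product via \eqref{2H-def1}, and pass to the limit using Donsker's invariance principle, a Skorokhod coupling, and bounded convergence (with Lemma~\ref{tight-n-w}(i) supplying the uniform bound). You are more explicit than the paper in two places---the vanishing of the bracket on $\{\bar\tau_{\sqrt n x_0}\ge nt\}$ via $w_0^{\th/n}\equiv 0$ on positive integers, and the $\eps$/$K$ truncation for the Riemann sum---where the paper just writes ``by the same reasoning as for~(\ref{kq1})''; these are welcome details rather than a different approach.
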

\begin{proof}
For any $x>x_0$ and $t>0$, by Lemma \ref{lemma-disc-f-c-upto}(ii), we have
\[
\aligned
w^{\th/n}_{ nt }(\sqrt{n}x) =&  E_{\sqrt{n} x}\Big(\Big(1+\frac{\th}{n}\Big)^{nt\wedge \bar \tau_{\sqrt{n} x_0}}w_{n t-nt\wedge \bar\tau_{\sqrt{n} x_0} }^{\th/n}(\W_{nt\wedge  \bar\tau_{\sqrt{n} x_0}}) \\
 \quad &\times \prod_{j=1}^{nt\wedge\bar\tau_{\sqrt{n} x_0} }\big[1-H^{\th/n}( w_{n t-j}^{\th/n}\big(\W_{j})\big)\big] \Big).
\endaligned
\]
Using (\ref{2H-def1}) and that $\log(1-x) = -x+ O(x^{2})$ for $|x|<1$, we get
\be
\begin{aligned} \label{gf1}
n w^{\th/n}_{ n t }(\sqrt{n}x)
=&  E_{\sqrt{n} x}\Bigg(\left(1+\frac{\th}{n}\right)^{(nt)\wedge \bar \tau_{\sqrt{n}x_0}}\cdot nw_{n t-nt\wedge\bar  \tau_{\sqrt{n}x_0} }^{\th/n}(\W_{nt\wedge  \bar\tau_{\sqrt{n} x_0}})  \\
&\quad \times\exp\Bigg\{-\sum_{j=1}^{nt\wedge \bar \tau_{\sqrt{n} x_0}}\frac{\wt\sigma^2(\th/n) }{2(1+\th/n)} w_{n t-j}^{\th/n} (\W_{j})\\
&\qquad+\big(nt\wedge \bar \tau_{\sqrt{n} x_0}\big)O\Big(\big(w_{n t}^{\th/n}(\sqrt{n} x_0- R)\big)^{2}\Big)\Bigg\} \Bigg).
\end{aligned}
\ee
By Lemma \ref{tight-n-w}(i), the error term
\[
\left(nt\wedge \bar \tau_{\sqrt{n} x_0}\right)O\left(\left(w_{n t}^{\th/n}(\sqrt{n} x_0- R)\right)^{2}\right) = o(1).
\]
Moreover, by Donsker's invariance principle {(see, e.g., Theorem 8.2 in \cite{billingsley})}, under  $P_{\sqrt{n} x}$,  $\left({\W_{nt}}/(\sqrt{n}\sigma_R)\right)_{t\geq 0}$  converges to a standard Brownian motion $(B_{t})_{t\geq 0}$ starting at $x/\sigma_R$ as $n \rr \infty$. In addition,  $n^{-1}\bar \tau_{\sqrt{n}x_0}$  converges weakly to $\bar \tau^{\sigma_R B}_{ x_0}$; see, for example, Theorem 1 in \cite{Kennedy} and Theorem 3 in \cite{Mirak78}. Using the Skorokhod embedding theorem, strong Markov property and the fact that a Brownian almost surely takes both positive and negative values in any interval $[0,\delta]$, one can further strengthen the previous two convergences to be joint convergence.
Therefore, by our assumption on the convergence of $nw_{[nt] }^{\th/n}(\sqrt{n}x)$ to $\phi(t,x)$, we get
\be \label{kq1}
nw_{n t-nt\wedge \bar\tau_{\sqrt{n} x_0} }^{\th/n}\big(\W_{nt\wedge  \bar\tau_{\sqrt{n} x_0}}\big)
 \Rightarrow  \phi(t - t\wedge \bar \tau^{\sigma_R  B}_{ x_0} ,\sigma_{R}  B_{t\wedge \bar \tau^{\sigma_R B}_{ x_0}}), \quad \textrm{as } n\rr \infty.
\ee
Furthermore, from (\ref{assum-sig-gam}) and (\ref{tilde-sigma}) we get $ \wt\sigma^2(\th/n) \rr  \sigma^2$. By the same reasoning as for (\ref{kq1}) we get
\bq \label{lg1}
&&\frac{\wt\sigma^2(\th/n) }{2(1+\th/n)} \sum_{j=1}^{(nt)\wedge \bar \tau_{\sqrt{n}x_0}}w_{n t-j}^{\th/n}\big(\W_{j}) \nonumber \\
=&&\frac{\wt\sigma^2(\th/n) }{2(1+\th/n)} \cdot \frac{1}{n}\sum_{j=1}^{n (t\wedge  (n^{-1}\bar \tau_{\sqrt{n}x_0}))}  nw^{\th/n}_{n t-j}\Big(\sqrt{n}\frac{ \W_{j}}{\sqrt{n}}\Big)  \nonumber \\
\Rightarrow & &\frac{ \sigma^{2}}{2} \int_{0}^{t\wedge \bar \tau^{\sigma_R B}_{x_0}}\phi(t-s,\sigma_{R} {B_{s}})\,ds, \quad  \textrm{as } n\rr\infty. \nonumber
\eq
Finally, we have
$$
\Big(1+\frac{\th}{n}\Big)^{nt\wedge  \bar \tau_{\sqrt{n} x_0}}  \Rightarrow  \exp\left(\th\, t\wedge \bar \tau^{\sigma_R B}_{x_0}\right),  \quad  \textrm{as } n\rr\infty.
$$
Plugging the above limits into (\ref{gf1}), together with bounded convergence theorem we get the conclusion.
\end{proof}

\begin{corollary} \label{corollary-pde}
Suppose that $(\phi(t,x))_{t>0, x>0}$ is a sub-sequential limiting function from Proposition \ref{prop-conv-sub}. Then it  satisfies the following PDE:
\be \label{kpp}
\frac{\partial \phi}{\partial t}(t,x) = \frac{\sigma_{R}^{2}}{2} \frac{\partial^{2}\phi}{\partial x^{2} }(t,x)+\th\phi(t,x) -\frac{ \sigma^{2}}{2}\phi^{2}(t,x), \quad t >0, \ x >0.
\ee
\end{corollary}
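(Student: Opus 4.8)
The plan is to derive the PDE \eqref{kpp} directly from the Feynman–Kac representation \eqref{eq:phi_FK} established in Proposition~\ref{prop-phi-con}, by exploiting the freedom in the choice of the auxiliary level $x_0$. Fix an arbitrary point $(t,x)$ with $t>0$, $x>0$. First I would choose $x_0 = x/2$ (or any fixed level strictly below $x$), so that for a Brownian path started at $x/\sigma_R$, the exit time $\bar\tau^{\sigma_R B}_{x_0}$ is almost surely positive, and for small $h>0$ one has $h \wedge \bar\tau^{\sigma_R B}_{x_0} = h$ on an event of probability $1 - o(h)$ (indeed $1 - e^{-c/h}$-type probability, since hitting a level at distance bounded away from $0$ within time $h$ is super-exponentially unlikely by the reflection principle). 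On that dominant event, \eqref{eq:phi_FK} with $t$ replaced by $h$ and $x$ fixed — wait, more precisely: I would apply the representation at time $t$ but localize the stopping time. The cleanest route is to run the representation over a short time $h$: using the Markov property of $\phi$ implicit in \eqref{eq:phi_FK} (or re-deriving the analogous identity with $t \wedge \bar\tau$ replaced by $h \wedge \bar\tau^{\sigma_R B}_{x_0}$, which follows from the same optional-stopping argument applied to the discrete martingale up to the truncated time), one gets
\[
\phi(t,x) = E_x\left(\exp\left(\int_0^{h\wedge\bar\tau}\Bigl(\theta - \tfrac{\sigma^2}{2}\phi(t-s,\sigma_R B_s)\Bigr)ds\right)\phi\bigl(t - h\wedge\bar\tau,\ \sigma_R B_{h\wedge\bar\tau}\bigr)\right),
\]
where $\bar\tau = \bar\tau^{\sigma_R B}_{x_0}$ and $B$ starts at $x/\sigma_R$.

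Next I would expand each factor to first order in $h$ on the event $\{\bar\tau > h\}$, whose complement contributes $o(h)$ because $\phi$ is bounded on the relevant region (by Lemma~\ref{tight-n-w}(i), equivalently the boundedness asserted in Proposition~\ref{prop-conv-sub} away from $x=0$) and $P_x(\bar\tau \le h) = o(h)$. The exponential factor equals $1 + h\bigl(\theta - \tfrac{\sigma^2}{2}\phi(t,x)\bigr) + o(h)$ using continuity of $\phi$ (Lipschitz in $x$, monotone hence a.e.-differentiable in $t$; to be safe one works with the established continuity and integrability and uses dominated convergence, deferring differentiability in $t$ to the end). The term $E_x\bigl(\phi(t-h, \sigma_R B_h)\bigr)$ is, by Itô's formula / the fact that $\sigma_R B_h$ has generator $\tfrac{\sigma_R^2}{2}\partial_{xx}$, equal to $\phi(t,x) + h\bigl(-\partial_t\phi(t,x) + \tfrac{\sigma_R^2}{2}\partial_{xx}\phi(t,x)\bigr) + o(h)$ — but this presumes enough smoothness of $\phi$, which is exactly what is not yet known. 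So the honest plan is: first establish that $\phi$ is a \emph{weak} (distributional, or mild) solution of \eqref{kpp}, and then invoke parabolic regularity to upgrade.

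Concretely, I would proceed in the mild-solution formulation. Write $p_s(x,y)$ for the transition density of $\sigma_R B$ killed on hitting $x_0$ (Dirichlet heat kernel on $(x_0,\infty)$ with diffusivity $\sigma_R^2$). Unfolding \eqref{eq:phi_FK} by conditioning at an intermediate time and using the strong Markov property, one obtains the Duhamel-type identity
\[
\phi(t,x) = E_x\bigl(\phi(t-t\wedge\bar\tau, \sigma_R B_{t\wedge\bar\tau})\bigr) + \int_0^t \!\!\int_{x_0}^\infty p_s(x,y)\Bigl(\theta\phi(t-s,y) - \tfrac{\sigma^2}{2}\phi^2(t-s,y)\Bigr)\,dy\,ds + (\text{boundary terms}),
\]
valid for all $x_0 \in (0,x)$; here I am using that $\exp(\int_0^r g\,ds) = 1 + \int_0^r g\,\exp(\int_0^s g\,ds)\,ds$ and re-expanding. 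Because this holds for every $x_0$, and the nonlinear inhomogeneity $\theta\phi - \tfrac{\sigma^2}{2}\phi^2$ is continuous (Proposition~\ref{prop-conv-sub}), standard parabolic theory (the inhomogeneous term is locally Hölder once we know $\phi$ is continuous, which it is) gives that $\phi$ restricted to $(0,T]\times(x_0,\infty)$ is a classical $C^{1,2}$ solution of \eqref{kpp} in the interior — this is the interior Schauder estimate applied to $u_t = \tfrac{\sigma_R^2}{2}u_{xx} + f$ with $f(t,x) := \theta\phi - \tfrac{\sigma^2}{2}\phi^2 \in C^\alpha_{loc}$. Since $x_0$ was arbitrary, \eqref{kpp} holds on all of $\{t>0, x>0\}$.

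The main obstacle I anticipate is precisely the regularity bootstrap: the limiting function $\phi$ is a priori only Lipschitz in $x$ and monotone in $t$, so one cannot plug it into Itô's formula or differentiate it naively, and the Feynman–Kac identity \eqref{eq:phi_FK} only encodes an integral (mild) relation. The way around it is to treat \eqref{eq:phi_FK} as asserting that $\phi$ is a mild solution of the semilinear heat equation with a \emph{continuous} and locally bounded nonlinearity, then invoke the classical smoothing property of the heat semigroup ($L^\infty \to C^{1,\alpha}$ in one step, then $C^\alpha \to C^{2,\alpha}$ by Schauder) to conclude $\phi \in C^{1,2}$ and hence a classical solution. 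A secondary, more bookkeeping-level obstacle is justifying the Duhamel unfolding and the passage from the $t\wedge\bar\tau$ truncation to a clean interior identity while controlling the boundary contributions near $x_0$; these are handled by sending the localization radius around $x_0$ to zero and using the boundedness of $\phi$ together with the exponential smallness of the hitting probabilities, exactly in the spirit of the estimates \eqref{eq:nw_bd_exit_prob}–\eqref{eq:I2} already used in Section~\ref{sec-conv}.
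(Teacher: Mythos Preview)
Your plan is essentially correct, but it is far more elaborate than what the paper does. The paper's entire proof is a one-line citation of Kac's theorem (specifically Theorem~4.1 in Section~3.4 of \cite{Pinsky95book}, or Section~2.6 of \cite{ItoMckean}), which takes the Feynman--Kac representation \eqref{eq:phi_FK} with a continuous potential $V(s,y)=\theta-\tfrac{\sigma^2}{2}\phi(t-s,y)$ and outputs the PDE directly, including the regularity of the solution. You are, in effect, sketching the proof of that theorem from scratch: your mild-solution/Duhamel reformulation followed by a Schauder bootstrap is exactly the standard mechanism by which Kac's theorem upgrades a continuous function satisfying the integral identity to a classical $C^{1,2}$ solution.

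What each approach buys: the paper's citation is economical and relies on the reader accepting the black-box; your approach is self-contained and makes transparent where the regularity comes from, which is pedagogically valuable but unnecessary here given the references. Two minor remarks on your outline: (i) the short-time identity with $h\wedge\bar\tau$ in place of $t\wedge\bar\tau$ does indeed follow from the discrete formula---it is Lemma~\ref{lemma-disc-f-c-upto}(i) with $m-k\approx nh$, passed to the limit exactly as in the proof of Proposition~\ref{prop-phi-con}; (ii) your ``(boundary terms)'' in the Duhamel expansion can be handled, but the cleaner route is to avoid unfolding altogether and directly recognise \eqref{eq:phi_FK} as the hypothesis of Kac's theorem on the domain $(x_0,\infty)$, then let $x_0\downarrow 0$.
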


\begin{proof}
This follows from Kac's theorem; see, for example, Section 2.6 in \cite{ItoMckean} or Theorem 4.1 in Section 3.4 of \cite{Pinsky95book}. 
\end{proof}

In the following lemma, we derive the initial and boundary conditions of~$\phi$ from Proposition \ref{prop-conv-sub}.
\begin{lemma}  \label{lemma-boundary}
\begin{itemize}
\item [(i)] For any $x>0$,
\bn
\lim_{t\dr 0} \phi(t,x) = 0.
\en
\item [(ii)] For any $t>0$,
\bn
\lim_{x\dr 0} \phi(t,x) = \infty.
\en
\item [(iii)]  For any $T>0$,
\bn
\lim_{x\rr \infty}   \phi(t,x) = 0, \quad \textrm{uniformly on } [0,T].
\en
\end{itemize}
\end{lemma}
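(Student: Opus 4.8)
The plan is to dispatch parts~(i) and~(iii) quickly from the estimates of Section~\ref{sec-conv} and the monotonicity recorded in Proposition~\ref{prop-conv-sub}, and to spend the real effort on part~(ii). For part~(iii), since $\phi(\cdot,x)$ is increasing in $t$ one has $\sup_{0<t\le T}\phi(t,x)\le\phi(T,x)=\lim_j n_{i_j}w^{\th/n_{i_j}}_{n_{i_j}T}(\sqrt{n_{i_j}}\,x)\le\sup_{n\ge1}n\,w^{\th/n}_{nT}(\sqrt n\,x)$, and the last quantity tends to $0$ as $x\rr\infty$ by Lemma~\ref{tight-n-w}(ii); together with $\phi(0+,x)=0$ (part~(i)) this gives the asserted uniform convergence on $[0,T]$. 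For part~(i), fix $x>0$, $t_1>0$ and $\delta\in(0,x/4)$, and rerun the estimate from the proof of Lemma~\ref{tight-n-w}(ii) keeping track of the $t$-dependence: bound $n\,w^{\th/n}_{nt}(\sqrt n x)$ via Lemma~\ref{lemma-disc-f-c-upto}(ii) with exit level $\sqrt n x/2$, drop the non-exit term because $w^{\th/n}_0\equiv0$ on $[1,\infty)$, and use the monotonicity of $w^{\th/n}_{nt}$ in the space variable together with the finite range of $\W$, to obtain, for $n$ large (so that $\sqrt n x/2-R\ge\delta\sqrt n$) and all $t\le t_1$,
\[
  n\,w^{\th/n}_{nt}(\sqrt n x)\ \le\ e^{\th^+t_1}\Big(\sup_{y\ge\delta}\sup_{m\ge1}m\,w^{\th/m}_{mt_1}(\sqrt m\,y)\Big)\,P_{\sqrt n x}\big(\bar\tau_{\sqrt n x/2}<nt\big)\ \le\ C\,e^{-\beta x^2/t},
\]
with $C,\beta>0$ depending only on $x$ and $t_1$, the two inequalities coming from Lemma~\ref{tight-n-w}(i) and Lemma~\ref{Lemma-uni2}(i). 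Letting $n\rr\infty$ and then $t\dr0$ gives $\phi(t,x)\le Ce^{-\beta x^2/t}\to0$, hence $\lim_{t\dr0}\phi(t,x)=0$.

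The substantive part is~(ii), which I would prove in the stronger quantitative form $\phi(t,x)\ge c(t)\,\sigma_R^2/x^2$ for all sufficiently small $x>0$; letting $x\dr0$ then yields the statement. Fix $t>0$; for $0<x<\sigma_R\sqrt t$ put $k_n:=\lceil n x^2/\sigma_R^2\rceil$, so that $k_n\le nt$ for large $n$ and $\sqrt n\,x/(\sigma_R\sqrt{k_n})\to1$. Writing $Z_n$ for the number of generation-$k_n$ particles located at position $\ge\sqrt n x$, the monotonicity of $M_k$ in $k$ and the Paley--Zygmund inequality give
\[
  n\,w^{\th/n}_{nt}(\sqrt n x)\ \ge\ n\,P^{\th/n}_1\big(M_{k_n}\ge\sqrt n x\big)\ \ge\ n\,P^{\th/n}_1(Z_n\ge1)\ \ge\ n\,\frac{\big(E^{\th/n}_1 Z_n\big)^2}{E^{\th/n}_1(Z_n^2)}.
\]
By the many-to-one formula $E^{\th/n}_1 Z_n=(1+\th/n)^{k_n}\,P(S_{k_n}\ge\sqrt n x)$, with $S_{k_n}$ the position at time $k_n$ of the random walk with step law $F_{RW}$, so by the central limit theorem and $k_n/n\to x^2/\sigma_R^2$ this converges to $e^{\th x^2/\sigma_R^2}\,P(\mathcal N(0,1)\ge1)$, which is bounded below by a positive constant uniformly for $x$ in a fixed neighbourhood of $0$. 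By the many-to-two formula, $E^{\th/n}_1(Z_n^2)\le E^{\th/n}_1 Z_n+\sum_{j=0}^{k_n-1}(1+\th/n)^j\,\wt\sigma^2(\th/n)\,(1+\th/n)^{2(k_n-j-1)}$; since $k_n\le nt$, the factors $(1+\th/n)^j$ and $(1+\th/n)^{2(k_n-j-1)}$ stay bounded and $\wt\sigma^2(\th/n)\to\sigma^2$, so the sum is at most $C\,k_n$ with $C$ independent of $n$ and of $x$ near $0$. Hence $P^{\th/n}_1(Z_n\ge1)\ge c/k_n$ for $n$ large, so $n\,w^{\th/n}_{nt}(\sqrt n x)\ge cn/k_n\to c\sigma_R^2/x^2$, and passing to the subsequential limit yields $\phi(t,x)\ge c\sigma_R^2/x^2$.

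I expect the bookkeeping in part~(ii) to be the main obstacle: one must set up the many-to-two identity for the near-critical branching law — bearing in mind that \eqref{assum-sig-gam} constrains only the variance and third moment of $F^{\th/n}_B$ — verify that the factors $(1+\th/n)^j$ remain bounded over $j\le k_n\le nt$, and check that the CLT limit of $E^{\th/n}_1 Z_n$ does not degenerate as $x\dr0$. A convenient feature of this route is that it treats $\th>0$ and $\th<0$ simultaneously; the easier alternative for $\th\ge0$, namely dominating $w^{\th/n}_{k_n}$ by the critical system and invoking the conditional limit theorem of \cite{LS15}, would still require a separate argument when $\th<0$.
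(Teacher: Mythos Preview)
Your treatment of parts (i) and (iii) matches the paper's: both follow from the exponential bound established in the proof of Lemma~\ref{tight-n-w}(ii) (specifically \eqref{ttr1}--\eqref{eq:nw_bd_exit_prob}) together with the monotonicity of $\phi$ in $t$.

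For part (ii) you take a genuinely different route. The paper lower-bounds $w^{\th/n}_{nx}(\sqrt{n}x)$ by following a single surviving lineage: conditional on the process surviving to generation $[nx]$, the trajectory of that lineage is an ordinary random walk, so $w^{\th/n}_{nx}(\sqrt{n}x)\ge P^{\th/n}_1(\textrm{survive to }[nx])\cdot P(W_{[nx]}\ge\sqrt n x)$; the random-walk factor stays bounded away from zero by Lemma~\ref{Lemma-uni2}(ii), while the survival factor is handled by invoking the weak convergence of the rescaled system to super-Brownian motion, which gives $n\,P^{\th/n}_1(\textrm{survive to }[nx])\to\infty$ as $x\dr0$. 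Your Paley--Zygmund/many-to-two argument is correct and more self-contained: it uses only elementary moment identities for Galton--Watson trees (the second-moment bound is just $E Z_n^2\le E|X_{k_n}|^2$), avoids any appeal to the measure-valued limit, treats all $\th\in\zz R$ at once, and even yields the quantitative lower bound $\phi(t,x)\ge c(t)/x^2$, matching the order of the $\th=0$ solution $\psi(x)=6\sigma_R^2/(\sigma^2 x^2)$. The paper's version is shorter once the SBM convergence is taken for granted; yours is elementary but requires the bookkeeping you flag, all of which goes through because $k_n\le nt$ keeps every factor $(1+\th/n)^{j}$ uniformly bounded.
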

\begin{proof}
(i) It is sufficient to show that for any $x>0$ and $\eps>0$, there exists $t_0>0$ such that
 \bq \label{tt1}
 \sup_{n\geq 1}nw_{nt}^{\th/n}(\sqrt{n}x) \leq \eps, \quad \textrm{for all } 0<t \leq t_0.
\eq
This follows from the bounds \eqref{ttr1} and \eqref{eq:nw_bd_exit_prob}.

(ii) We need to prove that for any $t>0$,
\begin{equation}\label{eq:nw_lim_x_0}
\lim_{x \dr 0} \lim_{n \rr \infty} nw_{nt}^{\th/n}(\sqrt{n}x) = \infty.
\end{equation}
Note that for any $x\leq t$ we have
\[
\aligned
 nw_{nt}^{\th/n}(\sqrt{n}x)
 &\geq    nw_{nx}^{\th/n}(\sqrt{n}x) \\
 &\geq   nP^{\th/n}(X \textrm{ survives  to generation } [ nx ] )\cdot P(W_{[ nx ]} \geq \sqrt{n}x)  \\
 &\geq   nP^{\th/n}(X \textrm{ survives  to generation } [ nx ] )(1-c_2e^{-c_3/x}),
\endaligned
\]
where we used Lemma \ref{Lemma-uni2}(ii) in the last inequality.
To bound the probability in the last term, note that by the weak convergence of $X$ to $\wt X$ we have
\[
\begin{aligned}
\liminf_n P^{\th/n}_n(X \textrm{ survives  to generation } [ nx ] )
&\geq P^{\th}_{\delta_0}(\wt X \textrm{ survives  to time } x ),
\end{aligned}
\]
which goes to 1 as $x\to 0$. Hence
\[
\liminf_{x\dr 0} \liminf_{n \rr \infty} nP^{\th/n}(X \textrm{ survives  to generation } [ nx ] )  = \infty.
\]
The conclusion follows.

(iii) Part (iii) follows from Lemma \ref{tight-n-w}(ii) and the monotonicity of $\phi$ in $t$.
\end{proof}

Next we prove uniqueness of positive solutions to (\ref{kpp}), with the initial and boundary conditions from Lemma \ref{lemma-boundary}. Note that this equation was presented in (\ref{kpp-uniq}).
 \begin{proposition} \label{prop-uniq}
There exists at most one positive solution $\phi$ to  equation~(\ref{kpp-uniq}).
\end{proposition}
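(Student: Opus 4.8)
The plan is to prove uniqueness by a comparison argument: if $\phi_1$ and $\phi_2$ are both positive solutions to \eqref{kpp-uniq}, I would show $\phi_1 \le \phi_2$ and $\phi_2 \le \phi_1$, hence $\phi_1 = \phi_2$. The main tool is a parabolic maximum principle, but the serious difficulty is the singular boundary condition $\phi(t,x)\to\infty$ as $x\downarrow 0$, which prevents a naive application of the maximum principle on the strip $\{t>0,\ x>0\}$. The standard way around this is to reduce to a bounded region by exploiting the barrier structure near $x=0$ and the decay $\phi(t,x)\to 0$ as $x\to\infty$.

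First I would fix $T>0$ and work on the rectangle $Q_{a,b,T} = (0,T]\times(a,b)$ for $0<a<b<\infty$. On $Q_{a,b,T}$ both $\phi_i$ are classical bounded solutions of the semilinear parabolic equation $\partial_t u = \tfrac{\sigma_R^2}{2}\partial_{xx}u + \theta u - \tfrac{\sigma^2}{2}u^2$, and the difference $v=\phi_1-\phi_2$ satisfies a linear parabolic equation $\partial_t v = \tfrac{\sigma_R^2}{2}\partial_{xx}v + c(t,x)v$ with $c(t,x) = \theta - \tfrac{\sigma^2}{2}(\phi_1+\phi_2)$ bounded on $Q_{a,b,T}$. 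The parabolic maximum principle then gives $\sup_{Q_{a,b,T}}|v| \le e^{\|c\|_\infty T}\sup_{\partial_p Q_{a,b,T}}|v|$ where $\partial_p$ denotes the parabolic boundary ($\{t=0\}\cup\{x=a\}\cup\{x=b\}$). On $\{t=0\}$ we have $v=0$ by the initial condition. Letting $b\to\infty$, the contribution of $\{x=b\}$ vanishes by the uniform-in-$t$ decay $\phi_i(t,x)\to0$. So everything reduces to controlling $v$ on the left edge $\{x=a\}$ as $a\downarrow 0$, and here we must use the \emph{rate} at which $\phi_i$ blow up, not just that they do.

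To handle the left edge I would show that any positive solution is squeezed between explicit sub/super-solutions near $x=0$, forcing $\phi_1(t,a)/\phi_2(t,a)\to 1$ (indeed $\phi_1(t,a)-\phi_2(t,a)$ to be small relative to the maximum-principle blow-up). The natural candidates are the stationary profiles: the function $\bar\psi(x) = 6\sigma_R^2/(\sigma^2 x^2)$, which solves $\tfrac{\sigma_R^2}{2}\bar\psi'' = \tfrac{\sigma^2}{2}\bar\psi^2$ and blows up like $x^{-2}$, can be perturbed to $\bar\psi_\epsilon(x) = 6\sigma_R^2/(\sigma^2(x-\epsilon)^2) + K_\epsilon$ (with suitable constants absorbing the $\theta u$ term) to obtain super-solutions valid on $\{x>\epsilon\}$; a matching family of sub-solutions of the same leading order is built the same way with the singularity pushed to $x=-\epsilon$. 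Comparison on $\{t>0,\ \epsilon<x<b\}$ (where the solution is now bounded away from the singularity) then pins down the asymptotics $\phi_i(t,x) \sim 6\sigma_R^2/(\sigma^2 x^2)$ as $x\downarrow 0$, uniformly on $[0,T]$; in particular $\phi_1(t,a)-\phi_2(t,a) = o(a^{-2})$. Plugging this into the maximum-principle bound on $Q_{a,b,T}$ — but now applied to $v$ on $\{a<x<b\}$ with the extra room that, for $x$ bounded away from $0$, $|v|$ is uniformly bounded — and then sending $a\downarrow 0$, $b\to\infty$, kills the boundary term and yields $v\equiv 0$ on $(0,T]\times(0,\infty)$ for every $T$, hence uniqueness.

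The main obstacle, as indicated, is the construction of the sub- and super-solutions near the singular boundary and the verification that they genuinely trap every positive solution — this requires the comparison principle in the slightly awkward region $\{x>\epsilon\}$ where one must check the ordering holds on the artificial boundary $\{x=\epsilon\}$ (where the super-solution is $+\infty$, so it is automatic) and at $t=0$ (where we need the sub-solution to start below $0$, arranged by taking it negative or by an additional time-dependent correction). Everything else — the linear maximum principle, the decay at $x=\infty$, the vanishing initial data — is routine. I should also note that the same scheme, with the bounded stationary solution $\psi$ of \eqref{psi-eq} playing the role of the comparison function, is exactly what is alluded to in the remark following \eqref{sing-pde} for uniqueness of the elliptic problem, so the argument here is the parabolic analogue.
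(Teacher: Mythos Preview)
Your linearization idea is sound up to the point where you try to close the argument at the singular edge $x=a$. The maximum principle you invoke gives
\[
\sup_{Q_{a,b,T}}|v|\le e^{\theta^+ T}\sup_{\partial_p Q_{a,b,T}}|v|,
\]
(the constant is independent of $a$ since $c=\theta-\tfrac{\sigma^2}{2}(\phi_1+\phi_2)\le\theta$), so to conclude $v\equiv 0$ you need $\sup_{t\in[0,T]}|v(t,a)|\to 0$ as $a\downarrow 0$. But your barrier construction yields at best $\phi_i(t,x)=\tfrac{6\sigma_R^2}{\sigma^2 x^2}+O(1)$ near $x=0$ (check: with $K\ge 2\theta^+/\sigma^2$ and any $\epsilon>0$, $\bar\psi_\epsilon(x)=\tfrac{6\sigma_R^2}{\sigma^2(x-\epsilon)^2}+K$ is a supersolution, and letting $\epsilon\downarrow0$ gives the upper bound; the matching lower bound is harder because of the initial condition, as you note). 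This only gives $|v(t,a)|=O(1)$, not $o(1)$, so the boundary term does \emph{not} vanish and the maximum principle yields only $|v|\le Ce^{\theta^+T}$, which is vacuous. Your parenthetical remark ``with the extra room that, for $x$ bounded away from $0$, $|v|$ is uniformly bounded'' is circular: boundedness of $v$ at interior points is precisely what you are trying to establish, and in any case boundedness is not zero. One could try to exploit the strong negativity $c(t,x)\sim -6\sigma_R^2/x^2$ near $x=0$ via a weighted maximum principle or a Bessel-type Feynman--Kac estimate, but that is substantial extra work you have not outlined.

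The paper sidesteps this difficulty by a scaling trick rather than barriers. After the substitution $u(t,x)=e^{-\theta t}\phi(t,x-1)$ on $\{x>1\}$, given two solutions $u_1,u_2$ with $u_1(t_0,x_0)<u_2(t_0,x_0)$, one chooses $c>1$ close to $1$ and sets $v_2(t,x)=c^2u_2(c^2t,cx)$. The point is that $v_2$ is \emph{finite} on $\{x\ge 1\}$ because $cx\ge c>1$ stays away from the singularity of $u_2$; meanwhile $v_2$ still satisfies a PDE of the same type. Then $f=u_1-v_2$ tends to $+\infty$ at $x=1$, to $0$ at $x=\infty$ and at $t=0$, yet is strictly negative at $(t_0,x_0)$, so its global minimum is attained at an interior point $(t^*,x^*)$. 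The second-derivative test there, combined with the PDEs for $u_1$ and $v_2$, yields a sign contradiction (directly when $\theta\ge0$; via $c\downarrow 1$ when $\theta<0$). The scaling replaces the hard analytic question of matching singular asymptotics with a soft topological one about where a minimum can occur.
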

\begin{proof}
Without loss of generality,  we set $\sigma_R= \sigma=1$ in (\ref{kpp-uniq}).
Define
$$
u(t,x) =e^{- \th t}  {\phi}(t,x -1), \q x >1, t> 0,
$$
which satisfies
\be \label{u-uniq}
\left\{
\begin{aligned}
\frac{\partial u}{\partial t}(t,x) &=\frac{1}{2} \frac{\partial^{2}u}{\partial x^{2} }(t,x) - {\frac{1}{2}} e^{\th t} u^{2}(t,x), \quad t>0, \ x>1, \\
 \lim_{t \dr 0} u (t,x) &= 0, \quad \mbox{for all } x>1, \\
\lim_{x \dr 1} u(t,x) &= \infty, \quad \mbox{for all }  t>0, \\
\lim_{x\rr \infty} \sup_{0\leq t\leq T} u(t,x) &= 0,\q {\mbox{    for all } T>0.}
\end{aligned}
\right.
\ee
Suppose that (\ref{u-uniq}) has two positive solutions $u_1$ and $u_2$ such that $u_1(t_0,x_0)\neq u_2(t_0,x_0)$ for some $x_0>1$ and $t_0\in (0,T)$. Without loss of generality, suppose $u_1(t_0,x_0)< u_2(t_0,x_0)$. Then by continuity, for $c>1$ that is {close enough to 1} we have
\begin{equation}\label{eq:u1u2}
u_1(t_0,x_0)< u_2(c^2 t_0, c x_0).
\end{equation}

Define
$$
v_2(t,x)=v_2(t,x;c) = c^2u_2(c^2 t,c x).
$$
Then $v_2$ satisfies
\be \label{v-2-uniq}
\left\{
\begin{aligned}
\frac{\partial v_2}{\partial t}(t,x) &=\frac{1}{2} \frac{\partial^{2}v_2}{\partial x^{2} }(t,x) - {\frac{1}{2}}e^{c^{2} \th t} v_2^{2}(t,x), \quad t>0, \ x>1, \\
 \lim_{t \dr 0} v_2 (t,x) &= 0, \quad \mbox{ for all }x>1, \\
 \lim_{x \dr 1} v_2(t,x) &<\infty, \mbox{ for all }t>0,\\
\lim_{x\rr \infty}  \sup_{t\in [0,T]} v_2(t,x) &= 0, {\mbox{    for all } T>0.}
\end{aligned}
\right.
\ee

Let
$$
f(t,x) =f(t,x;c) = u_1(t,x) - v_2(t,x).
$$
We have
\be \label{f-lim}
\lim_{x\dr 1} f(t,x) = \infty, \quad \textrm{for all } t>0.
\ee
By \eqref{eq:u1u2}, $\delta=\delta(c):= - \inf_{t,x} f(t,x;c) > 0.$
By (\ref{u-uniq}) and (\ref{v-2-uniq}), there exists~$M~>~0$ such that
$$
\sup_{x>M, \, 0< t\leq T} (u_1(t,x) +v_2(t,x)) < \delta/4,
$$
and therefore
\be \label{sup-f-M}
\inf_{x \geq M, \, 0< t\leq T} f(t,x)  > - \delta/2.
\ee
It follows that the infimum of $f(\cdot) = f(\cdot;c)$ must be attained at some point $(t^*,x^*)=((t^*(c),x^*(c)))\in (1,M) \times (0,T]$, and we have
\begin{equation}\label{eq:fxxft}
 \q \frac{\partial^{2} f}{\partial x^{2} }(t^*,x^*)\geq 0,\mbox{ and }\frac{\partial f}{\partial t} (t^*,x^*)\leq 0.
\end{equation}
However, by (\ref{u-uniq}) and (\ref{v-2-uniq}) we have
\begin{equation}\label{eq:fxx_ft_diff}
\frac{1}{2} \frac{\partial^{2}f}{\partial x^{2} }(t^*,x^*) - \frac{\partial f}{\partial t} (t^*,x^*)  = {\frac{1}{2}} e^{\th t^*}u_1^2(t^*,x^*)-{\frac{1}{2}}e^{ c^2\th t^*}v_2^2(t^*,x^*).
\end{equation}
When $\th\geq 0,$ the last term is negative due to that $0< u_1(t^*,x^*) < v_2(t^*,x^*)$. This contradicts~\eqref{eq:fxxft}, and we conclude the proof.

Consider now the case when $\th<0$. Take any sequence $(c_n)$ such that~\eqref{eq:u1u2} holds for all $c_n$ and $c_n\downarrow 1$. Note that by continuity we can choose $M$ independent of $c_n$ such that for all $n$ large enough,
$$
\sup_{x>M, \, 0< t\leq T} (u_1(t,x) +v_2(t,x;c_n)) < (u_2(t_0,x_0)- u_1(t_0,x_0))/4.
$$
Consider the point sequence $(t^*(c_n),x^*(c_n))\subseteq (1,M) \times (0,T]$, and suppose that $(t^o,x^o)$ is a limiting point. Note that
\[
\liminf_{n\rr \infty} \delta(c_n) \geq u_2(t_0, x_0) - u_1(t_0,x_0)>0.
\]
It follows that $(t^o,x^o)$ must be also inside $(1,M) \times (0,T]$, and by \eqref{eq:fxx_ft_diff}, when~$n$ is large enough,
\[
\frac{1}{2} \frac{\partial^{2}f}{\partial x^{2} }(t^*(c_n),x^*(c_n);c_n) - \frac{\partial f}{\partial t} (t^*(c_n),x^*(c_n);c_n) < 0,
\]
and we again get contradiction with \eqref{eq:fxxft}.
 \end{proof}

We are now ready to prove Theorem \ref{theorem-weak-con}.

\begin{proof}[Proof of Theorem \ref{theorem-weak-con}]
By Proposition \ref{prop-conv-sub}, Corollary \ref{corollary-pde}, Lemma \ref{lemma-boundary} and Proposition \ref{prop-uniq}, $(nw^{\theta/n}_{n t}(\sqrt{n}x)_{t>0,x>0}$ converges to $(\phi(t,x))_{t>0,x>0}$, which is the unique positive solution to (\ref{kpp-uniq}). Recall that $w^{\theta/n}_{\cdot}(\cdot)$ is for the case with a single initial particle, while $u^{\theta/n}_{\cdot}(\cdot)$ is for the case with $n$ initial particles. We have
\bd
u^{{\theta/n}}_{nt}(\sqrt{n}x) =1- \big(1-w^{{\theta/n}}_{nt}(\sqrt{n}x)\big)^n, \quad  \textrm{for all } x>0, \  t>0.
\ed
Therefore,
\be \label{u-phi}
\lim_{n \rr \infty} u^{{\theta/n}}_{nt}(\sqrt{n}x) =  1-e^{-\phi(t,x)}, \quad  \textrm{for all } x>0, \  t>0.
\ee

To finish the proof of Theorem \ref{theorem-weak-con}, in the below we analyze the exit probability of the limiting super-Brownian motion $\wt X$ with drift {$\theta$}, diffusion coefficient~$\sigma_R^{2}$ and branching coefficient $\sigma^2$.

For any $r>0$, choose a sequence of functions $\{\psi_{r,m}(x)\}_{r,m =1}^{\infty} \in C^{\infty}(\zz{R})$ satisfying the following:
\bn
\psi^{}_{r,m}(x)=
\left\{
\begin{array}{ll}
0,  \  \textrm{ if } -\infty  <x \leq r, \\\\
m,  \  \textrm{ if } r+\frac{1}{m} \leq x \leq m+r, \\\\
0,  \  \textrm{ if } x \geq m+r+1,
\end{array}
\right.
\en
and
\bn
0\leq \psi_{r,m}\leq m.
\en
Let $v_{r,m}$ be the solution to
\be \label{v-rm}
\frac{\partial v_{r,m}}{\partial t}(t,x) = \frac{\sigma_R^2}{2} \frac{\partial^{2}v_{r,m}}{\partial x^{2} }(t,x)+\theta v_{m,r}(t,x) -\frac{\sigma^2}{2} v_{r,m}^{2}(t,x)+ \psi_{r,m}(x), \ t>0, \ x\in \re,
\ee
with the initial condition $v_{r,m}(0,x) \equiv 0$.
By the same argument as for the convergence~(5) in \cite{pinsky95}, $v_{r,m}$ is increasing in $m$, and we can define
\be \label{v-r-lim}
v_r(t,x) = \lim_{m \rr \infty} v_{r,m}(t,x), \quad \textrm{for every } x\in\zz{R}, \, t>0.
\ee
Moreover, by repeating the argument for equation (6) in \cite{pinsky95}, with $\dl_{x}$ as the initial measure, we have
\be \label{v-r}
P^\theta_{\dl_x} \big( \wt L_{t}([r,\infty)) = 0  \big)  =e^{-  v_r(t,x)}.
\ee

We now analyze $v_r(t,x)$. By (\ref{v-rm}), (\ref{v-r-lim}) and the monotone convergence theorem and using further \eqref{v-r}, we get that that $v_r$ is a weak solution to
\be \label{v-r-eq}
\left\{
\aligned
\frac{\partial  v_r}{\partial t}(t,x) &= \frac{\sigma^2_R}{2} \frac{\partial^{2} v_r}{\partial x^{2} }(t,x)+\theta  v_r (t,x) -\frac{\sigma^2}{2} v_r^{2}(t,x),\\
&  \quad \mbox{for  } { t>0, \  -\infty < x <r, }\\
 v_r (0,x) &= 0, \quad -\infty <x <r, \\
\lim_{x \uparrow  r}  v_r(t,x) &= \infty, \quad t>0,\\
\lim_{x\rr -\infty}  \sup_{t\in [0,T]} v_r(t,x) &= 0, \mbox{    for all } T>0.
\endaligned
\right.
\ee
We want to strengthen the conclusion to be  that $v_r$ is a classical solution to~\eqref{v-r-eq}. To see this,
recall that $u_{r}$ is the minimal positive solution to~(\ref{sing-pde}). By Theorem A and Proposition A in~\cite{pinsky95},  we have
\be \label{exp-u}
P^\theta_{\dl_{x}} \big(\wt L_{t}(B_{r}(0)^{c}) =0 \big) = e^{-u_{r}{(t,x)}},
 \ee
and  $u_{r}$ satisfies that for every $\eps>0$, there exists $c_{\eps}>0$ and $M_{\eps}>0$ such that for all $ r>M_{\eps}$,  for all $t\geq 0 $ and $ x\in [0,r),$
\be \label{up-bound-u}
\begin{aligned}
u_{r}({t,x}) \leq \frac{1}{\sigma^2}\Big(\theta^++\frac{12\sigma_R^{2}r^2}{(r^2 - x^2)^2}\Big)  \exp\Big( -\Big(\frac{(r-x)^{2}}{2\sigma_R^{2}(1+\eps)t } &-  \theta^+ t -c_{\eps}\Big)_{+}\Big).
\end{aligned}
 \ee
Noting that $v_r(t,x) \leq u_r(t,x)$, using the bound (\ref{up-bound-u}) and regularity of weak solutions to parabolic PDE (see  Chapter 7.1.3 of \cite{Evans-book}), we see that $v_r$ is a positive classical solution to {(\ref{v-r-eq})}.

It follows that $\eta(t,x):=v_r(t,r-x), \ t>0, x>0$ solves \eqref{kpp-uniq}. However,  by Proposition \ref{prop-uniq}, the positive solution to (\ref{kpp-uniq}) is unique,  therefore
\be \label{phi-v}
\phi(t,x) = \eta(t,x)= v_r(t,r-x), \quad \textrm{for all } t>0, \ x>0.
\ee
Taking $r=x$ and using \eqref{v-r} we get the desired conclusion.
\end{proof}

Next, we prove Corollary \ref{corol-u-x}.
\begin{proof}[Proof of Corollary \ref{corol-u-x}]
First note that the convergence in (\ref{u-th-asymp}) has been derived in the proof of Theorem \ref{theorem-weak-con}.

Next we prove Part (i).
By Theorem \ref{theorem-weak-con}
and (\ref{exp-u}) we have
\[
\phi(t,x)\leq u_x(t,0).
\]
The bound in (i) then follows from (\ref{up-bound-u}).

To prove Part (ii), note that $w(t,x):= 2\th/\sigma^2 \cdot f_{\rho}((\rho \th t - \sqrt{\th} x/\sigma_R )_{+})$ satisfies
\bd
\left\{
\begin{aligned}
\frac{\partial w}{\partial t}(t,x) =&\frac{\sigma_R^2}{2} \frac{\partial^{2}w}{\partial x^{2} }(t,x)+ \th w(t,x) -  \frac{\sigma^2}{2}w^{2}(t,x), \quad t>0, \ x>0, \\
 \lim_{t \dr 0}w (t,x) =& 0, \\
\lim_{x\rr \infty}   w(t,x) =& 0 \quad \textrm{uniformly on } [0,T] \mbox{ for any } T>0.
\end{aligned}
\right.
\ed
Note that $\lim_{x \dr 0} w(t,x) <\infty$ for any $t>0$.

We want to show that  $w(t,x) \leq \phi(t,x)$ for all $x>0$ and $t\geq 0$. To do so, without loss of generality,  we set $\sigma_R= \sigma=1$ and define
$$
h(t,x) =e^{- \th t}  ({\phi}-w)(t,x-1), \q x >1, t> 0.
$$
The function $h(\cdot,\cdot)$ satisfies
\bd
\left\{
\begin{aligned}
\frac{\partial h}{\partial t}(t,x) =&\frac{1}{2} \frac{\partial^{2}h}{\partial x^{2} }(t,x)  - {\frac{1}{2}} e^{\th t}h(t,x)\cdot (\phi+w)(t,x), \quad t>0, \ x>1, \\
 \lim_{t \dr 0}h (t,x) =& 0, \mbox{ for all }  x>1, \\
\lim_{x \dr 1} h(t,x) =& \infty, \mbox{ for all } t>0,\\
\lim_{x\rr \infty}   h(t,x) =& 0 \quad \textrm{uniformly on } [0,T] \mbox{ for any } T>0.
\end{aligned}
\right.
\ed
Note that $(\phi+w)\geq 0$. By the same argument as in the proof of Proposition~\ref{prop-uniq}, we  get that $h(t,x)\geq 0$.

The conclusion follows.
\end{proof}

\section{{Proof} of Theorem \ref{theorem-speed}}\label{sec:travel_speed}

We first prove Part (i).

\begin{proof}[Proof of Theorem \ref{theorem-speed} (i)]
Recall that $\{L_k^{x}\}_{k \geq 0}$ stands for  the local time process of $X$.
By Corollary  A.2.7 in \cite{Lawler2010}, there exists $C>0$ such that for all $v>0$ and~$k\in\zz{N}$,
\begin{equation} \label{c52}
\begin{aligned}
&\q E_n^{\theta/n}\Big(  L_{n(k+1)}(B^{c}_{\sqrt{n}vk})\Big)\\
&= n \sum_{i\leq n(k+1)}\Big(1+\frac{\th}{n}\Big)^{i}P\big(|W_{i}|>\sqrt{n}vk\big) \\
&\leq   n\sum_{i\leq n(k+1)}\Big(1+\frac{\theta}{n}\Big)^{i} \cdot P\big(\max_{0 \leq i\leq n(k+1)} |W_{i}|>\sqrt{n}vk\big) \\
&\leq  n^2(k+1) \exp(\theta (k+1))\cdot \exp\left(-\frac{v^{2}k^2}{2\sigma^{2}_R(k+1)} + C\frac{v^{3}k^3}{\sigma_{R}^{3}(k+1)^2\sqrt{n}}\right)  \\
&= n^2(k+1)\exp\left((k+1)\Big(\th-\frac{v^{2}k^2}{2\sigma^{2}_R(k+1)^2} + \frac{Cv^{3} k^3}{\sigma_{R}^{3}(k+1)^3\sqrt{n}}\Big)\right).
\end{aligned}
\end{equation}
It follows that for every $v>\sqrt{2\th\sigma_{R}^{2}}$, there exists $N_{0}=N_{0}(v,\sigma_{R},\th)>0$ such that for all $ n>N_{0},$
\[
\sum_{k\in\zz{N} }E_n^{\theta/n}\Big(  L_{n(k+1)}(B^{c}_{\sqrt{n}vk})\Big) <\infty.
\]
Because $(L_\cdot(\cdot))$ is an integer-valued process,  by  the Borel-Cantelli lemma, we get that
\[
P_n^{\th/n}( L_{n(k+1)}(B^{c}_{\sqrt{n}vk}) = 0 \mbox{ for all } k \mbox{ large enough}) = 1.
\]
Note that for all $t>0$,
\[
  L_{nt}(B^{c}_{\sqrt{n}vt}) \leq L_{n([t]+1)}(B^{c}_{\sqrt{n}v[t]}).
\]
The conclusion  follows.
\end{proof}

Next we prove Part (ii).  The proof uses ideas from the proof of Theorem~2.1 in \cite{Kyprianou05}. Recall that $X_k(x)$ is the number of particles at site $x$ at generation $k$. For any $\gamma\in \re$, define
\be \label{m-beta}
m(\gamma)=m^{(n)}(\gamma) = \Big(1+\frac{\theta}{n}\Big)\sum_{z\in \ze}a_{z}e^{-\gamma z},
\ee
and
$$
W^{\gamma}_{k}= m^{-k}(\gamma) \sum_{z\in \ze}e^{-\gamma z}X_{k}(z), \quad k=0,1,...
$$
Then $\{W^{\gamma}_{k}\}_{k\geq 0}$ is a martingale; see Chapter {VI.4} of \cite{Athreya-1972} or Theorem 1 in \cite{Kingman}. Because $\{W^{\gamma}_{k}\}_{k\geq0}$ is nonnegative, the limit  $W^{\gamma} := \lim_{k\rr\infty }W_{k}^{\beta}$ almost surely exists, and by Fatou's lemma,
 $E_1^{\theta/n}(W^{\gamma})\leq 1$. The following lemma characterizes when $E_1^{\theta/n}(W^{\gamma})~=~1$ and provides the key ingredient in proving Part (ii) of Theorem~\ref{theorem-speed}.
\begin{lemma} \label{lem-exp-w} If $|\beta| < \sqrt{{2\theta}/{\sigma^2_R}}$, then for all sufficiently large $n$,
$$
E_1^{\theta/n}(W^{\beta/\sqrt{n}}) = 1.
$$
\end{lemma}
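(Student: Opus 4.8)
The plan is to invoke the Biggins martingale convergence theorem for branching random walks, in its spine/size--biased formulation, for the additive martingale $W^{\gamma}$ with $\gamma:=\beta/\sqrt n$. Under $P^{\theta/n}_1$ the first generation consists of $N$ particles all located at $Y$, where $Y\sim F_{RW}$ has mean $0$ and variance $\sigma_R^2$ and $N\sim F^{\theta/n}_B$ is independent of $Y$; hence $E^{\theta/n}_1\bigl[\sum_z e^{-\gamma z}X_1(z)\bigr]=E^{\theta/n}_1[N]\sum_z a_z e^{-\gamma z}$, which is precisely $m(\gamma)=m^{(n)}(\gamma)$ from \eqref{m-beta}, and it is finite for every $\gamma$ because $F_{RW}$ has finite range $R$. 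The theorem then gives $E^{\theta/n}_1(W^{\gamma})=1$ (equivalently $W^{\gamma}_k\to W^{\gamma}$ in $L^1$) as soon as two conditions hold: an $L\log L$ integrability condition $E^{\theta/n}_1\bigl[\widetilde W_1\log^{+}\widetilde W_1\bigr]<\infty$, where $\widetilde W_1:=m(\gamma)^{-1}\sum_z e^{-\gamma z}X_1(z)$ (which under $P^{\theta/n}_1$ equals $m(\gamma)^{-1}N e^{-\gamma Y}$); and the strict ``interior'' inequality $\log m(\gamma)>\gamma\,m'(\gamma)/m(\gamma)$.

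The $L\log L$ condition is straightforward and holds for \emph{every} $n$: since $|Y|\le R$ we have $\widetilde W_1\le m(\gamma)^{-1}e^{|\gamma|R}N$, hence $E^{\theta/n}_1[\widetilde W_1\log^{+}\widetilde W_1]\le C\bigl(1+E^{\theta/n}_1[N\log^{+}N]\bigr)\le C\bigl(1+E^{\theta/n}_1[N^2]\bigr)<\infty$, because $E^{\theta/n}_1[N^2]=\sigma^2(\theta/n)+(1+\tfrac{\theta}{n})^2$ is bounded by \eqref{assum-sig-gam}.

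The substance is the interior inequality. Note $m^{(n)}(\gamma)=(1+\tfrac{\theta}{n})\,\phi(\gamma)$ with $\phi(\gamma):=\sum_z a_z e^{-\gamma z}$ \emph{not} depending on $n$, so $\log m^{(n)}(\gamma)=\log(1+\tfrac{\theta}{n})+\log\phi(\gamma)$ and $(\log m^{(n)})'(\gamma)=(\log\phi)'(\gamma)$. Using $\sum_z z a_z=0$, $\sum_z z^2 a_z=\sigma_R^2$ and the finiteness of all moments of $F_{RW}$, a Taylor expansion gives $\phi(\gamma)=1+\tfrac12\sigma_R^2\gamma^2+O(\gamma^3)$ and $\phi'(\gamma)=\sigma_R^2\gamma+O(\gamma^2)$, so that at $\gamma=\beta/\sqrt n$,
\[
\log m^{(n)}\!\Bigl(\tfrac{\beta}{\sqrt n}\Bigr)-\tfrac{\beta}{\sqrt n}\,\bigl(\log m^{(n)}\bigr)'\!\Bigl(\tfrac{\beta}{\sqrt n}\Bigr)=\frac1n\Bigl(\theta-\tfrac12\sigma_R^2\beta^2\Bigr)+O\bigl(n^{-3/2}\bigr).
\]
Since $|\beta|<\sqrt{2\theta/\sigma_R^2}$ forces $\theta-\tfrac12\sigma_R^2\beta^2>0$, the left--hand side is strictly positive for all sufficiently large $n$; both conditions then hold and the theorem yields $E^{\theta/n}_1(W^{\beta/\sqrt n})=1$ for all such $n$. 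This threshold is exactly the near--critical version of the classical ``interior'' region: $\gamma=\beta/\sqrt n$ stays below the critical exponent (where the secant and tangent slopes of $\log m^{(n)}$ agree) precisely when $\tfrac12\sigma_R^2\beta^2<\theta$ to leading order.

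The main obstacle is the careful invocation of the general theorem for this ``jump--then--reproduce'' model --- identifying $m^{(n)}$ as the right normaliser, checking finiteness of the Laplace transform in a neighbourhood of $\gamma$, and matching the two hypotheses to the $L^1$--convergence criterion. A plain second--moment bound will \emph{not} cover the whole range: in the limit the offspring law need not concentrate at $1$ (indeed $\sigma^2(\theta/n)\to\sigma^2>0$), so for large $n$ the martingale $W^{\gamma}$ is not bounded in any $L^p$ with $p>1$, and the $L\log L$/spine route is genuinely needed. If one prefers a self--contained proof, an alternative is Biggins' barrier method: restrict $W^{\gamma}_k$ to particles whose ancestral path stays above a line of small negative slope $-\eps$, show the truncated martingale is $L^2$--bounded for every $|\beta|<\sqrt{2\theta/\sigma_R^2}$ (with a bound that blows up as $\eps\downarrow 0$), and let $\eps\downarrow 0$; the computation above is precisely what makes the truncated $L^2$ estimate work up to the stated threshold.
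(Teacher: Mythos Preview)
Your proposal is correct and follows essentially the same route as the paper: both invoke Biggins' martingale convergence theorem and reduce the $L^1$ convergence of $W^{\beta/\sqrt n}$ to the interior inequality, then verify it by the same Taylor expansion yielding the leading term $(\theta-\tfrac12\sigma_R^2\beta^2)/n$. The only cosmetic differences are that you work with the logarithmic form $\log m(\gamma)-\gamma\,(\log m)'(\gamma)>0$ while the paper verifies the exponentiated version $m(\gamma)\exp\bigl(-\gamma\,m'(\gamma)/m(\gamma)\bigr)>1$, and you spell out the $L\log L$ check explicitly whereas the paper absorbs it into ``from our assumptions on the step distribution and branching law.''
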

\begin{proof}
The proof is based on  Lemma 5 in \cite{Biggins77} {(see also Theorem 3.2 in \cite{Shi-SF}).}
From our assumptions on the step distribution and branching law, by Lemma~5 in \cite{Biggins77}, we need to verify that if $|\beta| < \sqrt{{2\theta}/{\sigma^2_R}}$, then for all sufficiently large $n$,
\be \label{mb1}
m(\beta/\sqrt{n}) \exp\Big(-\frac{\beta}{\sqrt{n}} \frac{m'(\beta/\sqrt{n})}{m(\beta/\sqrt{n})}\Big) >1.
\ee
Note that
\bd
-\frac{m'(\beta/\sqrt{n})}{m(\beta/\sqrt{n})} = \frac{(\beta/\sqrt{n}) \sum_{z\in \ze}a_z z\,e^{-(\beta/\sqrt{n}) z} }{\sum_{z\in \ze}a_z e^{-(\beta/\sqrt{n})z}}.
\ed
By the Taylor expansion and using the fact that $\sum_{z \in \ze} z a_z =0$ we get
$$
 \sum_{z\in \ze}a_z z\,e^{-(\beta/\sqrt{n}) z} = -\sigma^2_R\frac{\beta}{\sqrt{n}} +O(n^{-1}),
$$
and
\be\label{mb1.5}
 \sum_{z\in \ze}a_ze^{-(\beta/\sqrt{n}) z} = 1+\frac{\sigma^2_R}{2}\frac{\beta^2}{n} +O(n^{-3/2}).
\ee
It follows that
\be \label{mb2}
\exp\Big(-\frac{\beta}{\sqrt{n}} \frac{m'(\beta/\sqrt{n})}{m(\beta/\sqrt{n})}\Big) =1 - \sigma^2_R \frac{\beta^2}{n} +O(n^{-3/2}).
\ee
From (\ref{m-beta}), (\ref{mb1.5}) and (\ref{mb2}) we get
\bn
&& m(\beta/\sqrt{n}) \exp\Big(-\frac{\beta}{\sqrt{n}} \frac{m'(\beta/\sqrt{n})}{m(\beta/\sqrt{n})}\Big)\\
& =& \Big(1+\frac{\theta}{n}\Big)\Big(1+\frac{\sigma^2_R}{2}\frac{\beta^2}{n}\Big) \Big(1 - \sigma^2_R \frac{\beta^2}{n}  \Big) +O(n^{-3/2})\\
&=& 1 + \Big(\th - \frac{\sigma_R^2\beta^2}{2}\Big)\frac{1}{n} +O(n^{-3/2}),
\en
and we verify (\ref{mb1}).
\end{proof}

We are now ready to prove Theorem~\ref{theorem-speed} (ii).

\begin{proof}[Proof of Theorem \ref{theorem-speed} (ii)]
Because the branching random walk is homogeneous, the probability that the martingale limit $W^\gamma$ is positive is \emph{independent} of the start position. Hence, by standard Galton-Watson arguments, similar to the proof of {Lemma 2.2 in ~\cite{Shi-SF}}, for any $\gamma\in\zz{R}$, $P_1^{\theta/n}(W^{\gamma} > 0) $ is either $0$ or equal to the survival probability of $X^{}$. Therefore by Lemma \ref{lem-exp-w}, if $|\beta| < \sqrt{2\theta/\sigma^2_R}$, then for all sufficiently large~$n$,
\be \label{w-sur}
P_1^{\theta/n}(W^{-\beta/\sqrt{n}} >0) = P_1^{\theta/n}(X \textrm{ survives}).
\ee
Define
$$
\wt W_{t}^{-\beta/\sqrt{n}} = W^{-\beta/\sqrt{n}}_{[ nt ]} = (m(-\beta/\sqrt{n}))^{-[ nt ]} \sum_{z\in \ze}e^{(\beta/\sqrt{n}) z}X_{nt}(z), \quad t \geq 0,
$$
and
\begin{equation}\label{eq:tW_W}
\wt W^{-\beta/\sqrt{n}}:=\lim_{t\to\infty} \wt W_{t}^{-\beta/\sqrt{n}} =  W^{-\beta/\sqrt{n}}.
\end{equation}
Now pick any $0<\beta < \sqrt{2\theta/\sigma^2_R}$, and let $\eps\in (0,\beta)$ be an arbitrarily small number. Note that
$$
e^{(\beta/\sqrt{n}) z}  \mathds{1}_{\{z\leq \sqrt{n}(\beta-\eps)\sigma_R^2\,t\}} \leq e^{(\beta-\eps)z/\sqrt{n}}e^{\eps(\beta-\eps)\sigma_R^2\,t}.
$$
It follows from the Taylor expansion  in the proof of Lemma \ref{lem-exp-w} that for all $n$ large enough,
\bn
&&(m(-\beta/\sqrt{n}))^{-[nt]} \sum_{z\in \ze}e^{(\beta/\sqrt{n}) z}\mathds{1}_{\{z\leq \sqrt{n}(\beta-\eps)\sigma_R^2\,t\}}X_{nt}(z) \\
&=& e^{-(\theta +\frac{1}{2}\beta^2\sigma_R^2+O(n^{-1/2}))\,t} \sum_{z\in \ze}e^{(\beta/\sqrt{n})z} \mathds{1}_{\{z\leq \sqrt{n}(\beta-\eps)\sigma_R^2\,t\}}X_{nt}(z) \\
&\leq&  \wt W_t^{-(\beta-\eps)/\sqrt{n}} e^{-(\frac{1}{2}\eps^2\sigma_R^{2}+O(n^{-1/2}))\,t}.
\en
Because $\wt W_t^{-(\beta-\eps)/\sqrt{n}}$ converges almost surely, the last term converges to 0 as $t\to\infty$, and
we obtain that
\bn
\lim_{t\rr \infty}
m(-\beta/\sqrt{n}))^{-nt} \sum_{z\in \ze} e^{(\beta/\sqrt{n}) z}\mathds{1}_{\{z> \sqrt{n}(\beta-\eps)\sigma_R^2t\}}X_{nt}(z)= \wt W^{-\beta/\sqrt{n}}.
\en
It follows from \eqref{eq:tW_W} and \eqref{w-sur} that
$$
P_1^{\theta/n}(X_{nt}( (\sqrt{n}(\beta-\eps)\sigma_R^2\, t,\infty) )> 0  \mbox{ for all } t \mbox{ large enough} \, |\, X \textrm{ survives}) = 1.
$$
Because $\beta$ can be arbitrarily close to $\sqrt{2\theta/\sigma^2_R}$ and $\eps$ can be arbitrarily small, we get the desired conclusion.
\end{proof}

\section{Proof of Theorem \ref{theorem-u-inf}}\label{sec:Minfty}

In this section, we study {$M$}, the maximum displacement throughout the whole process. Denote
$$
w_\infty^{\th/n}(x)=P_1^{\th/n}({M} \geq x), \quad x \geq 0.
$$
In the following lemma, we show that for any $\theta \in \re$ {and $x_0>0$}, the function sequence $(n w_\infty^{\th/n}(\sqrt{n} x))_{x\geq x_0}$  uniformly converges as $n\rr \infty.$
Recall that the convergence of $nw^{\theta/n}_{n t}(\sqrt{n}x)$ to $\phi(t,x)$  was established in the proof of Theorem~\ref{theorem-weak-con}.


\begin{lemma}\label{lemma:conv_Minfty}
For any $\theta\in\zz{R}$ and $x_0>0,$ $\big(n w_\infty^{\th/n}(\sqrt{n} x)\big)_{x\geq x_0}$  uniformly converges to $\big(\psi(x):=\lim_{t\to\infty} \phi(t,x)\big)_{x\geq x_0}.$
\end{lemma}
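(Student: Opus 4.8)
The lower bound is free, so the plan is to put all the effort into a matching upper bound. Since $M_k\uparrow M$, monotone convergence gives $n w_\infty^{\th/n}(\sqrt n x)=\lim_{k\rr\infty} n w_k^{\th/n}(\sqrt n x)$, which is finite and bounded over $n\geq1,\ x\geq x_0$ by (the proof of) Lemma~\ref{tight-n-w}(i). Also $\phi(t,x)$ is nondecreasing in $t$ (Proposition~\ref{prop-conv-sub}) and bounded (Lemma~\ref{tight-n-w}(i)), so $\psi(x):=\lim_{t\rr\infty}\phi(t,x)$ is well defined and finite. Since $w_\infty^{\th/n}\geq w_{nt}^{\th/n}$ for each fixed $t$, I get $\liminf_n n w_\infty^{\th/n}(\sqrt n x)\geq\phi(t,x)$, and letting $t\rr\infty$, $\liminf_n n w_\infty^{\th/n}(\sqrt n x)\geq\psi(x)$. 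It then remains to show $\limsup_n n w_\infty^{\th/n}(\sqrt n x)\leq\psi(x)$, uniformly on $[x_0,\infty)$, and I will treat $\th\leq0$ and $\th>0$ separately.

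\textbf{Case $\th\leq0$.} The key observation is that
\[
0\ \leq\ w_\infty^{\th/n}(\sqrt n x)-w_{nt}^{\th/n}(\sqrt n x)\ =\ P_1^{\th/n}\big(M\geq\sqrt n x,\ M_{nt}<\sqrt n x\big)\ \leq\ P_1^{\th/n}(\zeta>nt),
\]
with $\zeta$ the extinction generation, because for $M$ to exceed $\sqrt n x$ after generation $nt$ the process must still be alive then; crucially, this bound is independent of $x$. A standard near-critical Galton--Watson estimate (or a direct analysis of $v_{m+1}=Q^{\th/n}(v_m)$, $v_0=1$, via the Taylor expansion \eqref{Q-\n1} and $\wt\sigma^2(\th/n)\rr\sigma^2$) shows $\sup_{n}n P_1^{\th/n}(\zeta>nt)\rr0$ as $t\rr\infty$ when $\th\leq0$. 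Writing
\[
\big|n w_\infty^{\th/n}(\sqrt n x)-\psi(x)\big|\ \leq\ \big(n w_\infty^{\th/n}(\sqrt n x)-n w_{nt}^{\th/n}(\sqrt n x)\big)+\big|n w_{nt}^{\th/n}(\sqrt n x)-\phi(t,x)\big|+\big(\psi(x)-\phi(t,x)\big),
\]
take $\sup_{x\geq x_0}$ and then $\limsup_n$: the first term is $\leq\sup_n nP_1^{\th/n}(\zeta>nt)$; the second vanishes because $n w_{nt}^{\th/n}(\sqrt n\,\cdot)\rr\phi(t,\cdot)$ uniformly on $[x_0,\infty)$ (uniformity on compacts from Proposition~\ref{prop-conv-sub}, and for $x\geq K$ both quantities are small uniformly in $n$ by Lemma~\ref{tight-n-w}(ii) and Lemma~\ref{lemma-boundary}(iii)); the third, $\sup_{x\geq x_0}(\psi(x)-\phi(t,x))\rr0$ as $t\rr\infty$, by Dini's theorem on compacts together with $\psi(x)\dr0$ as $x\rr\infty$ (for $\th\leq0$ this follows from Lemma~\ref{tight-n-w}(ii)). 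Letting $t\rr\infty$ finishes this case, and the uniformity is automatic since the dominant term is $x$-free.

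\textbf{Case $\th>0$.} Now $n P_1^{\th/n}(\zeta>nt)$ no longer vanishes as $t\rr\infty$ (it tends to the survival mass $2\th/\sigma^2$), and accordingly $\phi(t,\cdot)$ does \emph{not} converge to $\psi$ uniformly on $[x_0,\infty)$ --- $\phi(t,x)\rr0$ as $x\rr\infty$ for fixed $t$, whereas $\psi(x)\rr2\th/\sigma^2$ by \eqref{psi-asymptotic} --- so the sandwich must be replaced. The plan is to pass to the dual: by the duality principle recalled before Lemma~\ref{tight-n-w}, $X$ under $P_1^{\th/n}$ conditioned on extinction is a subcritical branching random walk $X^\dagger$ with step law $F_{RW}$ and offspring generating function $s\mapsto f^{\th/n}(q_n s)/q_n$; write $P^\dagger$ for its law. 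From \eqref{b222}, $q_n=1-\tfrac{2\th}{n\sigma^2}+o(1/n)$, and a short computation gives that $X^\dagger$ has offspring mean $1-\tfrac{\th}{n}+o(1/n)$, variance $\rr\sigma^2$, and bounded third moment. On $\{X\text{ survives}\}$ one has $M=\infty$ a.s.\ (e.g.\ by Theorem~\ref{theorem-speed}(ii), or because the population tends to infinity on survival and its rightmost particle then escapes to $+\infty$). Hence
\[
n w_\infty^{\th/n}(\sqrt n x)\ =\ n P_1^{\th/n}(X\text{ survives})\ +\ P_1^{\th/n}(X\text{ dies})\cdot n P^\dagger\big(M\geq\sqrt n x\big),
\]
where the first term tends to $2\th/\sigma^2$ by \eqref{b222} (equivalently Lemma~\ref{lemma-ext}), uniformly in $x$. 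Since the arguments of Sections~\ref{sec-conv}--\ref{section-pde-lim} and of the previous case use only that the offspring mean is $1+\eta_n$ with $n\eta_n$ convergent, that the variance tends to $\sigma^2$, and that the third moment is bounded, they apply verbatim to $X^\dagger$ with $\th$ replaced by $-\th$; thus $n P^\dagger(M\geq\sqrt n x)\rr\psi^\dagger(x):=\lim_{t\rr\infty}\phi^\dagger(t,x)$ uniformly on $[x_0,\infty)$, where $\phi^\dagger$ is the corresponding limit from Proposition~\ref{prop-conv-sub}. Finally $\psi^\dagger$ solves the stationary equation in \eqref{psi-eq} with $\th$ replaced by $-\th$, and a direct substitution shows $2\th/\sigma^2+\psi^\dagger$ solves the one with parameter $\th$; by uniqueness of solutions to that ODE (an elementary comparison argument along the lines of Proposition~\ref{prop-uniq}, or directly from the explicit formula \eqref{eq:psi_formula}, whose $|\th|$-dependent part is the same for $\th$ and $-\th$) this equals $\psi$. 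Adding the two terms, and using that the first is $x$-free, gives $n w_\infty^{\th/n}(\sqrt n\,\cdot)\rr\psi(\cdot)$ uniformly on $[x_0,\infty)$.

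\textbf{Main obstacle.} The hard part is the $\th>0$ case: a nonvanishing fraction of the mass comes from the surviving populations, which $w_{nt}^{\th/n}$ cannot capture for any fixed $t$, forcing the detour through the dual subcritical system. This detour carries two commitments to be checked carefully: (a) that the tightness and scaling-limit machinery of Sections~\ref{sec-conv}--\ref{section-pde-lim} is insensitive to replacing the offspring mean $1+\th/n$ by $1+\eta_n$ with $n\eta_n\rr-\th$ plus an $o(1/n)$ error --- which it is, since only the stated moment asymptotics ever enter; and (b) the near-critical Galton--Watson survival estimate together with the fact that $M=\infty$ a.s.\ on survival, which jointly isolate and identify the constant $2\th/\sigma^2$. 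Everything else --- monotonicity, the elementary ODE bookkeeping, and the upgrades from pointwise to uniform convergence via Dini and tail bounds --- is routine.
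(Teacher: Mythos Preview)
Your proof is correct and follows the same approach as the paper: reduce $\theta>0$ to $\theta\leq 0$ via duality with the subcritical process, and for $\theta\leq 0$ sandwich $nw_{nt}^{\th/n}\leq nw_\infty^{\th/n}\leq nw_{nt}^{\th/n}+nP_1^{\th/n}(\zeta>nt)=nw_{nt}^{\th/n}+O(1/t)$, then upgrade to uniform convergence via Dini on compacts plus tail bounds. You spell out the $\theta>0$ reduction in considerably more detail than the paper (which dispatches it in a single sentence), in particular checking that the dual offspring law satisfies the moment hypotheses and identifying $\psi^\dagger+2\theta/\sigma^2$ with $\psi$; the paper defers that identification to \eqref{eq:phi_psi} just after the lemma.
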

\begin{proof}
By duality between the supercritical and subcritical branching processes (see discussion before Lemma \ref{tight-n-w}), we need only to show the convergence when~$\theta \leq 0.$ For any $t>0$, we have
\begin{equation}\label{eq:wnt_winf}
\aligned
n w_{nt}^{\th/n}(\sqrt{n} x)
&\leq n w_\infty^{\th/n}(\sqrt{n} x)\\
&\leq n w_{nt}^{\th/n}(\sqrt{n} x) + n P_1^{\th/n}(X \mbox{ survives to generation } [nt])\\
&= n w_{nt}^{\th/n}(\sqrt{n} x) + O(1/t).
\endaligned
\end{equation}
It  follows  that $n w_\infty^{\th/n}(\sqrt{n} x)$ pointwise converges to $\psi(x)$, and by Theorem~\ref{theorem-weak-con} and Proposition \ref{prop-conv-sub}, the limiting function $\psi(\cdot)$ is finite and continuous. Moreover, by Dini's Theorem, the convergence of $\phi(t,x)\to \psi(x)$ is uniform over any compact interval inside $(0,\infty)$. By~\eqref{eq:wnt_winf} again, $n w_\infty^{\th/n}(\sqrt{n} x)$  converges to~$\psi(x)$ uniformly over any compact interval.  Note further that $ n w_{\infty}^{\th/n}(\sqrt{n} x)\leq  n w_{\infty}(\sqrt{n} x)$, which,  by Theorem 1 in \cite{LS15}, is $ O(1/x^2)$, so the  uniform convergence over $[x_0,\infty)$ follows.
\end{proof}

In the following proposition, we describe the limiting function $\psi(\cdot).$

For any $\th > 0$, define
\[
\wt{w_\infty^{\th/n}}(\sqrt{n} x)=P_1^{\th/n}(M_\infty\geq \sqrt{n} x\,|\, \mbox{extinction}).
\]
By duality between the supercritical and subcritical branching processes we have  $(n\wt{w_\infty^{\th/n}}(\sqrt{n} x))$ converges to $(\wt{\psi}(x))$, which,  by (\ref{b222}), satisfies the following  relationship with $(\psi(x))$:
\begin{equation}\label{eq:phi_psi}
  \psi(x) = \wt{\psi}(x) + 2\theta/\sigma^2,\q\mbox{ for all } x >0.
\end{equation}
Recall that $\bar \tau^{Y}_{x}$ was defined in (\ref{tau-B}) for any continuous process $\{Y_{t}\}_{t\geq 0}$ and $x\in \re$. We further define for $x<y \leq \infty$,
\[
 \tau^{Y}_{y}  =\min\{t\geq 0: Y_{t} \geq y\}, \mbox{ and }\bar \tau^{Y}_{x,y}=\bar \tau^{Y}_{x} \wedge  \tau^{Y}_{y}.
\]

\begin{prop}\label{prop:psi}
The limiting function $\psi(\cdot)$ in Lemma \ref{lemma:conv_Minfty} satisfies that
\begin{equation}\label{eq:psi}
\left\{
\aligned
\frac{\sigma^2_R}{2} \frac{\partial^{2}\psi}{\partial x^{2} }&= - \theta  \psi +\frac{\sigma^2}{2}\psi^2 ,  \quad x >0, \\
\lim_{x \to 0+} \psi(x) &= \infty,\\
\lim_{x \to \infty} \psi(x) &=\frac{2\th^+}{\sigma^2}.
\endaligned
\right.
\end{equation}
\end{prop}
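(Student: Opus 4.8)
The plan is to pass to the limit $t\to\infty$ in the Feynman--Kac representation for $\phi(t,x)$ of Proposition~\ref{prop-phi-con}, which should yield a time-independent (elliptic) Feynman--Kac representation for $\psi(x)=\lim_{t\to\infty}\phi(t,x)$; an application of Kac's theorem then produces the ODE in \eqref{eq:psi}, exactly as Corollary~\ref{corollary-pde} produced the PDE \eqref{kpp}. First I would fix $x_0\in(0,x)$ and let $t\to\infty$ in \eqref{eq:phi_FK}. By Lemma~\ref{lemma:conv_Minfty}, $\phi(t,\cdot)\to\psi(\cdot)$ pointwise (indeed locally uniformly), and since $B$ is recurrent, $\bar\tau^{\sigma_R B}_{x_0}<\infty$ a.s.\ so that on the event $\{\bar\tau^{\sigma_R B}_{x_0}\le t\}$ the integrand stabilizes. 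Using the uniform bound $\sup_{n}n w_\infty^{\th/n}(\sqrt n y)\le \psi(y)+o(1)$ (and the $O(1/y^2)$ tail from Theorem~1 of \cite{LS15}), together with the exponential bound in Lemma~\ref{tight-n-w}(ii) to control the contribution of $\{\bar\tau^{\sigma_R B}_{x_0}>t\}$, I would invoke dominated convergence to obtain, for all $x>x_0>0$,
\[
\psi(x)=E_x\!\left(\exp\!\left(\int_0^{\bar\tau^{\sigma_R B}_{x_0}}\!\Big(\th-\tfrac{\sigma^2}{2}\psi(\sigma_R B_s)\Big)\,ds\right)\psi\big(\sigma_R B_{\bar\tau^{\sigma_R B}_{x_0}}\big)\right).
\]
By Kac's theorem (as cited in the proof of Corollary~\ref{corollary-pde}) this representation forces $\psi$ to be a classical solution of $\tfrac{\sigma_R^2}{2}\psi''+\th\psi-\tfrac{\sigma^2}{2}\psi^2=0$ on $(x_0,\infty)$, and letting $x_0\downarrow 0$ gives the ODE on all of $(0,\infty)$.

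For the boundary conditions: the condition $\lim_{x\downarrow 0}\psi(x)=\infty$ follows from Lemma~\ref{lemma-boundary}(ii) since $\psi(x)=\lim_t\phi(t,x)\ge\phi(t,x)$ for every $t$, and the right-hand side can be made arbitrarily large by first sending $n\to\infty$ and then $x\downarrow 0$. For the condition at $\infty$, I would argue that $\psi(x)=\lim_{n}n w_\infty^{\th/n}(\sqrt n x)$ and decompose according to whether $X$ goes extinct. When $\th\le 0$ extinction is certain, $n w_\infty^{\th/n}(\sqrt n x)\le n w_\infty(\sqrt n x)=O(1/x^2)\to 0$ as $x\to\infty$ (Theorem~1 of \cite{LS15}), so $\lim_{x\to\infty}\psi(x)=0=2\th^+/\sigma^2$. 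When $\th>0$, using \eqref{eq:phi_psi}, $\psi(x)=\wt\psi(x)+2\th/\sigma^2$ where $\wt\psi$ is the analogous limit for the process conditioned on extinction; by duality this conditioned process is subcritical, so the same $O(1/x^2)$ estimate applied to it gives $\wt\psi(x)\to 0$, hence $\psi(x)\to 2\th/\sigma^2=2\th^+/\sigma^2$. Alternatively, one can read this off directly: $n w_\infty^{\th/n}(\sqrt n x)\le n P_1^{\th/n}(X\text{ survives})+n P_1^{\th/n}(M\ge\sqrt n x\mid\text{extinction})$, the first term converging to $2\th/\sigma^2$ by \eqref{b222} and the second vanishing as $x\to\infty$.

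The main obstacle I expect is the justification of the limit interchange (dominated convergence) in the Feynman--Kac identity: one must simultaneously control (a) the unboundedness of $\psi$ near $0$, which is handled by starting the exit at level $x_0>0$ so that $\psi(\sigma_R B_{\bar\tau^{\sigma_R B}_{x_0}})=\psi(x_0)<\infty$ is a deterministic finite constant on $\{\bar\tau^{\sigma_R B}_{x_0}\le t\}$; (b) the event $\{\bar\tau^{\sigma_R B}_{x_0}>t\}$, whose probability decays in $t$ fast enough (Lemma~\ref{Lemma-uni2}(i)) to kill the $e^{\th t}$ factor together with the vanishing $nw_\infty^{\th/n}$-type bound of Lemma~\ref{tight-n-w}(ii) along the stopped path; and (c) when $\th>0$, the exponential prefactor $\exp(\int_0^{\bar\tau}\th\,ds)=e^{\th\bar\tau}$, which requires that the negative term $-\tfrac{\sigma^2}{2}\int_0^{\bar\tau}\psi(\sigma_R B_s)\,ds$ in the exponent provides enough damping; here one uses that $\psi$ is bounded \emph{below} on compacts away from $0$ is not quite what is needed, so instead I would note that $\psi\le\wt w$-type upper bounds from Corollary~\ref{corol-u-x} together with integrability of $e^{c\bar\tau^{\sigma_R B}_{x_0}}$ for small $c$ suffice — or, cleanest of all, simply combine the already-proven pointwise convergence $\phi(t,x)\to\psi(x)$ with uniqueness of the ODE \eqref{psi-eq} (Theorem~\ref{theorem-u-inf}), deriving the ODE for $\psi$ by first deriving it for each $\phi(t,\cdot)$ in the stationary limit via monotone convergence of $\partial_t\phi\downarrow 0$. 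Once \eqref{eq:psi} is established, the explicit formula \eqref{eq:psi_formula} and the asymptotics \eqref{psi-asymptotic} follow by solving the ODE, which is routine (it is a first integral of the form $\tfrac{\sigma_R^2}{2}(\psi')^2=-\th\psi^2+\tfrac{\sigma^2}{3}\psi^3+C$ with $C$ fixed by the condition at $\infty$), and is carried out in the proof of Theorem~\ref{theorem-u-inf}.
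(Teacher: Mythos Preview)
Your overall strategy---passing to the limit $t\to\infty$ in the parabolic Feynman--Kac representation \eqref{eq:phi_FK} for $\phi(t,\cdot)$ to obtain an elliptic Feynman--Kac representation for $\psi$---is viable and genuinely different from the paper's route. The paper instead returns to the discrete level: it writes down a separate discrete Feynman--Kac formula for $w_\infty^{\th/n}$ (a modification of Lemma~4.5 in \cite{NZ17}) with \emph{two-sided} exit from $(x_0,y)$, and then sends $n\to\infty$ directly to obtain the elliptic representation \eqref{eq:psi-int} for $\psi$. Your route reuses machinery already built for $\phi$; the paper's route avoids the $t\to\infty$ limit interchange and yields a two-parameter family of identities (varying $y$) that it later exploits.

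There are, however, two genuine gaps. First, the $\theta>0$ case: you correctly flag the obstruction (the factor $e^{\theta\bar\tau}$ is not integrable), but your proposed fixes are hand-wavy, and the fallback of invoking ``uniqueness of the ODE \eqref{psi-eq} (Theorem~\ref{theorem-u-inf})'' is circular, since Theorem~\ref{theorem-u-inf} is built on Proposition~\ref{prop:psi}. The paper sidesteps this cleanly by observing, via \eqref{eq:phi_psi}, that it suffices to establish the ODE when $\theta\le 0$ (a direct substitution shows that if $\wt\psi$ solves $\frac{\sigma_R^2}{2}\wt\psi''=\theta\wt\psi+\frac{\sigma^2}{2}\wt\psi^2$ then $\psi=\wt\psi+2\theta/\sigma^2$ solves \eqref{eq:psi}). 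You already use \eqref{eq:phi_psi} for the boundary condition at infinity; you should use it here too, after which your dominated-convergence argument goes through trivially since the exponent is nonpositive.

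Second, and more substantively, you do not address the regularity of $\psi$ needed to pass from the Feynman--Kac representation to the ODE. The references cited in Corollary~\ref{corollary-pde} treat the parabolic case; for the elliptic problem the paper appeals to Theorem~3.1 in Chapter~6 of \cite{Dynkin02}, which requires $\psi$ to be Lipschitz. This is \emph{not} immediate from Proposition~\ref{prop-conv-sub} or Lemma~\ref{lemma-spat-reg}, whose Lipschitz constants depend on $T$. The paper devotes the main body of its proof to establishing Lipschitz continuity: taking $y=\infty$ in \eqref{eq:psi-int} and using the strong Markov property gives
\[
\psi(x+\delta)=\psi(x)\,E_{x+\delta}\!\left[\exp\!\left(\theta\,\bar\tau_x^{\sigma_R B}-\tfrac{\sigma^2}{2}\int_0^{\bar\tau_x^{\sigma_R B}}\psi(\sigma_R B_s)\,ds\right)\right],
\]
from which monotonicity and a Lipschitz bound follow by a direct Laplace-transform estimate. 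Your one-sided representation is exactly this $y=\infty$ case, so you could run the same argument---but you need to say so rather than invoking ``Kac's theorem'' as a black box.
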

\begin{proof}
{The fact that} $\psi(\cdot)$ satisfies the boundary conditions in \eqref{eq:psi} follows from \eqref{eq:wnt_winf}, \eqref{eq:phi_psi}, Lemma \ref{lemma-boundary} and Theorem 1 in \cite{LS15}.
It remains to show that $\psi(\cdot)$ satisfies the ODE in \eqref{eq:psi}.
By~\eqref{eq:phi_psi} again, it suffices to show the case when $\th\leq0$.

We only give the sketch of proof because it uses similar techniques to proving the convergence of $(n w_{nt}^{\th/n}(\sqrt{n} x))$. A simple modification of the proof of  Lemma~4.5 in \cite{NZ17} yields the following discrete Feynman-Kac formula for~$w_\infty^{\th/n}(x)$: for all $0<x_0< {x}<y\leq \infty$,
\begin{equation*}\label{eq:FK_winfty}
w_\infty^{\th/n}(x)=E_{x}^{\th/n}\Big((1+\th/n)^{\bar \tau_{x_0,y}} w_\infty^{\th/n}(\W_{\bar \tau_{x_0,y}})\prod_{j=1}^{\bar \tau_{x_0,y}  }\big[1-H^{\th/n}( w_\infty^{\th/n}\big(\W_{j})\big)\big] \Big).
\end{equation*}
It follows by a similar argument to the proofs of Proposition \ref{prop-phi-con} and Corollary~\ref{corollary-pde} that the limiting function $\psi(\cdot)$ satisfies that  for all $0<x_0<y\leq \infty$,
\begin{equation}\label{eq:psi-int}
\psi(x) =  E_{x }\left(\exp\left(\th \bar \tau_{x_{0},y}^{{\sigma_R}B}-\frac{ \sigma^{2}}{2} \int_{0}^{\bar \tau_{x_{0},y}^{{\sigma_R}B}}\psi({\sigma_R}B_{s})ds\right)\cdot \psi(\sigma_R B_{\bar \tau_{x_{0},y}^{{\sigma_R}B}})\right).
\end{equation}
Based on this expression, we want to show that $\psi(\cdot)$ satisfies
\[
\frac{\sigma^2_R}{2} \frac{\partial^{2}\psi}{\partial x^{2} }= - \theta  \psi +\frac{\sigma^2}{2}\psi^2 ,  \quad x_0< x< \infty.
\]
By Theorem 3.1 in {Chapter 6} of \cite{Dynkin02}, we only need to verify that $\psi(\cdot)$ is Lipschitz. To do so, we take $y=\infty$ in \eqref{eq:psi-int}, which yields
\[
\psi(x) = \psi(x_0) E_{x }\left(\exp\left(\th \bar \tau_{x_{0}}^{{\sigma_R}B}-\frac{ \sigma^{2}}{2} \int_{0}^{\bar \tau_{x_{0}}^{{\sigma_R}B}}\psi({\sigma_R}B_{s})ds\right)\right).
\]
By the strong Markov property, for any $\delta\geq 0$ and $x> x_0,$
\[
\aligned
\psi(x+\delta)
&=\psi(x_0) E_{x+\dl }\left(E_{x+\dl }\left(\exp\left(\left.\th \bar \tau_{x_{0}}^{{\sigma_R}B}-\frac{ \sigma^{2}}{2} \int_{0}^{\bar \tau_{x_{0}}^{{\sigma_R}B}}\psi({\sigma_R}B_{s})ds\right)\right| \mathcal{F}_{\bar \tau_{x}^{{\sigma_R}B}}\right)\right)\\
&=\psi(x) E_{x+\delta}\left(\exp\left(\th \bar \tau_{x}^{{\sigma_R}B}-\frac{ \sigma^{2}}{2} \int_{0}^{\bar \tau_{x}^{{\sigma_R}B}}\psi({\sigma_R}B_{s})ds\right)\right).
\endaligned
\]
It follows that $\psi(x)$ is decreasing in $x$, and we have
\[
\begin{aligned}
0&\leq \psi(x) - \psi(x+\delta)\\
 &=\psi(x) \left( 1 -  E_{x+\delta}\left(\exp\left(\th \bar \tau_{x}^{{\sigma_R}B}-\frac{ \sigma^{2}}{2} \int_{0}^{\bar \tau_{x}^{{\sigma_R}B}}\psi({\sigma_R}B_{s})ds\right)\right) \right)\\
 &\leq \psi(x) \left( 1 - E_{x+\delta}\left(\exp\left((\th - \frac{ \sigma^{2}}{2} \psi(x)) \bar \tau_{x}^{{\sigma_R}B}\right)\right)\right)\\
 &=\psi(x) \left( 1 - \exp\left(-\sqrt{-2\th + \sigma^{2}\psi(x)}\cdot \delta/\sigma_R \right)\right)\\
  &=\psi(x)\sqrt{-2\th + \sigma^2 \psi(x)}\cdot O(\delta).
\end{aligned}
\]
Therefore, $\psi(\cdot)$ is Lipschitz and we complete the proof.

\end{proof}

In the rest of this section we assume  that $\th > 0$.

We are interested in the asymptotic behavior of $\psi(x)$ as $x\to \infty$. By \eqref{eq:phi_psi}, it suffices to study the asymptotic behavior of $\wt{\psi}(x)$.
For notational ease, denote
\begin{equation}\label{eq:a_b}
a = \frac{2 \theta }{\sigma_R^2} \q\mbox{and}\q b = \frac{\sigma^2}{\sigma_R^2}.
\end{equation}
Then $\wt{\psi}(\cdot)$ satisfies
\begin{equation}\label{eq:Minfty_sub}
\left\{
\aligned
\frac{\partial^{2}\wt{\psi}}{\partial x^{2} } &=  a\wt{\psi} + b\wt{\psi}^2.\\
\lim_{x \to 0+} \wt{\psi}(x) &= \infty,\\
\lim_{x \to \infty} \wt{\psi}(x) &=0.
\endaligned
\right.
\end{equation}
In the following lemma, we establish uniqueness of solutions to (\ref{eq:Minfty_sub}) (note that the uniqueness does not {follow} from Theorem 3.1 in \cite{Dynkin02}, which applies to the case with bounded domain and given boundary condition).
\begin{lemma}\label{lemma:uniq_psi}
For any $a> 0$ and $ b >0$, there exists at most one solution to~\eqref{eq:Minfty_sub}.
\end{lemma}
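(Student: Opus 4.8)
The plan is to reduce the second-order boundary value problem \eqref{eq:Minfty_sub} to a first-order autonomous equation via the energy first integral, and then to use that a first-order autonomous ODE has solutions that are unique up to translation of the argument, the translation being fixed by the blow-up condition $\lim_{x\to0+}\wt{\psi}(x)=\infty$. So let $\wt{\psi}$ be any $C^2$ solution of \eqref{eq:Minfty_sub}.

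First I would record the first integral: multiplying $\wt{\psi}''=a\wt{\psi}+b\wt{\psi}^2$ by $\wt{\psi}'$ and integrating shows that
\[
\frac12(\wt{\psi}')^2-\frac{a}{2}\wt{\psi}^2-\frac{b}{3}\wt{\psi}^3=:E
\]
is constant on $(0,\infty)$. To see that $E=0$, note first that $\wt{\psi}(x)\to0$ as $x\to\infty$, hence $\wt{\psi}''=a\wt{\psi}+b\wt{\psi}^2\to0$; combining this with the boundedness of $\wt{\psi}$ near infinity through the elementary bound $|\wt{\psi}'(x)|\le|\wt{\psi}(x+1)-\wt{\psi}(x)|+\frac12\sup_{[x,x+1]}|\wt{\psi}''|$ (Taylor's formula with integral remainder, $h=1$) gives $\wt{\psi}'(x)\to0$ as $x\to\infty$. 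Letting $x\to\infty$ in the first integral then yields $E=0$, i.e.
\[
(\wt{\psi}')^2=\wt{\psi}^2\Bigl(a+\frac{2b}{3}\wt{\psi}\Bigr)\qquad\text{on }(0,\infty).
\]

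Next I would show $\wt{\psi}>0$ and $\wt{\psi}'<0$ on all of $(0,\infty)$. If $\wt{\psi}$ vanished at some finite $x_0>0$, the last display would force $\wt{\psi}'(x_0)=0$ as well; since $v\mapsto av+bv^2$ is locally Lipschitz, uniqueness for the initial value problem would give $\wt{\psi}\equiv0$, contradicting $\wt{\psi}(0+)=\infty$. As $\wt{\psi}(0+)=+\infty$, continuity then forces $\wt{\psi}>0$ throughout $(0,\infty)$; consequently $a+\frac{2b}{3}\wt{\psi}>0$, so $\wt{\psi}'$ is nowhere zero and has constant sign, and it must be negative, since a positive sign would make $\wt{\psi}$ increasing, incompatible with $\wt{\psi}(0+)=\infty$ and $\wt{\psi}(\infty)=0$. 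Hence $\wt{\psi}'=-\wt{\psi}\sqrt{a+\frac{2b}{3}\wt{\psi}}$ on $(0,\infty)$.

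Finally, separation of variables pins down $\wt{\psi}$. Setting $G(v)=\int_v^\infty \frac{du}{u\sqrt{a+2bu/3}}$ for $v>0$, the integrand is comparable to $u^{-3/2}$ as $u\to\infty$ and to $(a^{1/2}u)^{-1}$ as $u\to0+$, so $G$ is a strictly decreasing bijection of $(0,\infty)$ onto $(0,\infty)$; moreover $G'(v)=-\bigl(v\sqrt{a+2bv/3}\bigr)^{-1}$, so the relation $\wt{\psi}'=-\wt{\psi}\sqrt{a+2b\wt{\psi}/3}$ gives $\frac{d}{dx}G(\wt{\psi}(x))=1$, hence $G(\wt{\psi}(x))=x+c$ for a constant $c$. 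Letting $x\to0+$, the left side tends to $G(+\infty)=0$ while the right side tends to $c$; thus $c=0$ and $\wt{\psi}=G^{-1}$ on $(0,\infty)$, which is uniquely determined. I expect the main obstacle to be the soft step $\wt{\psi}'(x)\to0$ as $x\to\infty$ (needed so that $E=0$), together with the sign bookkeeping ruling out that $\wt{\psi}$ touches zero; once those are settled the remaining computation is the routine integration of a separable first-order equation. (Equivalently, one may skip $G$ and argue that, the right-hand side of $\wt{\psi}'=-\wt{\psi}\sqrt{a+2b\wt{\psi}/3}$ being locally Lipschitz on $(0,\infty)$, any two solutions of \eqref{eq:Minfty_sub} are translates $\wt{\psi}_1(\cdot)=\wt{\psi}_2(\cdot+s)$ of one another, and a nonzero shift $s$ would, by monotonicity of $\wt{\psi}_2$, break one of the two boundary conditions, forcing $s=0$.)
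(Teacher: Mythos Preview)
Your proof is correct and takes a genuinely different route from the paper. The paper argues by contradiction via a scaling--comparison argument in the spirit of its Proposition~\ref{prop-uniq}: assuming two solutions $\wt{\psi}_1<\wt{\psi}_2$ at some point, it rescales $\wt{\psi}_2$ to $\eta(x)=\wt{\psi}_2(cx+c-1)$ for $c>1$ close to $1$, so that $\eta$ is finite at $x=0$, and then shows that the difference $f=\wt{\psi}_1-\eta$ cannot attain a negative interior minimum because the ODEs force $f''<0$ there. Your approach instead exploits the autonomous structure: you compute the first integral, use the boundary data at infinity to fix $E=0$ and reduce to the separable first-order equation $\wt{\psi}'=-\wt{\psi}\sqrt{a+\tfrac{2b}{3}\wt{\psi}}$, and then the blow-up at $x=0$ pins the translation constant. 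What you gain is a shorter, self-contained argument that simultaneously yields the explicit formula for $\wt{\psi}$ (which the paper derives separately, by essentially the same integration, in the proof of Theorem~\ref{theorem-u-inf}); in effect you merge the uniqueness lemma and the explicit solution into one step. What the paper's argument buys is robustness: the maximum-principle comparison does not rely on the existence of a first integral and would extend to non-autonomous or higher-dimensional analogues where your reduction is unavailable.
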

\begin{proof}
The proof is similar to that of Proposition \ref{prop-uniq}. Suppose that there exist two solutions $\wt{\psi}_i,\ i = 1, 2,$ to \eqref{eq:Minfty_sub}, and suppose that $\wt{\psi}_1(x_0) < \wt{\psi}_2(x_0)$ for some $x_0>0$. Take $c>1$ close enough to $1$ so that
\[
\wt{\psi}_1(x_0) < \wt{\psi}_2(c x_0 + c-1).
\]
Define $\eta(x) = \wt{\psi}_2(c x + c-1)$ for all $x>0$, which satisfies that
\begin{equation}\label{eq:eta}
\left\{
\aligned
\frac{\partial^{2}\eta }{\partial x^{2} } &=  a c^2 \eta + bc^2\eta^2.\\
\lim_{x \to 0+} \eta(x) &< \infty,\\
\lim_{x \to \infty} \eta(x) &=0.
\endaligned
\right.
\end{equation}
Let $f(x) = \wt{\psi}_1(x) - \eta(x)$ for $x>0$. Then $f(\cdot)$ must attain its minimum at some point $x^*\in(0,\infty)$, and we have
\[
 f(x^*)=\wt{\psi}_1(x^*) - \eta(x^*) <0,\q\mbox{and}\q \frac{\partial^{2} f }{\partial x^{2} } (x^*) \geq 0.
\]
However, by \eqref{eq:Minfty_sub} and \eqref{eq:eta},
\[
\frac{\partial^{2} f }{\partial x^{2} } (x^*) = a\wt{\psi}_1(x^*) -  a c^2 \eta(x^*) +  b\wt{\psi}_1^2(x^*) -  bc^2\eta^2(x^*) < 0,
\]
which is a contradiction.
\end{proof}

We are now ready to prove Theorem \ref{theorem-u-inf}.

\begin{proof}[Proof of  Theorem \ref{theorem-u-inf}.]
The convergence \eqref{eq:conv_M}  and that $\psi(\cdot)$ satisfies \eqref{psi-eq} follow from Lemma~\ref{lemma:conv_Minfty}, Proposition \ref{prop:psi} and Lemma \ref{lemma:uniq_psi}. It remains to show \eqref{eq:psi_formula} and \eqref{psi-asymptotic}.
By \eqref{eq:phi_psi}, it suffices to study $\wt{\psi}(\cdot),$ which satisfies equation \eqref{eq:Minfty_sub}.

Note that \eqref{eq:Minfty_sub} is a second-order autonomous ODE, and it admits the following positive solution given by an implicit function:
\begin{equation} \label{eq:impl}
-\int \frac{1}{\sqrt{a \wt{\psi}^2 + (2b/3)\ \wt{\psi}^3+C_1}}\, d \wt{\psi} = x + C_2,
\end{equation}
where $C_1$ and $C_2$ are two constants.
Letting $x\to \infty$ and noting that\\ $\lim_{x \to \infty} \wt{\psi}(x) = 0$, we conclude  that $C_1=0.$
When $C_1=0$, the left hand side of \eqref{eq:impl} can be explicitly integrated out, and we obtain
\[
\frac{2}{\sqrt{a}}\arcoth\left(\sqrt{2b/(3a)  \wt{\psi} + 1}\,\right) =  x+ C_2,
\]
where for any $s> 1$,
\[
  \arcoth(s) = \frac{1}{2}\log((s+1)/(s-1)).
\]
Letting $x\to 0 +$ and noting that $\lim_{x \to 0+} \wt{\psi}(x) = \infty$, we see  that $C_2=0$ and so
\begin{equation}\label{eq:explicit_soln}
\frac{2}{\sqrt{a}}\arcoth\left(\sqrt{2b/(3a)  \wt{\psi} + 1}\,\right) =  x.
\end{equation}
Therefore,
$$
\wt \psi(x) = \frac{3a}{2b} \left( \coth^2\left(\frac{\sqrt{a}x}{2}\right)-1 \right),
$$
and so
\[
\wt{\psi}(x) \sim \frac{6a}{b} \exp(-\sqrt{a}\, x),\q\mbox{as } x\to \infty.
\]
Plugging $a$ and $b$ in \eqref{eq:a_b} yields
\begin{equation*}
  \wt{\psi}(x) \sim \frac{12 \theta}{\sigma^2} \exp\left(-\sqrt{\frac{2 \theta }{\sigma_R^2}}\, x\right),\q\mbox{as } x\to \infty.
\end{equation*}
The conclusions \eqref{eq:psi_formula} and \eqref{psi-asymptotic} follow.
\end{proof}

\bigskip

\section*{Acknowledgements}
We are very grateful to an anonymous referee for  careful reading of the manuscript,
and for a number of useful comments and suggestions that significantly improved this paper.
\\
\\
\textbf{Availability of data and material.} Not applicable.
\medskip \\
\textbf{ Compliance with ethical standards.} The authors have no conflicts of interest to declare that are relevant to the content of this article.
\medskip \\
\textbf{Code availability.} Not applicable.

\bibliographystyle{plain}
\printindex

\bigskip
\noindent Eyal Neuman: Department of Mathematics, Imperial College London, London, SW7 1NE, U.K. e.neumann@imperial.ac.uk

\medskip
\noindent Xinghua Zheng: Department of Information Systems
 Business Statistics and Operations Management, Hong Kong University of Science and
Technology, Clear Water Bay, Kowloon, Hong Kong. xhzheng@ust.hk

\end{document}